\newcommand{\losemi}{{\otimes \kern -.78em \ltimes}}
\newcommand{\rosemi}{{\otimes \kern -.78em \rtimes}}
\newcommand{\Hom}{\ensuremath{\operatorname{Hom}}}
\newcommand{\Ext}{\operatorname{Ext}}
\newcommand{\ind}{\operatorname{ind}}
\newcommand{\rad}{\operatorname{rad}}
\renewcommand{\a}{\alpha}
\newcommand{\ga}{\gamma}
\newcommand{\s}{\sigma}
\newcommand{\la}{\lambda}
\newcommand{\St}{\operatorname{St}}
\newcommand{\qq}{\widehat{Q}_{1}}
\newcommand{\soc}{\operatorname{soc}}
\newcommand{\pp}{^{\oplus 2}}
\newcommand{\ppp}{^{\oplus 3}}
\newcommand{\leqnomode}{\tagsleft@true}
\newcommand{\reqnomode}{\tagsleft@false}
\newtheorem{theorem}{Theorem}[subsection]
\let\c@fact\c@theorem\makeatother
\let\c@note\c@theorem\makeatother
\newtheorem{lemma}{Lemma}[subsection]
\let\c@lemma\c@theorem\makeatother
\let\c@lemma\c@theorem\makeatother
\let\c@alg\c@theorem\makeatother
\newtheorem{prop}{Proposition}[subsection]
\let\c@prop\c@theorem\makeatother
\newtheorem{conj}{Conjecture}[subsection]
\let\c@conj\c@theorem\makeatother
\newtheorem{cor}{Corollary}[subsection]
\let\c@cor\c@theorem\makeatother
\let\c@defn\c@theorem\makeatother
\theoremstyle{definition}
\newtheorem{remark}{Remark}[subsection]
\let\c@remark\c@theorem\makeatother
\let\c@example\c@theorem\makeatother
\numberwithin{equation}{subsection}
\crefname{theorem}{Theorem}{Theorems}
\crefname{fact}{Fact}{Facts}
\crefname{note}{Note}{Notes}
\crefname{lemma}{Lemma}{Lemmas}
\crefname{alg}{Algorithm}{Algorithms}
\crefname{remark}{Remark}{Remarks}
\crefname{example}{Example}{Examples}
\crefname{prop}{Proposition}{Propositions}
\crefname{conj}{Conjecture}{Conjectures}
\crefname{cor}{Corollary}{Corollaries}
\crefname{defn}{Definition}{Definitions}
\crefname{equation}{\!\!}{\!\!} 
\newcounter{listequation}
\subjclass[2020]{Primary: 20G, 20J}
\begin{document}

\title{On Donkin's Tilting Module Conjecture I: Lowering the prime }

\dedicatory{In memory of James E. Humphreys}

\begin{abstract} In this paper the authors provide a complete answer to  Donkin's Tilting Module Conjecture for all rank $2$ semisimple algebraic 
groups and $\text{SL}_{4}(k)$ where $k$ is an algebraically closed field of characteristic $p>0$. In the process, new techniques are introduced 
involving the existence of $(p,r)$-filtrations, Lusztig's character formula, and the $G_{r}$T-radical series for baby Verma modules. 
\end{abstract}

\author{\sc Christopher P. Bendel}
\address
{Department of Mathematics, Statistics and Computer Science\\
University of
Wisconsin-Stout \\
Menomonie\\ WI~54751, USA}
\thanks{Research of the first author was supported in part by Simons Foundation Collaboration Grant 317062}
\email{bendelc@uwstout.edu}

\author{\sc Daniel K. Nakano}
\address
{Department of Mathematics\\ University of Georgia \\
Athens\\ GA~30602, USA}
\thanks{Research of the second author was supported in part by
NSF grants DMS-1701768 and DMS-2101941}
\email{nakano@math.uga.edu}

\author{\sc Cornelius Pillen}
\address{Department of Mathematics and Statistics \\ University
of South
Alabama\\
Mobile\\ AL~36688, USA}
\thanks{Research of the third author was supported in part by Simons Foundation Collaboration Grant 245236}
\email{pillen@southalabama.edu}

\author{Paul Sobaje}
\address{Department of Mathematical Sciences \\
          Georgia Southern University\\
          Statesboro, GA~30458, USA}
\email{psobaje@georgiasouthern.edu}

\maketitle

\section{Introduction}
		
\subsection{} In 1990, Donkin at an MSRI conference stated a series of conjectures about reductive algebraic group representations in characteristic $p>0$. One of the 
conjectures known as the Tilting Module Conjecture (TMC) (see Conjecture~\ref{C:tilting}) states that a projective indecomposable module for the Frobenius kernel $G_{r}$ of a semisimple  group $G$ can be realized as the restriction of a specific tilting module for $G$. A solution to this conjecture implies a positive answer to the Humphreys-Verma Conjecture  (cf. 
\cite[10.4 Question]{Hum}) that such a projective indecomposable $G_r$-module admits a $G$-structure. 

There have been numerous attempts over the past 30 years to prove the Humphreys-Verma Conjecture and later the Tilting Module Conjecture. Ballard \cite{B} first proved the Humphreys-Verma Conjecture for $p\geq 3h-3$ and Jantzen \cite{J} lowered this bound to $p\geq 2h-2$, where $h$ is the Coxeter number associated to the root system $\Phi$ for $G$. Jantzen's general lower bound has long stood as the sharpest known lower bound, even though it was suspected that the Tilting Module Conjecture should hold for all primes. In an unexpected breakthrough in 2019, the authors of this paper discovered the first counterexample to the TMC when the group has root system $\Phi$ of type $\rm{G}_{2}$ and $p=2$. 

Many people have tried unsuccessfully to lower the bound on $p$ and the only other cases that the TMC was known to hold for all primes are when $\Phi=\rm{A}_{1}$ or $\rm{A}_{2}$. 
The goal of this paper is to introduce  new ideas and techniques that enable us to verify the TMC in many other cases. These ideas originate from the fundamental work of 
Kildetoft-Nakano \cite{KN} and Sobaje \cite{So} that relates the TMC to the existence of good $(p,r)$-filtrations. As a byproduct of our work, we provide a new proof of Jantzen's lower bound. Furthermore, we provide a complete answer for all rank $2$ groups and $\Phi=\rm{A}_{3}$ for all primes.

\begin{theorem}\label{T:MainTheorem} Let $G$ be a simple, simply connected algebraic group scheme defined and split over ${\mathbb F}_{p}$ and $\Phi$ be its associated root system. The Tilting Module Conjecture 
holds if 
\begin{itemize} 
\item[(a)] $\Phi=\rm{A}_{n}$, $n\leq 3$;
\item[(b)] $\Phi=\rm{B}_{2}$;
\item[(c)] $\Phi=\rm{G}_{2}$ as long as $p\neq 2$.
\end{itemize} 
\end{theorem}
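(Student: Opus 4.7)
The overall strategy is to exploit the equivalence, established in the work of Kildetoft--Nakano and Sobaje, between the TMC for $G_r$ and the existence of good $(p,r)$-filtrations on certain induced modules $\nabla(\lambda)$. The first step is to make this reduction explicit and to identify, for each root system and prime covered by the theorem, the minimal finite family of restricted dominant weights $\lambda$ for which the filtration must be verified; the Steinberg tensor product theorem together with the $p$-alcove structure cuts this family down to the restricted weights in the principal block of $G_1T$.

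As a warm-up and as a template, I would reprove Jantzen's bound $p \geq 2h-2$ by analyzing the $G_rT$-radical series of baby Verma modules $\widehat{Z}_r(\lambda)$ in tandem with Lusztig's character formula (which is unconditional in this range). This new proof bypasses the cohomological arguments of Ballard and Jantzen and instead reads the desired $(p,r)$-filtration directly off the radical layers, providing the basic mechanism that will be refined for small primes.

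The third and most substantial step is the case-by-case treatment of the small primes not covered by Jantzen's bound: $p \in \{2,3,5\}$ for $\Phi \in \{\mathrm{A}_3, \mathrm{B}_2\}$ and $p \in \{3,5,7\}$ for $\Phi = \mathrm{G}_2$ (the case $p = 2$ being the known counterexample). For each such pair $(G,p)$, the characters of the restricted simples are available explicitly, so one can compute the radical structure of each relevant baby Verma module concretely and then either construct a good $(p,r)$-filtration on the corresponding $\nabla(\lambda)$ or identify a precise obstruction. For $\mathrm{A}_3$ the rank-$2$ results enter through the $\mathrm{A}_2$ Levi via parabolic induction.

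The main obstacle will be the $\mathrm{G}_2$ case at $p = 3$, which lies immediately above the $p = 2$ counterexample and where the alcove geometry is most delicate. Here the radical series of the baby Verma modules contains composition factors that threaten to obstruct a naive construction, so the argument must invoke fine $\mathrm{Ext}^1$-vanishing results between restricted simple modules tailored to the $\mathrm{G}_2$ root geometry. A secondary difficulty lies in propagating the rank-$2$ filtrations through parabolic induction from $\mathrm{A}_2$ to $\mathrm{A}_3$ without degrading the prime, which will require the Steinberg tensor product decomposition to be compatible with the $(p,r)$-filtration layer by layer.
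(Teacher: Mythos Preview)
Your overall framing via good $(p,r)$-filtrations is correct and matches the paper's strategy through Theorem~\ref{T:summary}. However, several key steps are misidentified or would not go through as stated.

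First, your proposed reproof of Jantzen's bound via the $G_rT$-radical series and Lusztig's character formula has a gap: Lusztig's formula is \emph{not} known unconditionally for $p \geq 2h-2$; Williamson's counterexamples show the bound where it holds is in general much larger than linear in $h$. The paper's new proof of the $p \geq 2h-2$ case (Section~\ref{S:weightThms}) avoids Lusztig entirely, using instead an elementary weight argument in a truncated category: one shows $\St_r \otimes L(\lambda)$ is injective and projective in $\textup{Mod}((p^r-1)\rho+\lambda)$ by bounding $\langle \eta, \alpha_0^\vee \rangle$ for weights $\eta$ with $\Ext^1_G(k,L(\eta)) \neq 0$.

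Second, you have the difficulty calibration inverted. The hardest case is $\mathrm{G}_2$, $p=7$, not $p=3$. For $p=3$ the paper exploits the special isogeny $\sigma$ with $\sigma^2 = F$ (the ``half-Frobenius''): one lifts the PIMs for the exotic kernel $G_{1/2}$ to tilting modules and then tensors twists to recover $\widehat{Q}_1$. No delicate $\Ext^1$ analysis is needed. By contrast, $p=7$ requires the full $G_1T$-radical series of each baby Verma $\widehat{Z}_1'(\hat\lambda)$, computed via the Andersen--Kaneda theorem through inverse Kazhdan--Lusztig polynomials (valid here since rank $2$ forces condition $(\widehat{D})$), followed by a case analysis ruling out cancellations in Theorem~\ref{T:induction}. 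Your proposal mentions radical series as a general template but does not isolate this machinery or the KL input, which is the genuine technical core.

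Third, the $\mathrm{A}_3$ cases are not handled by parabolic induction from an $\mathrm{A}_2$ Levi; they are done directly. For $p=2,3$ one uses weight bounds on $\St_1$ (Proposition~\ref{P:weights}) combined with Theorem~\ref{T:induction}, and for $p=3$ an explicit Jantzen-filtration computation on $\Delta(2,3,3)$. For $p=5=2h-3$ the paper uses the general $p=2h-3$ analysis (Theorem~\ref{T:2h-31}) plus a separate argument for one boundary weight.
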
 

The most difficult case to verify in Theorem~\ref{T:MainTheorem} is when $\Phi=\rm{G}_{2}$, $p=7$. The verification of the TMC in this case employs several deep results which include the validity of Lusztig's Conjecture and the description of the $G_{r}T$-radical series of baby Verma modules 
(provided in the Appendix) via the computation of inverse Kazhdan-Lusztig polynomials. 

The authors have recently constructed examples for $\Phi=\rm{B}_{3}$ ($p=2$) and $\Phi=\rm{C}_{3}$ ($p=3$) where the TMC fails, so the case when $G$ has rank $3$ is much more complicated. These counterexamples along with others will appear in a forthcoming paper \cite{BNPS3}. With new insights, techniques and examples, we aim to be able to determine all primes for which the TMC holds.

The work in this paper also has application to character formulas for $G$-modules.  In \cite{RW2}, Riche and Williamson proved for all $p$ that the characters of indecomposable tilting modules can be given via $p$-Kazhdan-Lusztig polynomials, extending the results from \cite{RW} and \cite{AMRW}.  In \cite{So2}, it was shown that one can always choose an $r \ge 1$ large enough (depending on $G$) such that the characters of the simple $G$-modules can be derived from the characters of the tilting modules with highest weights in the set $(p^r-1)\rho+X_1$, where $\rho$ denotes the Weyl weight and $X_1$ denotes the set of $p$-restricted dominant weights.  The bound on $r$ is in general not optimal, which is a drawback from a computational perspective given that the complexity in finding tilting characters grows substantially as the highest weight increases. The TMC is a key statement because 
the optimal bound of $r=1$ is achieved precisely when Donkin's Tilting Module Conjecture is valid. 

\subsection{} The paper is organized as follows. In Section~\ref{S:filt-trans}, we provide the notation that will be used throughout the paper and formally state the Tilting Module Conjecture. We then prove a result about the existence of good $(p,r)$-filtrations for induced modules and how methods involving translation functors reduced our consideration of the TMC to regular weights (when $p \ge h$). 

In the following section (Section \ref{S:weightThms}), we start by presenting a general theorem involving weight combinatorics and root pairings that can be employed to verify the TMC. Later, in this section, it is shown how this  theorem can be used to recover Jantzen's lower bound by proving the TMC for $p\geq 2h-2$. In general, the conditions in the theorem break down for smaller primes, but due to some fortunate circumstances, this approach can be later used to verify the TMC for $\Phi = \rm{B}_2$ with $p = 3$ and $\Phi=\rm{G}_2$ with $p=5$. At the end of the section, the case when $p=2h-3$ is analyzed and results are provided for when the TMC holds. 

Section 4 is devoted to investigating the intriguing connection between splitting of maps, the existence of good $(p,r)$-filtrations, and the validity of the TMC. This analysis culminates in Theorem \ref{T:summary} which gives sufficient representation-theoretic conditions on a finite set of modules for the TMC to hold. The main theorem of the paper (Theorem~\ref{T:MainTheorem}) is proved in 
Sections 5 through 8. It is important to note that one single argument cannot handle all these cases and we appeal to a myriad of old and new techniques to treat each case separately. 

\subsection{Acknowledgements} We thank David Stewart and the referees for comments and suggestions on an earlier version of our manuscript.


\section{Filtrations and Translation}\label{S:filt-trans}

\subsection{\bf Notation.}\label{S:Notation} The notation for the most part follows the standard conventions in \cite{rags}.\footnote{The notation for the induced and Weyl modules 
is provided via the costandard and standard module conventions in the highest weight category literature.} Let $G$ be a connected semisimple algebraic group scheme 
defined and split over ${\mathbb F}_{p}$ with Frobenius morphism $F$. The $r$th Frobenius kernel will be denoted by $G_{r}$, and its graded version by $G_{r}T$.  Given a split maximal torus $T$, let $X$ be the set of weights 
for $G$, $X_{+}$ be the dominant weights for $G$, and $X_{r}$ be the $p^{r}$-restricted weights. For $\lambda\in X_{+}$, there are four fundamental families of $G$-modules (each having highest weight $\lambda$): 
$L(\lambda)$ (simple), $\nabla(\lambda)$ (costandard/induced), $\Delta(\lambda)$ (standard/Weyl), and $T(\lambda)$ (indecomposable tilting).

Let $\tau:G \rightarrow G$ be the Chevalley antiautomorphism of $G$ that is the identity morphism when restricted to $T$ (see \cite[II.1.16]{rags}).  Given a finite dimensional $G$-module $M$ (over a field $k$ of characteristic $p$), the module $^{\tau}M$ is $M^*$ (the ordinary $k$-linear dual of $M$) as a $k$-vector space, with action $g.f(m)=f(\tau(g).m)$.  This defines a functor from $G$-mod to $G$-mod that preserves the character of $M$.  In particular, it is the identity functor on all simple and tilting modules.

For each $\mu \in X$ and positive integer $r$, there is a {\em baby Verma module}:
$$\widehat{Z}_r^{\prime}(\mu) := \text{ind}_B^{G_rB} \mu.$$

Let $\rho$ be the sum of the fundamental weights and $\text{St}_r = L((p^r-1)\rho)$ be the $r$th Steinberg module. For $\lambda\in X_{r}$, let $Q_{r}(\lambda)$ denote the $G_{r}$-projective cover (equivalently, injective hull) of $L(\lambda)$ as a $G_{r}$-module. For $\lambda\in X$, if $\widehat{L}_{r}(\lambda)$ is the corresponding simple $G_{r}T$-module, let $\widehat{Q}_{r}(\lambda)$ denote the $G_{r}T$-projective cover (equivalently, injective hull) of $\widehat{L}_{r}(\lambda)$. 
If $\lambda\in X_{r}$, set 
$$\hat{\lambda}=2(p^{r}-1)\rho+w_{0}\lambda$$ 
where $w_{0}$ is the longest element in the Weyl group $W$. Let $h$ denote the Coxeter number for the root system associated to $G$. 

We need to introduce another important class of modules. For $\lambda\in X_+$ with unique decomposition $\lambda = \lambda_0 + p^r\lambda_1$ with $\lambda_0\in X_r$ 
and $\lambda_1\in X_+$, define $\nabla^{(p,r)}(\lambda) = L(\lambda_0)\otimes \nabla(\lambda_1)^{(r)}$ where $(r)$ denotes the twisting of the module action by the 
$r$th Frobenius morphism. A $G$-module $M$ has a {\em good filtration} 
(resp. {\em good $(p,r)$-filtration}) if and only if $M$ has a filtration with factors of the form $\nabla(\mu)$ (resp. $\nabla^{(p,r)}(\mu)$) for suitable $\mu\in X_+$.  
In the case when $r=1$, good $(p,1)$-filtrations are often referred to as good $p$-filtrations. The similar notion of a Weyl $(p,r)$-filtration can be defined using the modules 
$\Delta^{(p,r)}(\lambda) = L(\lambda_0)\otimes \Delta(\lambda_1)^{(r)}$\footnote{This notation is used in \cite{KN} to denote a different class of modules.}. 

\subsection{Donkin's Tilting Module Conjecture} Given the notation in the preceding section, we can formally state 
Donkin's Tilting Module Conjecture. 
 
\begin{conj}\label{C:tilting} For all $\lambda\in X_{r}$, $T(\hat{\lambda})|_{G_{r}T}\cong \widehat{Q}_{r}(\lambda)$. 
\end{conj}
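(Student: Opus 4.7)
The plan is to follow the Kildetoft--Nakano / Sobaje philosophy, which reduces the TMC to a statement about the existence of good $(p,r)$-filtrations on an appropriate tilting module. The key equivalence to establish is the following: one has $T(\hat{\lambda})|_{G_rT} \cong \widehat{Q}_r(\lambda)$ if and only if $T(\hat{\lambda})$ admits a good $(p,r)$-filtration whose factors $\nabla^{(p,r)}(\mu)=L(\mu_0)\otimes\nabla(\mu_1)^{(r)}$ restrict on $G_rT$ to exactly the baby Verma composition prescribed by the $G_rT$-structure of the projective cover. Since the characters of $T(\hat{\lambda})$ and $\widehat{Q}_r(\lambda)$ a priori differ, half of this correspondence is about showing that the $G$-module $T(\hat{\lambda})$ is big enough to carry $\widehat{Q}_r(\lambda)$ as a $G_rT$-summand, with the other summands accounted for by the Frobenius-twisted Weyl factors.

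First I would set up the projectivity framework. Because $\text{St}_r$ is simultaneously simple, injective-projective over $G_r$, and tilting as a $G$-module, the tensor product $\text{St}_r\otimes L((p^r-1)\rho+w_0\lambda)$ is $G_r$-projective and decomposes as a direct sum of copies of the various $Q_r(\mu)$. By highest-weight considerations $Q_r(\lambda)$ appears exactly once, with highest weight $\hat{\lambda}=2(p^r-1)\rho+w_0\lambda$. The question is whether the $G$-module summand of $\text{St}_r\otimes L((p^r-1)\rho+w_0\lambda)$ with highest weight $\hat{\lambda}$ is actually tilting, which is precisely the content of the TMC. Using the translation-functor reductions mentioned in Section~\ref{S:filt-trans}, I would first restrict to regular weights (assuming $p\geq h$), making the combinatorics of the weight lattice more tractable.

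Second I would attempt to construct the good $(p,r)$-filtration directly. The natural candidate is to filter $T(\hat{\lambda})$ by analyzing how the Weyl factors of $L((p^r-1)\rho+w_0\lambda)\otimes \text{St}_r$ group themselves after tensor identity manipulations: each $\Delta(\mu)$ in a Weyl filtration contributes $\Delta^{(p,r)}$-type factors, and pairing these against the good filtration of $T(\hat{\lambda})$ via the $\tau$-duality on tilting modules should force the good $(p,r)$-filtration to exist. The combinatorial verification that the factors fit together requires controlling the pairings $\langle\mu+\rho,\alpha^{\vee}\rangle$ for the roots $\alpha$ occurring in highest-weight chains, and this is exactly the weight-combinatorial input that yields Jantzen's $p\geq 2h-2$ bound.

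The main obstacle—and the reason this is a long-standing conjecture with known counterexamples—is pushing below $p\geq 2h-2$. For smaller primes the naive combinatorial estimates break down, since unwanted composition factors of $L(\mu_0)\otimes \Delta(\mu_1)^{(r)}$ can spoil the filtration, and one cannot always resolve them. I expect to have to supplement the general method with case-by-case input: validity of Lusztig's character formula where available (needed for the delicate $\text{G}_2$, $p=7$ case), explicit $G_rT$-radical series computations for baby Verma modules via inverse Kazhdan--Lusztig polynomials in the singular region, and splitting criteria for short exact sequences of $G$-modules (as suggested by Section~4). For the truly small primes where even these tools fail, the conjecture is simply false, so the honest target of the strategy is a classification: for each $(\Phi,p)$, either produce the good $(p,r)$-filtration or extract an obstruction that exhibits a counterexample.
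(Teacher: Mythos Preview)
The statement you are attempting to prove is Conjecture~\ref{C:tilting}, which the paper explicitly presents as an open conjecture, not a theorem. There is no proof of this statement in the paper to compare your proposal against. Indeed, the paper recalls that the conjecture is \emph{false} in general: the authors themselves found the first counterexample for $\Phi=\mathrm{G}_2$, $p=2$, and mention further counterexamples for $\mathrm{B}_3$ ($p=2$) and $\mathrm{C}_3$ ($p=3$) in forthcoming work. So no correct proof of the bare statement ``for all $\lambda\in X_r$'' can exist.

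What you have written is not a proof but a research programme, and you effectively concede this in your final paragraph. Several pieces of it do mirror the paper's actual methodology for the \emph{specific} cases treated in Theorem~\ref{T:MainTheorem}: the reduction to good $(p,r)$-filtrations via Theorem~\ref{T:summary}, the use of $\St_r\otimes L(\lambda)$ and truncated categories to recover Jantzen's $p\ge 2h-2$ bound, translation to regular weights, and the appeal to Lusztig's formula and baby Verma radical series for $\mathrm{G}_2$, $p=7$. However, your claimed biconditional---that $T(\hat\lambda)|_{G_rT}\cong \widehat{Q}_r(\lambda)$ \emph{if and only if} $T(\hat\lambda)$ has a good $(p,r)$-filtration of a prescribed shape---is not established in the paper and is not obviously true; Theorem~\ref{T:summary} gives only sufficient conditions, under the additional hypothesis that $\St_r\otimes L(\mu)$ is tilting for all restricted $\mu$. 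Your second paragraph also garbles the setup: the relevant tensor product in the paper is $\St_r\otimes L(\lambda)$ for $\lambda\in X_r$, whose highest weight is $(p^r-1)\rho+\lambda$, not $\hat\lambda$; the summand $T(\hat\lambda)$ arises elsewhere in the argument (see Proposition~\ref{P:IfIPTMC} and Section~\ref{S:Idea}).
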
 
Alternatively, a more symmetric statement of the conjecture is that 
$$T((p^{r}-1)\rho+\lambda)|_{G_{r}T}\cong\widehat{Q}_{r}((p^{r}-1)\rho+w_{0}\lambda)$$
for all $\lambda\in X_{r}$. We remark that the statement of Conjecture~\ref{C:tilting} can also be formulated by replacing $G_{r}T$ by $G_{r}$, and $\widehat{Q}_{r}(\lambda)$ with 
$Q_{r}(\lambda)$. The tilting module $T(\hat{\lambda})$ has $L(\lambda)$ appearing with multiplicity one in both its $G$-socle and $G_r$-socle (and by duality, appears with multiplicity one in 
its semisimple quotients over $G$ and $G_r$ respectively).

We now demonstrate that to verify the TMC holds for $r\geq 1$, it suffices to show that it holds for $r=1$.  

\begin{prop}\label{P:equivTMCr=1}
The TMC holds for $G_1$ if and only if it holds for $G_r$ for all $r > 1$.
\end{prop}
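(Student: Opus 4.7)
The plan is to leverage two standard structural facts. First, for $\mu = \mu_0 + p\mu_1 \in X_r$ with $\mu_0 \in X_1$ and $\mu_1 \in X_{r-1}$, a direct computation gives the hat-weight decomposition $\hat{\mu} = \widehat{\mu_0} + p\widehat{\mu_1}$, and there is an isomorphism of $G_rT$-modules $\widehat{Q}_r(\mu) \cong \widehat{Q}_1(\mu_0) \otimes \widehat{Q}_{r-1}(\mu_1)^{(1)}$ (the Frobenius-twisted factor interpreted as a $G_rT$-module via the usual mechanism). Second, tilting modules are closed under tensor products and Frobenius twists, and $T(\nu)|_{G_rT}$ is $G_rT$-projective whenever $\nu \geq (p^r-1)\rho$, a bound satisfied by $\nu = \hat{\mu}$ for any $\mu \in X_r$.

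For the forward direction I would induct on $r$, with TMC at $r=1$ as the base and TMC at $r-1$ as the inductive hypothesis. For $\mu \in X_r$ decomposed as above, form $M := T(\widehat{\mu_0}) \otimes T(\widehat{\mu_1})^{(1)}$, a tilting $G$-module of highest weight $\hat{\mu}$ that contains $T(\hat{\mu})$ as a summand. By the inductive hypotheses and the tensor formula for $\widehat{Q}_r(\mu)$, $\dim M = \dim \widehat{Q}_1(\mu_0) \cdot \dim \widehat{Q}_{r-1}(\mu_1) = \dim \widehat{Q}_r(\mu)$. On the other hand, $T(\hat{\mu})|_{G_rT}$ is $G_rT$-projective and contains $\widehat{Q}_r(\mu)$ as a summand by the socle-multiplicity-one statement following Conjecture~\ref{C:tilting}. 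The sandwich $\dim \widehat{Q}_r(\mu) \le \dim T(\hat{\mu}) \le \dim M = \dim \widehat{Q}_r(\mu)$ collapses, giving $T(\hat{\mu})|_{G_rT} \cong \widehat{Q}_r(\mu)$.

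For the backward direction, given the TMC at some $r > 1$ and $\lambda \in X_1$, take $\mu := \lambda + p(p^{r-1}-1)\rho \in X_r$. Since $\widehat{Q}_{r-1}((p^{r-1}-1)\rho) = \text{St}_{r-1}$, the TMC at level $r$ reads $T(\hat{\mu})|_{G_rT} \cong \widehat{Q}_1(\lambda) \otimes \text{St}_{r-1}^{(1)}$, and a direct calculation gives $\hat{\mu} = \hat{\lambda} + p(p^{r-1}-1)\rho$. Form $N := T(\hat{\lambda}) \otimes \text{St}_{r-1}^{(1)}$, a tilting $G$-module of highest weight $\hat{\mu}$ containing $T(\hat{\mu})$ as a summand. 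Since $N|_{G_rT}$ is $G_rT$-projective, every complementary tilting summand $T(\nu_i)$ of $N$ must itself restrict to a $G_rT$-projective module; matching the indecomposable $G_rT$-projective decomposition of $N|_{G_rT}$ with the identification $T(\hat{\mu})|_{G_rT} \cong \widehat{Q}_r(\mu)$ forces the extras to vanish. Combined with the always-present $G_1T$-summand inclusion $\widehat{Q}_1(\lambda) \hookrightarrow T(\hat{\lambda})|_{G_1T}$, the resulting dimension identity $\dim T(\hat{\lambda}) = \dim \widehat{Q}_1(\lambda)$ yields $T(\hat{\lambda})|_{G_1T} \cong \widehat{Q}_1(\lambda)$.

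The hard part will be genuinely ruling out the extra tilting summands of $N$ in the backward direction. The cleanest route is a careful matching of multiplicities of $\widehat{Q}_r$-summands on both sides of the $G_rT$-restriction of $N$, exploiting that $\text{St}_{r-1}$ is simple and tilting so that its Frobenius twist rigidly controls the $G_rT$-structure; a backup route is to convert the TMC into the good $(p,r)$-filtration framework of \cite{KN,So} and argue that such filtrations behave compatibly under tensoring with $\text{St}_{r-1}^{(1)}$.
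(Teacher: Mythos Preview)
Your forward direction is correct and essentially matches the paper's argument: both assemble the tensor product $T(\widehat{\mu_0}) \otimes T(\widehat{\mu_1})^{(1)}$ (the paper fully expands into $p$-adic digits, you induct) and compare it to $\widehat{Q}_r(\mu)$; your dimension sandwich is a fine substitute for the paper's direct appeal to \cite[Lemma E.9]{rags}.

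The backward direction, however, has a real gap. You form $N = T(\hat{\lambda}) \otimes \St_{r-1}^{(1)}$ and need $N$ to be $G$-indecomposable (equivalently $N = T(\hat{\mu})$) to run the dimension comparison. Your ``matching of $\widehat{Q}_r$-summands'' sketch does not supply this: to apply the Donkin tensor-indecomposability criterion \cite[Lemma E.9]{rags} to $N$ one would need $T(\hat{\lambda})|_{G_1}$ to be indecomposable, which is precisely the TMC for $G_1$ you are trying to prove. Nor does a direct computation of the $G_rT$-socle of $N$ help, since $\Hom_{G_1}(L(\sigma_0),T(\hat{\lambda}))$ is an unknown $G/G_1$-module for $\sigma_0 \neq \lambda$ when TMC at level $1$ is not assumed.

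The fix is a one-line change: put the Frobenius twist on the \emph{other} factor. The paper takes the weight $\mu' = (p^{r-1}-1)\rho + p^{r-1}\lambda \in X_r$ and forms $N' = \St_{r-1} \otimes T(\hat{\lambda})^{(r-1)}$. Now \cite[Lemma E.9]{rags} applies immediately because $\St_{r-1}$ is simple over $G_{r-1}$, giving $N' \cong T(\widehat{\mu'})$ with no further work. Since $\widehat{Q}_r(\mu') \cong \St_{r-1} \otimes \widehat{Q}_1(\lambda)^{(r-1)}$, a dimension comparison using TMC at level $r$ then forces $\dim T(\hat{\lambda}) = \dim \widehat{Q}_1(\lambda)$, exactly as you intended.
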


\begin{proof} Suppose that the TMC holds for $G_1$.
Let $\lambda \in X_r$, and write $\lambda = \lambda_0 + p\lambda_1 + \cdots + p^{r-1}\lambda_{r-1}$ with each $\lambda_i \in X_1$.  From \cite[II.11.15]{rags}, one obtains by induction that there is an isomorphism of $G_1T$-modules
$$\widehat{Q}_r(\lambda) \cong \widehat{Q}_1(\lambda_0) \otimes \widehat{Q}_1(\lambda_1)^{(1)} \otimes \cdots \otimes \widehat{Q}_1(\lambda_{r-1})^{(r-1)}.$$
Since the TMC holds for $G_1$, this module also carries the $G$-structure
$$T(\hat{\lambda}_0) \otimes T(\hat{\lambda}_1)^{(1)} \otimes \cdots \otimes T(\hat{\lambda}_{r-1})^{(r-1)},$$
which a repeated application of \cite[Lemma E.9]{rags} shows to be isomorphic to the tilting module $T(2(p^r-1)\rho + w_0\lambda)$.  Since its highest weight is in $(p^r-1)\rho+X_+$, it is projective over $G_r$ \cite[Lemma E.8]{rags}, and thus is a lift to $G$ of $\widehat{Q}_r(\lambda)$.  This proves that the TMC holds for $G_r$.

Conversely, if the TMC fails for $G_1$, then there is a weight $\lambda \in X_1$ such that $T(\hat{\lambda})$ is of larger dimension than $Q_1(\lambda)$ (which must split off as a $G_1$-summand).  For any $r>1$, the indecomposability of the tilting module $\St_{r-1}$ over $G_{r-1}$ implies that
$$\St_{r-1} \otimes T(\hat{\lambda})^{(r-1)} \cong T((p^{r-1}-1)\rho+(p^{r-1})\hat{\lambda})=T(2(p^r-1)\rho+w_0((p^{(r-1)}-1)\rho+\lambda)).$$
Over $G_{r-1}T$, we have an isomorphism
$$\widehat{Q}_r((p^{(r-1)}-1)\rho+p^{r-1}\lambda) \cong \St_{r-1} \otimes \widehat{Q}_1(\lambda)^{(r-1)}.$$
It now follows by a dimension argument that
$$T(2(p^r-1)\rho+w_0((p^{(r-1)}-1)\rho+\lambda)) \not\cong \widehat{Q}_r((p^{(r-1)}-1)\rho+p^{r-1}\lambda).$$
Thus, the TMC fails for $G_{r}$ when $r > 1$ as well.
\end{proof}

\subsection{Induced modules and good (p,r)-filtrations} For a dominant weight $\mu$, a long-standing question asks  when $\nabla(\mu)$ admits a good $(p,r)$-filtration.  We will see later (Theorem \ref{T:summary}), that an affirmative answer to this question can provide a means of verifying the TMC.  

One potential approach to the good $(p,r)$-filtration question is to make use of the structure of baby Verma modules and the fact that
$$\nabla(\mu) = \ind_{B}^{G} \mu = \ind_{G_rB}^G\circ\ind_{B}^{G_rB} \mu = \ind_{G_rB}^G  \widehat{Z}_r'(\mu).$$

For any $\mu \in X_+$, $\widehat{Z}_r'(\mu)$ admits a composition series  with simple $G_rB$-compositions factors of the form $L(\s_0)\otimes p^r\s_1$ for $\s_0 \in X_r$ and $\s_1 \in X$.  With any such composition factor, note that (cf.\cite[II.9.13(2)]{rags})
$$
\ind_{G_rB}^G(L(\s_0)\otimes p^r\s_1) \cong L(\s_0)\otimes \ind_{G_rB}^G(p^r\s_1) \cong L(\s_0)\otimes [\ind_{B}^G\s_1]^{(r)} = L(\s_0)\otimes \nabla(\s_1)^{(r)}.
$$

The following proposition (stated in a very general context) gives several different conditions on the terms of such a composition series to show that $\nabla(\mu)$ admits a good $(p,r)$-filtration.  The first of these conditions is certainly well-known and encompasses the conditions given in  \cite[Prop. II.9.14]{rags}.

\begin{theorem}\label{T:induction} Let $M$ be a finite-dimensional $B$-module and let 
$$0=N_{0}\subseteq N_{1}\subseteq N_{2} \subseteq \dots   \subseteq N_{t}=\operatorname{ind}_{B}^{G_{r}B}M$$
be a composition series as a $G_{r}B$-module for $\operatorname{ind}_{B}^{G_{r}B}M$ with 
$N_{i}/N_{i-1}\cong L(\mu_{i})\otimes p^{r}\sigma_{i}$ where $\mu_{i}\in X_{r}$ and $\sigma_{i}\in X$ for $i=1,2,\dots,t$. 
Suppose that one of the following conditions holds
\begin{itemize}
\item[(a)] for all $1 \leq j \leq t$, $R^1\ind_B^G\s_j = 0$;
\item[(b)]  for all $1\leq j < i \leq t$ with $\mu_i = \mu_j$, 
$$\operatorname{Hom}_{G}(\nabla(\s_i),R^1\ind_{B}^{G}\s_j) = 0;$$ 
\item[(c)] both of the following hold:
\begin{itemize}
\item[(i)] for all $1 \leq i \leq t$, $R^2\ind_{B}^{G}\s_i= 0$, and
\item[(ii)] for all $1\leq j < i \leq t$ with $\mu_i = \mu_j$, either
$$\operatorname{Hom}_{G}(\nabla(\s_i),R^1\ind_{B}^{G}\s_j) = 0$$ 
or any non-zero homomorphism $\nabla(\s_i) \to R^1\ind_{B}^{G}\s_j$ is an isomorphism.
\end{itemize}
\end{itemize}
Then $\operatorname{ind}_{B}^{G}M$ has a good $(p,r)$-filtration. 
\end{theorem}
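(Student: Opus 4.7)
The plan is to use transitivity of induction, $\ind_B^G = \ind_{G_rB}^G \circ \ind_B^{G_rB}$, and apply $\ind_{G_rB}^G$ to the given $G_rB$-composition series. The identity $\ind_{G_rB}^G(L(\mu) \otimes p^r\sigma) \cong L(\mu) \otimes \nabla(\sigma)^{(r)}$ displayed just before the theorem produces, for dominant $\sigma$, the good $(p,r)$-filtration factor $\nabla^{(p,r)}(\mu + p^r\sigma)$. The analogous formula for derived functors, $R^j\ind_{G_rB}^G(L(\mu) \otimes p^r\sigma) \cong L(\mu) \otimes R^j\ind_B^G(\sigma)^{(r)}$, follows from the tensor identity together with the standard computation of $\ind_{G_rB}^G$ on a Frobenius-twisted $B$-module.

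Setting $F_i := \ind_{G_rB}^G N_i$, I would proceed by induction on $i$ to prove that $F_i$ carries a good $(p,r)$-filtration; the case $i = t$ then yields the theorem. Applying $\ind_{G_rB}^G$ to $0 \to N_{i-1} \to N_i \to L(\mu_i)\otimes p^r\sigma_i \to 0$ produces the long exact sequence
$$0 \to F_{i-1} \to F_i \to L(\mu_i)\otimes\nabla(\sigma_i)^{(r)} \stackrel{\partial_i}{\longrightarrow} R^1 F_{i-1} \to R^1 F_i \to L(\mu_i)\otimes R^1\ind_B^G(\sigma_i)^{(r)} \to R^2 F_{i-1} \to \cdots,$$
so the inductive step amounts to controlling the connecting map $\partial_i$. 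Under condition (a), the hypothesis $R^1\ind_B^G\sigma_j = 0$ propagates inductively to $R^1 F_{i-1} = 0$, forcing $\partial_i = 0$; the sequence truncates to a short exact sequence whose cokernel $L(\mu_i)\otimes\nabla(\sigma_i)^{(r)}$ is either $\nabla^{(p,r)}(\mu_i + p^r\sigma_i)$ (if $\sigma_i \in X_+$) or zero, and the good $(p,r)$-filtration extends.

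Under condition (b), $R^1 F_{i-1}$ need not vanish, so I would strengthen the inductive statement to carry, in parallel with the good $(p,r)$-filtration on $F_{i-1}$, a filtration of $R^1 F_{i-1}$ each of whose subquotients embeds into $L(\mu_k) \otimes R^1\ind_B^G(\sigma_k)^{(r)}$ for some $k < i$; this is built from the very same long exact sequences at earlier stages. A nonzero $\partial_i$ would then yield a nonzero $G$-map from $L(\mu_i) \otimes \nabla(\sigma_i)^{(r)}$ into such a subobject. By Steinberg's tensor product theorem, every composition factor of $L(\mu_i) \otimes \nabla(\sigma_i)^{(r)}$ has $X_r$-component $\mu_i$, and every composition factor of $L(\mu_k) \otimes R^1\ind_B^G(\sigma_k)^{(r)}$ has $X_r$-component $\mu_k$, so the existence of such a map forces $\mu_i = \mu_k$. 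Restricting to $G_r$ and invoking Schur's lemma for the simple $G_r$-module $L(\mu_i)$, the map has the form $\operatorname{id}_{L(\mu_i)} \otimes g$ for some $G$-equivariant $g \colon \nabla(\sigma_i)^{(r)} \to R^1\ind_B^G(\sigma_k)^{(r)}$; untwisting Frobenius produces a nonzero element of $\Hom_G(\nabla(\sigma_i), R^1\ind_B^G\sigma_k)$, contradicting the hypothesis.

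Condition (c) is the main obstacle. The hypothesis $R^2\ind_B^G\sigma_i = 0$ is exactly what is needed to preserve the $R^1$-filtration bookkeeping of the previous paragraph (ensuring $R^2 F_{i-1} = 0$ inductively), while the weakened Hom condition now permits a nonzero $\partial_i$. In the isomorphism alternative, the analysis from case (b) pins $\partial_i$ down to an injection whose image is a complete $L(\mu_k) \otimes R^1\ind_B^G(\sigma_k)^{(r)}$-factor of $R^1 F_{i-1}$; the long exact sequence then collapses to give $F_i \cong F_{i-1}$, so the good $(p,r)$-filtration is inherited unchanged and the tracking filtration on $R^1 F_i$ is obtained by excising the absorbed factor and appending $L(\mu_i) \otimes R^1\ind_B^G(\sigma_i)^{(r)}$ (using $R^2 F_{i-1} = 0$). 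With this bookkeeping the induction closes, and $F_t = \ind_B^G M$ has the desired good $(p,r)$-filtration.
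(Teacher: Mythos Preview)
Your proposal is correct and follows essentially the same approach as the paper: transitivity of induction, applying $\ind_{G_rB}^G$ to the composition series, and controlling the connecting maps $\partial_i$ in the resulting long exact sequences by an induction on $i$. The key observation you isolate---that a nonzero map $L(\mu_i)\otimes\nabla(\sigma_i)^{(r)} \to L(\mu_k)\otimes R^1\ind_B^G(\sigma_k)^{(r)}$ forces $\mu_i=\mu_k$ and yields, after untwisting, a nonzero element of $\Hom_G(\nabla(\sigma_i),R^1\ind_B^G\sigma_k)$---is exactly the paper's equation (2.3.1), proved via the adjunction $\Hom_G(-,-)\cong\Hom_{G/G_r}(-,\Hom_{G_r}(-,-))$.

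The only organizational difference is in case (b): the paper first runs a separate induction proving $\Hom_G(\nabla^{(p,r)}(\mu_i+p^r\sigma_i),R^1\ind_{G_rB}^G N_k)=0$ for all $i$ and $k$, and then uses this to force $\partial_i=0$; you instead fold this into a single strengthened induction by carrying along a filtration of $R^1 F_{i-1}$ with subquotients embedding into the $L(\mu_k)\otimes R^1\ind_B^G(\sigma_k)^{(r)}$. For case (c), the two arguments are nearly identical, including the point that when $\partial_i\neq 0$ its image splits off one complete filtration factor of $R^1 F_{i-1}$, giving $F_i\cong F_{i-1}$.
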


\begin{proof}  Recall the definition of $\nabla^{(p,r)}(\lambda)$ from Section~\ref{S:Notation}. 
We first make a general observation: for any $i, j$, 
\begin{equation}\label{E:hom}
\begin{split}
\Hom_{G}&\left(\nabla^{(p,r)}(\mu_i + p^r\s_i),L(\mu_j)\otimes[R^1\ind_B^G\s_j]^{(r)}\right)     \\
&= \Hom_{G}\left(L(\mu_i)\otimes \nabla(\s_i)^{(r)},L(\mu_j)\otimes [R^1\ind_B^G\s_j]^{(r)}\right) \\
&\cong \Hom_{G/G_r}\left(\nabla(\s_i)^{(r)},\Hom_{G_r}(L(\mu_i),L(\mu_j))\otimes[R^1\ind_B^G\s_j]^{(r)}\right) \neq 0
\end{split}
\end{equation}
implies that $\mu_i = \mu_j$ (since $\mu_i, \mu_j \in X_r$) and $\Hom_{G}(\nabla(\s_i),R^1\ind_{B}^{G}\s_j) \neq 0$.  In particular, the $i$th and $j$th composition factors must have the same isotypic component.

If condition (a) holds, the long exact sequence in induction could be used to prove the claim.   However, since condition (a) immediately implies condition (b), we simply provide a proof of the claim under condition (b).  To that end, assume condition (b) holds.  We will first show that $\operatorname{Hom}_{G}(\nabla^{(p,r)}(\mu_{i}+p^{r}\sigma_{i}),R^{1}\operatorname{ind}_{G_{r}B}^{G} N_{k})=0$ for all $i$ and $k$ via induction on $k$.  
For $k=1$, one has 
$$R^{1}\text{ind}_{G_{r}B}^{G} N_{1}\cong R^{1}\text{ind}_{G_{r}B}^{G}\left( L(\mu_{1})\otimes p^{r}\sigma_{1}\right) \cong L(\mu_{1})\otimes [R^{1}\text{ind}_{B}^{G} \sigma_{1}]^{(r)},$$ 
so the claim follows by using the hypothesis and \eqref{E:hom}. 

Now assume that $\operatorname{Hom}_{G}(\nabla^{(p,r)}(\mu_{i}+p^{r}\sigma_{i}),R^{1}\operatorname{ind}_{G_{r}B}^{G} N_{k-1})=0$ for all $i$. 
Consider the short exact sequence
 $$0\rightarrow N_{k-1}\rightarrow N_{k} \rightarrow N_{k}/N_{k-1} \rightarrow 0.$$
One has a long exact sequence 
$$0\rightarrow \text{ind}_{G_{r}B}^{G} N_{k-1} \rightarrow  \text{ind}_{G_{r}B}^{G} N_{k} \rightarrow  \text{ind}_{G_{r}B}^{G} N_{k}/N_{k-1} \overset{\phi}{\rightarrow}  R^{1}\text{ind}_{G_{r}B}^{G} N_{k-1} \to \cdots.$$ 
By the induction hypothesis the map $\phi$ is zero, therefore, one has an exact sequence
$$0\rightarrow R^{1}\text{ind}_{G_{r}B}^{G} N_{k-1} \rightarrow R^{1}\text{ind}_{G_{r}B}^{G} N_{k} \rightarrow R^{1}\text{ind}_{G_{r}B}^{G} N_{k}/N_{k-1}. $$ 
Now by the induction hypothesis, the hypothesis of the proposition, and \eqref{E:hom},
$$\operatorname{Hom}_{G}(\nabla^{(p,r)}(\mu_{i}+p^{r}\sigma_{i}),  R^{1}\text{ind}_{G_{r}B}^{G} N_{k-1})=0$$ 
and 
$$\operatorname{Hom}_{G}(\nabla^{(p,r)}(\mu_{i}+p^{r}\sigma_{i}), R^{1}\text{ind}_{G_{r}B}^{G} N_{k}/N_{k-1})=0$$ 
for all $i$.
Consequently, 
$$\operatorname{Hom}_{G}(\nabla^{(p,r)}(\mu_{i}+p^{r}\sigma_{i}),  R^{1}\text{ind}_{G_{r}B}^{G}N_{k})=0$$
for all $i$.  

We will next show that $\text{ind}_{G_{r}B}^{G} N_{k}$ has a good $(p,r)$-filtration for all $k$. In particular, this will show that 
$\text{ind}_{B}^{G}N_{t}=\text{ind}_{G_{r}B}^{G} \circ\text{ind}_{B}^{G_{r}}M=\text{ind}_{B}^{G}M$ has a good $(p,r)$-filtration. 
For $k=1$, one has 
$$\text{ind}_{G_{r}B}^{G} N_{1}\cong \text{ind}_{G_{r}B}^{G}\left( L(\mu_{1})\otimes p^{r}\sigma_{1}\right) \cong L(\mu_{1})\otimes [\text{ind}_{B}^{G} \sigma_{1}]^{(r)},$$ 
which verifies the claim. 

Now assume that $\text{ind}_{G_{r}B}^{G}N_{k-1}$ has a good $(p,r)$-filtration and consider the short exact sequence, 
 $$0\rightarrow N_{k-1}\rightarrow N_{k} \rightarrow N_{k}/N_{k-1} \rightarrow 0.$$
One has a long exact sequence 
$$0\rightarrow \text{ind}_{G_{r}B}^{G} N_{k-1} \rightarrow  \text{ind}_{G_{r}B}^{G} N_{k} \rightarrow  \text{ind}_{G_{r}B}^{G} N_{k}/N_{k-1} \overset{\phi}{\rightarrow}  R^{1}\text{ind}_{G_{r}B}^{G} N_{k-1}\to \cdots.$$ 
Recall that $\text{ind}_{G_{r}B}^{G} N_{k}/N_{k-1} \cong \nabla^{(p,r)}(\mu_{k}+p^{r}\sigma_{k})$.
By the first part, $\operatorname{Hom}_{G}(\nabla^{(p,r)}(\mu_{i}+p^{r}\sigma_{i}),  R^{1}\text{ind}_{G_{r}B}^{G} N_{k-1})=0$ for all $i$, so the map $\phi$ is zero. 
Hence, $\text{ind}_{G_{r}B}^{G}N_{k}$ has a good $(p,r)$-filtration. 

Lastly, consider scenario (c).  We first show by induction on $k$ that $R^2\ind_{G_rB}^{G}N_k = 0$ for all $1 \leq k \leq t$.  This holds by assumption for $k = 1$.   For arbitrary $k$, consider the short exact sequence 
$$0\rightarrow N_{k-1}\rightarrow N_{k} \rightarrow N_{k}/N_{k-1} \rightarrow 0$$ and the associated long exact sequence
$$\cdots \to R^2\ind_{G_rB}^{G}N_{k-1} \to R^2\ind_{G_rB}^GN_k \to R^2\ind_{G_rB}^{G}N_k/N_{k-1} \to \cdots.$$
Since the rightmost term is zero by assumption and the leftmost term is zero by the inductive hypothesis, the middle term is also zero as claimed.

We now proceed to show inductively on $k$ that $\ind_{G_rB}^GN_k$ has a good $(p,r)$-filtration for all $k$. To do so, we simultaneously show inductively that 
$R^1\ind_{G_rB}^GN_k$ has a filtration with factors of the form $R^1\ind_{G_rB}^GN_j/N_{j-1} \cong L(\mu_j)\otimes[ R^1\ind_{B}^G\s_j]^{(r)}$ 
for $1 \leq j \leq k$ (not necessarily all $j$ appearing).  Both claims trivally hold for $k = 1$, so we now proceed inductively.  Consider again the short exact sequence
$$0\rightarrow N_{k-1}\rightarrow N_{k} \rightarrow N_{k}/N_{k-1} \rightarrow 0$$ and the beginning of the associated long exact sequence
\begin{align*}
0 &\to \ind_{G_rB}^GN_{k-1} \to \ind_{G_rB}^GN_k \to \ind_{G_rB}^GN_k/N_{k-1}  \\
	&\overset{\phi}{\to} R^1\ind_{G_rB}^GN_{k-1} \to R^1\ind_{G_rB}^GN_k \to R^1\ind_{G_rB}^GN_k/N_{k-1} \to 0,
\end{align*}
where the culminating zero term is due to the vanishing of $R^2$ shown above.  By induction, we may assume that $\ind_{G_rB}^GN_{k-1}$ admits a good $(p,r)$-filtration and $R^1\ind_{G_rB}N_{k-1}$ admits a filtration with factors of the form  $R^1\ind_{G_rB}^GN_j/N_{j-1} \cong L(\mu_j)\otimes[ R^1\ind_{B}^G\s_j]^{(r)}$ 
for some $1 \leq j \leq k-1$.

If the map $\phi$ is zero, then exactness immediately gives the claims for $\ind_{G_rB}^GN_k$ and $R^1\ind_{G_rB}^GN_k$.   Suppose now that $\phi$ is non-zero.  Then $\ind_{G_rB}^GN_k/N_{k-1} \cong \nabla^{(p,r)}(\mu_k + p^r\s_k)$ is necessarily non-zero.  By the inductive assumption on $R^1\ind_{G_rB}^GN_{k-1}$, the original assumption (ii), and \eqref{E:hom}, the map $\phi$ must be an injection, from which it follows that $\ind_{G_rB}^GN_k \cong \ind_{G_rB}^GN_{k-1}$ and,  hence, admits a good $(p,r)$-filtration. Furthermore, $\phi$ must be an isomorphism onto one of the $R^1\ind_{G_rB}^GN_j/N_{j-1}$ factors.   Thus exactness gives the claimed filtration on $R^1\ind_{G_rB}^GN_k$.

\end{proof}

In our applications of interest, it will suffice to consider the case $r = 1$.  The following gives a weight condition that can be used to verify that $R^1\ind_{B}^G \s_1 = 0$ and potentially apply the preceding proposition. By $\a_0$ and $\tilde{\a}$ we denote the highest short root and the highest long root of the root system, respectively.

\begin{prop}\label{P:weights} Let $\mu, \s_0 \in X_1$ and $\s_1 \in X.$ If $L(\s_0) \otimes p\s_1$ is a $G_1B$-composition factor of $\widehat{Z}_1'((p-1)\rho +\mu)$ and 
$R^1\ind_{B}^G \s_1 \neq 0$,
then there exists a simple root $\alpha_i$ and a weight $\gamma$ in the weight lattice of $\St_1$ such that
\begin{equation}\label{E:weights}
\langle \gamma, \alpha_i^{\vee} \rangle \leq -2p -\langle \s_0, \tilde{\alpha}^{\vee} \rangle- \langle \mu, \alpha_i^{\vee} \rangle \leq -2p.
\end{equation}  
\end{prop}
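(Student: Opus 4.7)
The plan is to exhibit $\gamma$ explicitly as a $T$-weight of $\St_1$ arising from the given composition factor $L(\sigma_0) \otimes p\sigma_1$, using two classical ingredients.

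First, I would invoke the character identity
\[
\operatorname{ch}(\widehat{Z}_{1}'((p-1)\rho + \mu)) = e^{\mu}\cdot\operatorname{ch}(\St_1),
\]
which holds because both sides equal $e^{(p-1)\rho + \mu}\prod_{\alpha > 0}(1 + e^{-\alpha} + \cdots + e^{-(p-1)\alpha})$: the left by the description of the baby Verma as an induced module from $B$ to $G_1B$, and the right by applying the Weyl denominator identity to $\chi((p-1)\rho)$. Consequently the weights of $\widehat{Z}_1'((p-1)\rho + \mu)$ are exactly the $\mu$-shifts of weights of $\St_1$. Since $L(\sigma_0)\otimes p\sigma_1$ is a $G_1B$-composition factor, every weight of this simple factor occurs in the baby Verma; and since $\operatorname{ch}(L(\sigma_0))$ is $W$-invariant, every extremal weight $w\sigma_0$ (for $w\in W$) is a weight of $L(\sigma_0)$. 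Combining these, for every $w\in W$ the vector $w\sigma_0 + p\sigma_1 - \mu$ is a weight of $\St_1$; these will be the candidates for $\gamma$.

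Second, I would invoke the standard cohomological fact that $R^1\ind_B^G\sigma_1\ne 0$ forces the existence of a simple root $\alpha_i$ with $\langle\sigma_1,\alpha_i^\vee\rangle\le -2$, obtained by inducing in stages through the minimal parabolic $P_i$ and applying Kempf vanishing for the $\operatorname{SL}_2$-quotient of $P_i/B$. Fixing this $\alpha_i$, I would set $\gamma := w\sigma_0 + p\sigma_1 - \mu$ for a suitable $w \in W$ and compute
\[
\langle\gamma,\alpha_i^\vee\rangle = \langle w\sigma_0,\alpha_i^\vee\rangle + p\langle\sigma_1,\alpha_i^\vee\rangle - \langle\mu,\alpha_i^\vee\rangle \le \langle w\sigma_0,\alpha_i^\vee\rangle - 2p - \langle\mu,\alpha_i^\vee\rangle.
\]
It then suffices to produce $w$ with $\langle w\sigma_0,\alpha_i^\vee\rangle \le -\langle\sigma_0,\tilde{\alpha}^\vee\rangle$. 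The second inequality of the proposition, $-2p - \langle\sigma_0,\tilde{\alpha}^\vee\rangle - \langle\mu,\alpha_i^\vee\rangle \le -2p$, is immediate from $\sigma_0,\mu\in X_+$.

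The main hurdle will be producing this $w$. Since $s_i(\alpha_i^\vee) = -\alpha_i^\vee$, the $W$-orbit $\mathcal{O}$ of $\alpha_i^\vee$ is symmetric under negation, so $\min_{w\in W}\langle\sigma_0,w^{-1}\alpha_i^\vee\rangle = -\langle\sigma_0,\beta^\vee\rangle$, where $\beta^\vee$ is the unique dominant coroot in $\mathcal{O}$. When $\alpha_i$ is a long root, $\alpha_i^\vee$ is a short coroot and $\beta^\vee = \tilde{\alpha}^\vee$; when $\alpha_i$ is a short root, $\alpha_i^\vee$ is a long coroot and $\beta^\vee = \alpha_0^\vee$. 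Since $\alpha_0^\vee - \tilde{\alpha}^\vee$ lies in the nonnegative $\mathbb{Z}$-span of the simple coroots, dominance of $\sigma_0\in X_1$ gives $\langle\sigma_0,\beta^\vee\rangle \ge \langle\sigma_0,\tilde{\alpha}^\vee\rangle$ in both cases, producing the required $w$ and completing the proof.
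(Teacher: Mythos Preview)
Your proof is correct and follows essentially the same route as the paper's: both obtain the simple root $\alpha_i$ from $\langle\sigma_1,\alpha_i^\vee\rangle\le -2$, set $\gamma = w\sigma_0 + p\sigma_1 - \mu$ (using that the weights of $\widehat{Z}_1'((p-1)\rho+\mu)$ are the $\mu$-translates of those of $\St_1$), and choose $w$ so that $-w^{-1}\alpha_i\in\{\alpha_0,\tilde{\alpha}\}$, which is exactly your argument via the dominant coroot $\beta^\vee$ in the $W$-orbit of $\alpha_i^\vee$. Your write-up supplies a few more details (the character identity, the orbit symmetry, and the comparison $\langle\sigma_0,\alpha_0^\vee\rangle\ge\langle\sigma_0,\tilde{\alpha}^\vee\rangle$), but the argument is the same.
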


\begin{proof}
  If $L_1(\s_0) \otimes p\s_1$ is a composition factor of $\widehat{Z}_1'((p-1)\rho +\mu)$ with $R^{1}\ind_{B}^G \s_1 \neq 0$, then there  exists a simple root $\alpha_i$ with $\langle \s_1, \alpha_i^{\vee} \rangle \leq -2$
  (cf. \cite[Prop. II.5.4]{rags}). 
 Furthermore, any weight of the form $w\s_0+p\s_1$, with $w \in W,$ also appears in the weight lattice of  $\widehat{Z}_1'((p-1)\rho +\mu).$ We may choose $w$ such that $-w^{-1}\alpha_i \in \{ \a_0, \tilde{\a}\}.$ Then 
$ \langle w\s_0, \alpha_i^{\vee} \rangle \leq -\langle \s_0, \tilde{\a}^{\vee} \rangle.$ 
The weight lattice of $\widehat{Z}_1'((p-1)\rho +\mu)$ is obtained from the weight lattice of $\St_1$ by simply adding $\mu$ to each weight. There has to exist a $\gamma$ in the weight lattice of $\St_1$ such that $\gamma  = w\s_0+p\s_1-\mu.$ 
Taking the inner product with $\alpha_i^{\vee}$ yields the assertion.
\end{proof}

\subsection{Translation and (p,r)-filtrations}  Let $T_{\nu}^{\nu'}$ denote the translation functor associated to weights $\nu, \nu' \in \overline{C}_{\mathbb Z}$ (closure of the bottom alcove). 
In cases when $p\geq h$,  we prove below that for a given $G$ it is sufficient to prove the TMC when $r=1$ for regular restricted weights. 

\begin{prop} If $p\geq h,$ then it is sufficient to verify the TMC for $r=1$ and all $\la \in X_1 \cap W_p \cdot 0$.
\end{prop}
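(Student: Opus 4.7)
The plan is to use translation functors to reduce from an arbitrary restricted weight to a regular weight in the principal block, leveraging that Proposition~\ref{P:equivTMCr=1} has already handled the reduction from general $r$ to $r=1$. The hypothesis $p \geq h$ is precisely what guarantees that the interior of the bottom alcove $C_{\mathbb{Z}}$ contains integral weights, so that $X_1 \cap W_p \cdot 0$ contains regular weights and the present proposition's hypothesis is nonvacuous.

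Given $\lambda \in X_1$, I would let $\nu \in \overline{C}_{\mathbb{Z}}$ denote the unique dominant weight linked to $\lambda$ under the dot action of $W_p$, and then choose a regular weight $\lambda_0 \in X_1 \cap W_p \cdot 0$ matched to $\lambda$ under translation (concretely, $\lambda_0 = w \cdot 0$ and $\lambda = w \cdot \nu$ for a common coset representative $w$ modulo the stabilizer of $\nu$). Two standard properties of the translation functor $T_0^{\nu}$ are then the engine of the argument (cf.~\cite[II.7, II.11]{rags}): first, translation commutes with restriction to $G_1T$ and sends $\widehat{Q}_1(\lambda_0)$ to a direct sum of $G_1T$-projective covers containing $\widehat{Q}_1(\lambda)$ as a summand; second, translation sends $T(\hat{\lambda}_0)$ to a direct sum of indecomposable tilting modules containing $T(\hat{\lambda})$ as a summand. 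Applying $T_0^{\nu}$ to the hypothesized isomorphism $T(\hat{\lambda}_0)|_{G_1T} \cong \widehat{Q}_1(\lambda_0)$ yields an isomorphism between the translated modules, and comparing indecomposable summands via Krull--Schmidt isolates $T(\hat{\lambda})|_{G_1T} \cong \widehat{Q}_1(\lambda)$.

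The main technical point, and where care is required, is verifying that the duality $\mu \mapsto \hat{\mu} = 2(p-1)\rho + w_0\mu$ intertwines correctly with translation. Specifically, one must check that the facet containing $\hat{\lambda}_0$ is carried to the facet containing $\hat{\lambda}$ in the upper restricted region, so that the indecomposable tilting summand produced by translation is exactly $T(\hat{\lambda})$ and not some other tilting in the translated block. This is a direct verification using that the hat involution is essentially reflection about $(p-1)\rho$, and hence is $W_p$-equivariant up to the longest element $w_0$; combined with the compatibility of translation with the highest weight structure, this completes the reduction and hence the proposition.
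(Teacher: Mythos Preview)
Your overall strategy via translation functors is correct, but you translate in the opposite direction to the paper, and this introduces a genuine gap in your Krull--Schmidt step. You translate \emph{onto} the wall with $T_0^{\nu}$ and then claim that comparing indecomposable summands isolates $T(\hat\lambda)|_{G_1T}\cong\widehat Q_1(\lambda)$. But the two decompositions you compare live in different categories: $T_0^{\nu}T(\hat\lambda_0)$ decomposes into tiltings over $G$, while $T_0^{\nu}\widehat Q_1(\lambda_0)$ decomposes into PIMs over $G_1T$. If the $G$-decomposition had several tilting summands, each of them could restrict over $G_1T$ to a sum of several PIMs, and knowing only the total $G_1T$-decomposition does not tell you which PIM sits inside which tilting summand. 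So Krull--Schmidt alone does not isolate the pair you want. (In fact, with the correct minimal choice of $w$ both translates are already indecomposable, by \cite[II.E.11, II.11.10]{rags}, and then your argument goes through immediately; but you would need to state and use this, not hedge with ``a direct sum containing \dots\ as a summand''.)

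The paper avoids this matching problem by translating in the reverse direction. Given $\gamma\in X_1$ on a wall, it writes $T(\hat\gamma)|_{G_1T}\cong\widehat Q_1(\gamma)\oplus M$ with $M$ injective over $G_1T$, and applies $T_\mu^0$ (translation \emph{off} the wall). With $w$ chosen minimal so that $\hat w\cdot 0$ is maximal, one has $T_\mu^0 T(\hat\gamma)=T(\hat w\cdot 0)$ by \cite[Prop.~5.2]{And00} and $T_\mu^0\widehat Q_1(\gamma)=\widehat Q_1(w\cdot 0)$ by \cite[II.11.10]{rags}; this min/max bookkeeping is exactly the ``direct verification'' you defer in your last paragraph, and the paper carries it out explicitly. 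Since translation off a wall does not annihilate a nonzero injective $G_1T$-module, $M\ne 0$ would force $T_\mu^0 M\ne 0$, contradicting the assumed isomorphism at the regular weight. Thus the paper's direction yields a clean contradiction argument with no cross-category summand matching needed.
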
 

\begin{proof}  Assume that $p \geq h$ and that $T(\hat{\la})|_{G_1T} \cong \widehat{Q}_1(\la)$ for all  $\la \in X_1 \cap W_p \cdot 0.$  Given $\ga \in X_1$ there exist unique $\mu \in \overline{C}_{\mathbb Z}$ and $w \in W_p$ such that $\ga = w \cdot \mu$ and $w \cdot 0$ is minimal among all $wx\cdot 0$ with $x \in \text{Stab}_{W_p}(\mu).$  In addition,  we denote by $\hat{w}$ the  element of $W_p$ defined via $\hat{w} \cdot 0 = 2(p-1)\rho + w_0(w\cdot 0)=p\cdot 2\rho +(w_0w)\cdot 0.$ Then $\hat{w} \cdot \mu =2(p-1)\rho+w_0(w\cdot \mu)= 2(p-1)\rho+w_0\gamma= \hat{\gamma}.$ Moreover, 
$w_0w.0$ will be maximal among all $w_0 w x\cdot 0$ with $x \in \text{Stab}_{W_p}(\mu),$ as will be $\hat{w} \cdot 0$ among all $\hat{w}x\cdot 0.$

By \cite[Proposition 5.2]{And00}, $T_{\mu}^{0} T(\hat{\gamma}) = T_{\mu}^{0} T(\hat{w}\cdot \mu) =T(\hat{w} \cdot 0).$ Similarly, it follows from \cite[II 11.10]{rags} that $T_{\mu}^0 \widehat{Q}_1(\gamma) = T_{\mu}^0 \widehat{Q}_1(w\cdot \mu) =\widehat{Q}_1(w \cdot 0).$ 
We know that $\hat{Q}_1(\gamma)$ is a $G_1T$-summand of $T(\hat \gamma).$ 
Therefore, we can write $T(\hat{\gamma})|_{G_1T} \cong \widehat{Q}_1(\gamma) \oplus M,$ where $M$ is either zero or an injective $G_1T$-module.  But now $M\neq 0$ implies $T_{\mu}^0 M \neq 0.$  This would contradict our assumption that $T(\hat{w} \cdot 0)|_{G_1T}\cong \widehat{Q}_1(w \cdot 0).$ Hence, $M=0$ and  $T(\hat{\gamma})|_{G_1T} \cong \widehat{Q}_1(\gamma).$

\end{proof}
One of the main methods to verify the TMC is to show that (i) for each $\lambda \in X_r$, $\St_{r} \otimes 
L(\lambda)$ has a good filtration and (ii)  $\nabla^{(p,r)}(\hat{\lambda})$ has a good $(p,r)$-filtration (see Theorem~\ref{T:summary} and \cite[Theorem 1.2.1]{So}). 

Recall that good and Weyl filtrations behave well under translation (cf. \cite[II.7.13]{rags}).  For any $\nu, \nu' \in \overline{C}_{\mathbb Z}$, if $M$ has a good or Weyl filtration, then so does $T_{\nu}^{\nu'}(M)$. 
One would like this to be true for good $(p,r)$-filtrations or Weyl $(p,r)$-filtrations in order to verify (ii).  In the context of Weyl $(p,r)$-filtrations, it was observed (without formal proof) in \cite{PS} that this holds in some cases. 
We provide a proof of this statement here.

\begin{prop}\label{P:translation} Let $M$ be a rational $G$-module and  $\nu, \nu' \in \overline{C}_{\mathbb Z}$. Assume that $p \geq h$.  If $M$ has a good $(p,r)$-filtration (Weyl $(p,r)$-filtration) such that each factor $\nabla^{(p,r)}(\la)$ ($\Delta^{(p,r)}(\la)$, respectively) has $\lambda$ being $p$-regular. Then $T_{\nu}^{\nu'}(M)$ admits a good $(p,r)$-filtration (Weyl $(p,r)$-filtration, respectively).
\end{prop}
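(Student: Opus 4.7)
By the exactness of the translation functor $T_{\nu}^{\nu'}$, applying it to a good $(p,r)$-filtration
\[
0 = M_0 \subset M_1 \subset \cdots \subset M_n = M
\]
produces a filtration of $T_{\nu}^{\nu'}(M)$ whose subquotients are the modules $T_{\nu}^{\nu'}(\nabla^{(p,r)}(\lambda_i))$. Thus it suffices to show that, for a single $p$-regular $\lambda \in X_+$, the module $T_{\nu}^{\nu'}(\nabla^{(p,r)}(\lambda))$ is either zero or of the form $\nabla^{(p,r)}(\lambda')$ for some $p$-regular $\lambda' \in X_+$. The parallel statement for Weyl $(p,r)$-filtrations then follows by applying the contragredient functor $M \mapsto {}^\tau M$, which exchanges $\nabla^{(p,r)}$ and $\Delta^{(p,r)}$ and commutes with $T_\nu^{\nu'}$.

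Fix such a $p$-regular $\lambda$ and write $\lambda = \lambda_0 + p^r\lambda_1$ with $\lambda_0 \in X_r$ and $\lambda_1 \in X_+$, so that $\nabla^{(p,r)}(\lambda) = L(\lambda_0) \otimes \nabla(\lambda_1)^{(r)}$. Since $\langle \lambda + \rho, \alpha^\vee \rangle \equiv \langle \lambda_0 + \rho, \alpha^\vee \rangle \pmod{p}$ for every positive root $\alpha$, the $p$-regularity of $\lambda$ forces the $p$-regularity of $\lambda_0$. The key step is to establish the decoupling identity
\[
T_\nu^{\nu'}\bigl(L(\lambda_0) \otimes \nabla(\lambda_1)^{(r)}\bigr) \cong T_\nu^{\nu'}(L(\lambda_0)) \otimes \nabla(\lambda_1)^{(r)}.
\]
Using the explicit description $T_\nu^{\nu'}(N) = \pr_{\nu'}\bigl(L(\tilde\nu) \otimes \pr_\nu N\bigr)$ with $\tilde\nu$ the dominant weight in $W(\nu'-\nu)$, the bound $p \geq h$ ensures that $\tilde\nu$ lies in the root lattice and all weights of $L(\tilde\nu)$ have $\ell^1$-norm strictly less than $p$, while every weight of $\nabla(\lambda_1)^{(r)}$ is divisible by $p^r$. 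Consequently, the $W_p$-linkage classes separate, and the block decomposition of the triple tensor product factors as the block decomposition of $L(\tilde\nu) \otimes L(\lambda_0)$ tensored through by $\nabla(\lambda_1)^{(r)}$.

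Granted the identity, the standard translation principle applied to the $p$-regular simple module $L(\lambda_0)$ shows that $T_\nu^{\nu'}(L(\lambda_0))$ is either zero or a simple module $L(\lambda_0')$ with $\lambda_0' \in X_r$. Here one uses that, for $p \geq h$ and $\lambda_0 \in X_r$ $p$-regular, the alcove through $\lambda_0$ lies entirely within the closure of $X_r$, so that $\lambda_0 = u \cdot \nu$ yields $\lambda_0' = u \cdot \nu' \in X_r$. Setting $\lambda' := \lambda_0' + p^r\lambda_1$, which is $p$-regular since $\lambda_0'$ is, we obtain
\[
T_\nu^{\nu'}(\nabla^{(p,r)}(\lambda)) \cong L(\lambda_0') \otimes \nabla(\lambda_1)^{(r)} = \nabla^{(p,r)}(\lambda'),
\]
as required.

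The main technical obstacle is rigorously establishing the decoupling identity above: it demands a careful analysis of how $W_p$-linkage classes interact with the $p^r$-divisible weights of the Frobenius-twisted factor. The assumption $p \geq h$ is essential here for bounding translation weights relative to the $p^r$-scale and ensuring the clean separation of block structures.
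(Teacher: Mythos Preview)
Your decoupling identity
\[
T_\nu^{\nu'}\bigl(L(\lambda_0)\otimes\nabla(\lambda_1)^{(r)}\bigr)\;\cong\;T_\nu^{\nu'}(L(\lambda_0))\otimes\nabla(\lambda_1)^{(r)}
\]
is false, and this is not merely a technical obstacle but a genuine failure of the strategy. The underlying reason is that $\lambda=\lambda_0+p^r\lambda_1$ and $\lambda_0$ need not lie in the same $W_p$-linkage class: their difference $p^r\lambda_1$ lies in $p^rX$ but not in $p\mathbb{Z}\Phi$ unless $p^{r-1}\lambda_1$ is in the root lattice. Consequently $\pr_\nu$ may annihilate $L(\lambda_0)$ while acting as the identity on $L(\lambda_0)\otimes\nabla(\lambda_1)^{(r)}$. (Your auxiliary claim that $\tilde\nu$ lies in the root lattice is likewise false in general.)

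A concrete counterexample: take $G=\mathrm{SL}_2$, $p=3$ (so $p\ge h=2$), $r=1$, $\lambda_0=\lambda_1=1$, $\lambda=4$, $\nu=0$, $\nu'=1$. Then $\nabla^{(p,1)}(4)=L(1)\otimes L(1)^{(1)}=L(4)$, and since $4=s_{\alpha,3}\cdot 0$ one has $T_0^1(L(4))=L(s_{\alpha,3}\cdot 1)=L(3)=\nabla^{(p,1)}(3)$, consistent with the proposition. But $\lambda_0=1$ lies in $W_p\cdot 1$, not $W_p\cdot 0$, so $\pr_0(L(1))=0$ and hence $T_0^1(L(1))\otimes L(1)^{(1)}=0\ne L(3)$. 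Note also that $\tilde\nu=1\notin\mathbb{Z}\Phi=2\mathbb{Z}$.

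The paper avoids this trap by never separating the two tensor factors at the level of $G$-modules. Instead it rewrites $\nabla^{(p,r)}(\lambda)\cong\operatorname{ind}_{G_rB}^G\widehat L_r(\lambda)$, invokes \cite[Lemma~3.1(b)]{CPS09} to pass $T_\nu^{\nu'}$ through the induction to the $G_rB$-translation functor $\widehat T_\nu^{\nu'}$, and then uses \cite[II.9.22]{rags} that $\widehat T_\nu^{\nu'}$ sends a simple $G_rB$-module with $p$-regular highest weight to another simple or to zero. The point is that $\widehat L_r(\lambda)$ carries the \emph{full} highest weight $\lambda$, so the translation is applied correctly. Reversing the identification then yields a module of the form $\nabla^{(p,r)}(\mu)$.
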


\begin{proof}  As observed in \cite[Section 5]{PS}, for a $p$-regular weight $\lambda$, $T_{\nu}^{\nu'}(\Delta^{(p,r)}(\la))$ is either zero or another $\Delta^{(p,r)}(\mu)$ for some $\mu \in X^+$.   We provide the details of the equivalent claim in the dual case of $\nabla^{(p,r)}(\la)$.  To do so, we also need to work in the category of $G_rB$-modules, where we have the simple module $\widehat{L}_r(\sigma) = L(\sigma)|_{G_rB}$ for a $\sigma\in X_{r}$ and the $G_rB$-translation functor $\widehat{T}_{\nu}^{\nu'}$. Write $\lambda = \lambda_0 + p^{r}\lambda_1$ for $\lambda_0 \in X_r$.  Then we have
\begin{align*}
\nabla^{(p,r)}(\lambda) &= \nabla^{(p,r)}(\lambda_0 + p\lambda_1) = L(\lambda_0)\otimes\nabla(\lambda_1)^{(r)} = L(\lambda_0)\otimes [\operatorname{ind}_{B}^{G}\lambda_1]^{(r)} \\
	&\cong \operatorname{ind}_{G_rB}^{G}(L(\lambda_0)|_{G_rB}\otimes p^{r}\lambda_1) \quad \text{(\cite[II.9.13(3)]{rags})} \\
	&= \operatorname{ind}_{G_rB}^{G}(\widehat{L}_r(\lambda_0)\otimes p^{r}\lambda_1) \\
	&\cong  \operatorname{ind}_{G_rB}^{G}(\widehat{L}_r(\lambda_0 + p^{r}\lambda_1)) \quad \text{(\cite[II.9.6(6)]{rags})}.
\end{align*}
From \cite[Lemma 3.1(b)]{CPS09}, one has
$$
T_{\nu}^{\nu'}\left(\nabla^{(p,r)}(\lambda)\right) = T_{\nu}^{\nu'}\left(\operatorname{ind}_{G_rB}^G\left(\widehat{L}_r(\lambda_0 + p^{r}\lambda_1)\right)\right) 
	= \operatorname{ind}_{G_rB}^{G}\left(\widehat{T}_{\nu}^{\nu'}\left(\widehat{L}_r(\lambda_0 + p^{r}\lambda_1)\right)\right).
$$
Under the assumption that $\lambda$ is $p$-regular, $\widehat{T}_{\nu}^{\nu'}$ takes a simple to another simple or zero (cf. \cite[II.9.22]{rags}.  So, reversing the argument above, one obtains some $\nabla^{(p,r)}(\mu)$ or zero. Since translation functors are exact, the filtration claims follow.
\end{proof}


\section{New General Techniques Involving Truncated Categories} \label{S:weightThms}

\subsection{}\label{S:JantzenGeneralization}

In this section we give a generalization of Jantzen's classic result on lifting of the $G_1$-PIMs when $p \ge 2h-2$.  This method of proof is different. It is what one might call ``weight-based," in that it primarily relies upon weight combinatorics.  Such methods break down for smaller primes, as Jantzen notes.  Nonetheless, in at least one specific small prime case dealt with in this paper, this general method is useful, and it also gives important information when dealing with higher Frobenius kernels.

For a weight $\tau \in X_+$, we denote by $\textup{Mod}(\tau)$ the truncated subcategory of all finite-dimensional rational $G$-modules whose highest weights are less than or equal to $\tau$.

\begin{prop}\label{P:injproj}
For $r \geq 1,$ let $\la_0, \sigma \in X_r$ and $\la_1 \in X_+$ with $\la = \la_0 + p^r \la_1 \leq \sigma$. Suppose one of the following conditions holds.
\begin{itemize} 
\item[(a)]  Assume that all weights $\eta \in X_+$ satisfy 
\begin{equation}\label{E:injproj}
\text{If } p^r \eta \leq -w_0\sigma + \la_0, \text{ then }\Ext_G^1(L(-w_0\la_1), L(\eta)) =0.
\end{equation}
\item[(b)]  Let $d$ be the smallest integer such that $\langle \eta, \alpha_0^{\vee}\rangle=d$ and $\Ext_G^1(L(-w_0\la_1), L(\eta)) \ne 0$ for $\eta \in X_+$.  Assume
$$p^r\cdot d> \langle-w_0\sigma + \la_0,\a_0^{\vee}\rangle.$$
\end{itemize} 
Then $\St_r \otimes L(\la)$ is both injective and projective in $\textup{Mod}((p^r-1)\rho+\sigma)$.
\end{prop}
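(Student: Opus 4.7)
The plan is to establish that $\St_r \otimes L(\la)$ is injective in $\Mod((p^r-1)\rho+\sigma)$; by $\tau$-duality (which fixes $\St_r$ and $L(\la)$, hence their tensor product, and interchanges injectives with projectives), this automatically yields projectivity. Equivalently, it suffices to show that $\Ext^1_G(L(\mu), \St_r \otimes L(\la)) = 0$ for every dominant $\mu \leq (p^r-1)\rho + \sigma$.

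My first step is a cohomological reduction. Since $\St_r$ is both projective and injective over $G_r$, the module $\St_r \otimes L(\la)$ is $G_r$-injective, so the Lyndon--Hochschild--Serre spectral sequence for $G_r \triangleleft G$ collapses to
$$
\Ext^1_G(L(\mu), \St_r \otimes L(\la)) \cong H^1\bigl(G/G_r,\,\Hom_{G_r}(L(\mu), \St_r \otimes L(\la))\bigr).
$$
Writing $\mu = \mu_0 + p^r\mu_1$ and $\la = \la_0 + p^r\la_1$ via Steinberg's tensor product theorem and isolating the $G_r$-trivial Frobenius-twisted factors produces a $G$-module $U$ with $U^{(r)} \cong \Hom_{G_r}(L(\mu_0), \St_r \otimes L(\la_0))$ and
$$
\Hom_{G_r}(L(\mu), \St_r \otimes L(\la)) \cong \bigl(U \otimes L(\la_1) \otimes L(-w_0\mu_1)\bigr)^{(r)}.
$$
Applying the Frobenius equivalence $H^1(G/G_r, V^{(r)}) \cong H^1(G, V)$ and tensor-hom adjunction, the target vanishing becomes
$$
\Ext^1_G\bigl(L(-w_0\la_1),\, U \otimes L(-w_0\mu_1)\bigr) = 0.
$$

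Next I would filter $U \otimes L(-w_0\mu_1)$ by its $G$-composition factors $L(\eta)$ and reduce to showing $\Ext^1_G(L(-w_0\la_1), L(\eta)) = 0$ for each. Since $U^{(r)} = [L(-w_0\mu_0) \otimes L(\la_0) \otimes \St_r]^{G_r}$ has $T$-weights confined to $p^r X$ (a necessary consequence of $G_r$-invariance), combined with the hypothesis $\mu \leq (p^r-1)\rho + \sigma$, one aims to verify the sharp bound $p^r\eta \leq -w_0\sigma + \la_0$; hypothesis~(a) then supplies the required vanishing. For the implication (b)$\Rightarrow$(a), dominance of $\a_0$ gives $\langle \a_i, \a_0^\vee\rangle \geq 0$ for each simple $\a_i$, and hence $\langle \gamma, \a_0^\vee\rangle \geq 0$ for all $\gamma \in \Phi^+$; so pairing with $\a_0^\vee$ is monotone along the dominance order. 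Thus $p^r\eta \leq -w_0\sigma + \la_0$ forces $p^r\langle\eta,\a_0^\vee\rangle \leq \langle-w_0\sigma + \la_0,\a_0^\vee\rangle < p^r d$, whence $\langle \eta, \a_0^\vee\rangle < d$, and the minimality of $d$ in~(b) supplies $\Ext^1_G(L(-w_0\la_1), L(\eta)) = 0$.

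The main obstacle lies in extracting the sharp weight bound $p^r\eta \leq -w_0\sigma + \la_0$. A naive top-weight estimate yields only the weaker $p^r\eta \leq (p^r-1)\rho + \la_0 - w_0\mu$, surrendering roughly $(p^r-1)\rho$ of slack against the hypothesis. Closing this gap requires exploiting the $p^r$-divisibility of weights of $U^{(r)}$ beyond a coarse top-weight inequality, together with finer structural information on the $G_rT$-socle of $\St_r \otimes L(\la_0)$. This combinatorial refinement is the technical heart of the argument.
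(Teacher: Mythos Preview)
Your reduction via the Lyndon--Hochschild--Serre spectral sequence, the identification of $\Hom_{G_r}$, and the passage from injectivity to projectivity are all correct and in line with the paper. The implication (b)$\Rightarrow$(a) via monotonicity of $\langle -,\a_0^\vee\rangle$ along the dominance order is also fine. But you explicitly leave the key step unfinished: obtaining the sharp bound $p^r\eta \le -w_0\sigma+\la_0$ for the composition factors $L(\eta)$ of $U\otimes L(-w_0\mu_1)$. Your gesture toward ``$p^r$-divisibility'' and ``finer structural information on the $G_rT$-socle'' does not name the mechanism that closes the gap, so as written the argument is incomplete.

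The paper avoids this difficulty by dualizing at the outset. One rewrites
\[
\Ext^1_G(L(\mu),\St_r\otimes L(\la)) \;\cong\; \Ext^1_G\bigl(\St_r\otimes L(-w_0\la_1)^{(r)},\,L(\mu)^*\otimes L(\la_0)\bigr)
\]
and invokes \cite[II.10.4]{rags}: the only composition factors $L(\gamma)$ of $L(\mu)^*\otimes L(\la_0)$ that can contribute are those of the form $\St_r\otimes L(\eta)^{(r)}$. Such a factor has highest weight $(p^r-1)\rho+p^r\eta\le -w_0\mu+\la_0\le (p^r-1)\rho - w_0\sigma+\la_0$, so $p^r\eta\le -w_0\sigma+\la_0$ drops out immediately---the missing $(p^r-1)\rho$ is absorbed because the relevant factor already carries a Steinberg tensorand. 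Your route can in fact be salvaged by the same idea: rewrite $U^{(r)}\cong\Hom_{G_r}(\St_r,L(-w_0\mu_0)\otimes L(\la_0))$ and use exactness of $\Hom_{G_r}(\St_r,-)$ (projectivity of $\St_r$ over $G_r$) to see that the $G$-composition factors of $U$ are exactly the $L(\gamma_1)$ with $\St_r\otimes L(\gamma_1)^{(r)}$ occurring in $L(-w_0\mu_0)\otimes L(\la_0)$, whence $p^r\gamma_1\le -w_0\mu_0+\la_0-(p^r-1)\rho$ and the sharp bound follows after tensoring with $L(-w_0\mu_1)$. But this is precisely the paper's argument in different packaging; the dualized form makes the bound transparent without the detour through $U$.
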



\begin{proof} (a) Let $\mu \in X_+$ be such that $\mu \le (p^r-1)\rho + \sigma$.  In order to prove the injectivity, it suffices to show that 
$\Ext_G^1(L(\mu),\St_r \otimes L(\la))\cong \Ext_G^1(L(\mu),\St_r \otimes L(\la_0)\otimes L(\la_1)^{(r)}) $ vanishes. Note that
$$ \Ext_G^1(L(\mu),\St_r \otimes L(\la_0)\otimes L(\la_1)^{(r)})  \cong\Ext_G^1(\St_r \otimes L(-w_0\la_1)^{(r)}, L(\mu)^* \otimes L(\la_0))$$
by \cite[I.4.4]{rags} and the self-duality of $\St_r$.  Applying \cite[II.10.4]{rags}, the only composition factors of $L(\mu)^* \otimes L(\la_0)$ that can be extended by $\St_r \otimes L(-w_0\la_1)^{(r)}$ must have the form $\St_r \otimes L(\eta)^{(r)}$. Using the five term exact sequence in the Lyndon-Hochschild-Serre spectral sequence, we have
\begin{eqnarray*}
\Ext_G^1(\St_r\otimes L(-w_0\la_1)^{(r)}, \St_r \otimes L(\eta)^{(r)}) &\cong& \Ext_{G/G_{r}}^1( L(-w_0\la_1)^{(r)}, L(\eta)^{(r)})\\
&\cong& \Ext_{G}^1( L(-w_0\la_1), L(\eta)). 
\end{eqnarray*}
The simple module $\St_r \otimes L(\eta)^{(r)}$ is a composition factor of $L(\mu)^* \otimes L(\la_0)$, thus $(p^r - 1)\rho + p^r\eta \leq -w_0\mu + \lambda_0$.  
Since $\mu \le (p^r-1)\rho+\sigma$, any such $\eta$ must satisfy
$$(p^r-1)\rho+p^r\eta \le (p^r-1)\rho-w_0\sigma + \la_0$$
which implies  $$p^r\eta \le -w_0\sigma + \la_0.$$
Now Condition (\ref{E:injproj}) guarantees vanishing of  $\Ext_G^1(L(-w_0\la_1), L(\eta))$ and hence also of $\Ext_G^1(L(\mu),\St_r \otimes L(\la))$.
Therefore, $\St_r \otimes L(\la)$ is an injective object in $\textup{Mod}((p^r-1)\rho+\la)$.  By the isomorphism
$$\Ext_G^1(\St_r \otimes L(\la),L(\mu)) \cong \Ext_G^1(L(\mu),\St_r \otimes L(\la))=0,$$
it follows that $\St_r \otimes L(\la)$ is also a projective object in $\textup{Mod}((p^r-1)\rho+\sigma)$.

(b) Let $d$ be the smallest integer such that $\langle \eta, \alpha_0^{\vee}\rangle=d$ and $\Ext_G^1(L(-w_0\la_1, L(\eta)) \ne 0$ for $\eta \in X_+$. 
The assumption $p^r\cdot d> \langle-w_0\sigma + \la_0,\a_0^{\vee}\rangle$ forces  $p^r \eta \nleq -w_0\sigma + \la_0$, and the result follows from part (a). 
\end{proof}

\subsection{} Observe that if $\St_r\otimes L(\la)$ is both injective and projective in $\textup{Mod}((p^r-1)\rho+\la)$, then it is necessarily tilting.  To see this, recall the general fact that, for a $G$-module $M$, if $\Ext_G^1(\Delta(\mu),M) \neq 0$, then $M$ must have a composition factor $L(\ga)$ with $\ga > \mu$.   Consider $\Ext_G^1(\Delta(\mu),\St_r\otimes L(\la))$ for an arbitrary dominant weight $\mu$.   Since the highest weight of $\St_r\otimes L(\la)$ is $(p^r-1)\rho + \la$, by the aforementioned observation, this Ext-group can only be nonzero if $\Delta(\mu) \in \textup{Mod}((p^r-1)\rho+\la)$.  However, in that case, the Ext-group vanishes by the injectivity assumption.   Hence, $\St_r\otimes L(\la)$ has a good filtration. By a dual argument with $\nabla(\mu)$, one sees that it also admits a Weyl filtration and hence is tilting.  
The next result shows that under certain injectivity/projectivity conditions in the truncated category, one can explicitly identify tilting modules restricted over $G_rT$ with specific projective indecomposable 
modules. 

\begin{prop}\label{P:IfIPTMC}
For $r \geq 1,$ let $\la_0, \sigma \in X_r$ and $\la_1 \in X_+$ with $\la = \la_0 + p^r \la_1 \leq \sigma$.   If  $\St_r \otimes L(\la_0)$  is injective and projective in $\textup{Mod}((p^r-1)\rho+\la_0)$ and $\St_r \otimes L(\la)$ is injective and projective in $\textup{Mod}((p^r-1)\rho+\sigma)$, then
$$T((p^r-1)\rho+\la) \mid_{G_rT} \cong \widehat{Q}_r((p^r-1)\rho+w_0\la)\otimes L(\la_1)^{(r)}$$
as a $G_rT$-module.
\end{prop}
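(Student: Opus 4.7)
The plan is to exploit the observation made immediately before the proposition that a module which is both injective and projective in a truncated category is tilting. Applying this to each of the two hypotheses in turn, both $\St_r \otimes L(\la_0)$ and $\St_r \otimes L(\la)$ are tilting $G$-modules.

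Since $\St_r \otimes L(\la_0)$ is tilting with top weight $(p^r-1)\rho + \la_0$ of multiplicity one, the framework of \cite{KN} and \cite[Theorem 1.2.1]{So} yields the TMC for $\la_0$; moreover, the $G_r$-indecomposability of $Q_r(\la_0)$ upgrades this to an identification $\St_r \otimes L(\la_0) \cong T((p^r-1)\rho + \la_0)$ of $G$-modules, and in particular $T((p^r-1)\rho + \la_0)|_{G_rT} \cong \widehat{Q}_r((p^r-1)\rho + w_0\la_0)$.

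Steinberg's tensor product theorem then gives
\[
\St_r \otimes L(\la) \cong \St_r \otimes L(\la_0) \otimes L(\la_1)^{(r)} \cong T((p^r-1)\rho + \la_0) \otimes L(\la_1)^{(r)},
\]
and the top-weight vector exhibits $T((p^r-1)\rho + \la)$ as an indecomposable direct summand of this tilting module. Jantzen's Lemma E.9 \cite[Lemma E.9]{rags} supplies the identity $T((p^r-1)\rho + \la_0) \otimes T(\la_1)^{(r)} \cong T((p^r-1)\rho + \la)$; a dimension comparison (using $\dim T(\la_1) \geq \dim L(\la_1)$) then forces $L(\la_1) = T(\la_1)$, so that the displayed tensor product is itself indecomposable and equals $T((p^r-1)\rho + \la)$. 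Restricting both sides to $G_rT$ and applying the TMC for $\la_0$ yields the asserted isomorphism.

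The main anticipated obstacle is the dimension-forcing step that collapses $L(\la_1)$ onto $T(\la_1)$; a priori the tensor product $T((p^r-1)\rho + \la_0) \otimes L(\la_1)^{(r)}$ need not be indecomposable, and only the interplay of the two hypotheses---projective-and-injective for both $\St_r \otimes L(\la_0)$ and $\St_r \otimes L(\la)$---is what makes this collapse possible. Once this is in place, the final restriction to $G_rT$ is a direct consequence of the TMC for $\la_0$ together with Steinberg's tensor product theorem.
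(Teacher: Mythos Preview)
Your argument has a genuine gap at the second paragraph: the claim that $\St_r \otimes L(\la_0) \cong T((p^r-1)\rho + \la_0)$ as $G$-modules is false in general, and neither the $G_r$-indecomposability of $Q_r(\la_0)$ nor the references \cite{KN}, \cite{So} yields it. Those references give the TMC only once the tilting hypothesis is known for \emph{all} restricted weights, not for a single $\la_0$; and even when the TMC does hold for $\la_0$, the module $\St_r\otimes L(\la_0)$ typically has several indecomposable tilting summands, only one of which is $T((p^r-1)\rho+\la_0)$. For a concrete counterexample take $G=\mathrm{SL}_2$, $p=3$, $r=1$, $\la_0=2$: here $\St_1\otimes L(2)=\St_1\otimes\St_1\cong T(4)\oplus T(2)$, and the hypotheses of the proposition are satisfied (indeed $p\ge 2h-2$). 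Since your identification fails, so does the displayed equation $\St_r\otimes L(\la)\cong T((p^r-1)\rho+\la_0)\otimes L(\la_1)^{(r)}$, and the subsequent dimension-forcing argument that would yield $L(\la_1)=T(\la_1)$ no longer goes through: the genuine inequality is only $\dim T((p^r-1)\rho+\la) \le \dim\bigl(\St_r\otimes L(\la_0)\bigr)\cdot\dim L(\la_1)$, which is too weak.

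The paper's proof avoids this by never asserting that $\St_r\otimes L(\la_0)$ is indecomposable. Instead it singles out the injective hull $I$ of $L((p^r-1)\rho+w_0\la_0)$ in the truncated category, observes (via a one-dimensional Hom computation) that $I$ occurs exactly once as a $G$-summand of $\St_r\otimes L(\la_0)$, and then argues that $I$ has simple $G_r$-socle, forcing $I\cong Q_r((p^r-1)\rho+w_0\la_0)$ over $G_r$ and $I\cong T((p^r-1)\rho+\la_0)$ over $G$. For $\la_1\neq 0$ the paper then shows directly that $T((p^r-1)\rho+\la_0)\otimes L(\la_1)^{(r)}$ has simple $G_r$-socle (hence simple $G$-socle), so it is an indecomposable tilting summand of $\St_r\otimes L(\la)$ with the correct highest weight; no appeal to $L(\la_1)=T(\la_1)$ or to Lemma~E.9 is needed. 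To repair your argument you would need to work with this summand $I$ rather than with all of $\St_r\otimes L(\la_0)$.
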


\begin{proof}
Note that $\la \leq \sigma$ implies that $\St_r \otimes L(\la)$ is also injective and projective in 
$\textup{Mod}((p^r-1)\rho+\la).$ We will first discuss the case $\la_1=0.$
Let $I$ denote the injective hull of $L((p^r-1)\rho+w_0\la)$ in the truncated category $\textup{Mod}((p^r-1)\rho+\la)$.  Observe that
$$\Hom_G(L((p^r-1)\rho+w_0\la_0),\St_r \otimes L(\la_0)) \cong \Hom_{G_r}(L((p^r-1)\rho+w_0\la),\St_r \otimes L(\la_0)) \cong k.$$
Therefore, the injective $I$ appears exactly once as an indecomposable $G$-summand of $\St_r\otimes L(\la_0)$.  If one views $I$ as a $G_r$-module, it is a direct sum of some $Q_r(\gamma)$s.  
However, the $G$ and $G_r$-socle of $I$ are both simple.  It follows that $I \cong Q_r((p^r-1)\rho+w_0\la_0)$.  But, since $\St_r \otimes L(\la_0)$ is tilting, the injective hull $I$ is also an indecomposable tilting module.  It contains the highest weight $(p^r-1)\rho+\la_0$.  Hence, $Q_r((p^r-1)\rho+w_0\la_0) \cong I \cong T((p^r-1)\rho+\la_0)$.

If $\la_1\neq 0$ one can write $\St_r \otimes L(\la) \cong \St_r \otimes L(\la_0) \otimes L(\la_1)^{(r)}.$ From the earlier discussion, it follows that the tensor product has  $T((p^r-1)\rho+\la_0) \otimes L(\la_1)^{(r)}$ as a $G$-summand. This module (being a summand of a tilting module) is tilting and has simple $G$-socle $L(\la).$ It is therefore isomorphic to $I$ and as a $G_rT$-module isomorphic to  $\widehat{Q}_r((p^r-1)\rho+w_0\la)\otimes L(\la_1)^{(r)}$.
\end{proof}

\subsection{TMC Criterion and Jantzen's result} Combining the propositions from the preceding subsections, we obtain a criterion for the validity of the Tilting Module Conjecture.  

\begin{theorem}\label{T:pairingbound} Let $G$ be a semisimple, simply connected algebraic group scheme defined and split over ${\mathbb F}_{p}$. Suppose that for 
every $\eta \in X_+$ with $\textup{Ext}_G^1(k,L(\eta)) \ne 0$, one has  
$$p\langle \eta, \alpha_0^{\vee} \rangle > 2(p-1)(h-1).$$ 
Then 
$$T((p-1)\rho + \la)|_{G_{1}T}\cong \widehat{Q}_1((p-1)\rho+w_0\la)$$ 
for all $\la \in X_1$, and the TMC holds for $G$.
\end{theorem}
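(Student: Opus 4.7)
The plan is to reduce to the case $r=1$ via Proposition~\ref{P:equivTMCr=1} and then directly combine Propositions~\ref{P:IfIPTMC} and~\ref{P:injproj}(b), specialized to a trivial twisted part. Fix $\lambda \in X_1$ and apply Proposition~\ref{P:IfIPTMC} with $r=1$, $\lambda_0 := \lambda$, $\lambda_1 := 0$, and $\sigma := \lambda$. Here $\lambda = \lambda_0 = \sigma$, so $\lambda \leq \sigma$ holds, $L(-w_0\lambda_1) = k$, and $L(\lambda_1)^{(1)}$ is trivial, so the conclusion of that proposition would read exactly
$$T((p-1)\rho + \lambda)|_{G_1T} \cong \widehat{Q}_1((p-1)\rho + w_0\lambda),$$
as required.

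The substantive step is verifying the hypothesis of Proposition~\ref{P:IfIPTMC}, namely that $\St_1 \otimes L(\lambda)$ is both injective and projective in $\textup{Mod}((p-1)\rho + \lambda)$. For this I would invoke Proposition~\ref{P:injproj}(b) with the same data. Since $\langle \rho, \alpha_0^\vee \rangle = h-1$ and each restricted weight is a sum $\sum c_i\omega_i$ with $0 \leq c_i \leq p-1$, every $\mu \in X_1$ satisfies $\langle \mu, \alpha_0^\vee \rangle \leq (p-1)(h-1)$. Applying this to both $\lambda$ and $-w_0\lambda$ (which also lies in $X_1$) yields
$$\langle -w_0\sigma + \lambda_0, \alpha_0^\vee \rangle = \langle -w_0\lambda + \lambda, \alpha_0^\vee \rangle \leq 2(p-1)(h-1).$$
By the hypothesis of the theorem, every $\eta \in X_+$ with $\textup{Ext}_G^1(k,L(\eta)) \neq 0$ satisfies $p\langle \eta, \alpha_0^\vee\rangle > 2(p-1)(h-1)$, so the minimal integer $d$ appearing in Proposition~\ref{P:injproj}(b) (with $\lambda_1 = 0$, so that $L(-w_0\lambda_1) = k$) satisfies $p\cdot d > \langle -w_0\sigma + \lambda_0, \alpha_0^\vee \rangle$. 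The hypothesis of Proposition~\ref{P:injproj}(b) is therefore met.

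Proposition~\ref{P:injproj}(b) then yields the required injectivity and projectivity of $\St_1 \otimes L(\lambda)$ in $\textup{Mod}((p-1)\rho + \lambda)$, and Proposition~\ref{P:IfIPTMC} delivers the desired $G_1T$-isomorphism for every $\lambda \in X_1$. Since the hypothesis does not involve $r$, Proposition~\ref{P:equivTMCr=1} then extends the TMC to all $r \geq 1$. The only conceptual point worth emphasizing is that $2(p-1)(h-1)$ is precisely the extremal value of $\langle -w_0\mu + \mu, \alpha_0^\vee\rangle$ over $\mu \in X_1$ (attained at $\mu = (p-1)\rho$), so a single uniform hypothesis on $\eta$ covers all restricted weights simultaneously; there is no substantive obstacle here, only bookkeeping of the two preceding propositions.
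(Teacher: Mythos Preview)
Your proof is correct and follows essentially the same approach as the paper: specialize Propositions~\ref{P:injproj}(b) and~\ref{P:IfIPTMC} with $r=1$, $\lambda_0=\sigma=\lambda$, $\lambda_1=0$, and then invoke Proposition~\ref{P:equivTMCr=1}. You have merely spelled out the easy weight estimate $\langle -w_0\lambda+\lambda,\alpha_0^{\vee}\rangle \le 2(p-1)(h-1)$ that the paper leaves implicit.
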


\begin{proof}   Given $\la \in X_1$, apply Proposition~\ref{P:injproj}(b) with $\la_1=0$ and $\la_0=\sigma \in X_1$. The first claim then follows from Proposition~\ref{P:IfIPTMC}, and hence the TMC holds for $r = 1$.
As observed in Proposition \ref{P:equivTMCr=1}, if the TMC holds for $r = 1$, then it holds for all $r$. 
\end{proof}

With the preceding theorem, we can now recover Jantzen's theorem that verifies the TMC for $p\geq 2h-2$.  

\begin{cor} \label{C:JantzenCor}
If $p \ge 2h-2$, then the TMC holds for $G$.
\end{cor}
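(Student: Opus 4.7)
The strategy is to invoke Theorem~\ref{T:pairingbound}: under the hypothesis $p \geq 2h-2$, I need to verify that every $\eta \in X_+$ with $\Ext_G^1(k, L(\eta)) \neq 0$ satisfies
\[
p\langle \eta, \alpha_0^\vee\rangle > 2(p-1)(h-1).
\]
Since $\Ext_G^1(k,k) = 0$, only $\eta \neq 0$ needs to be handled, and the linkage principle forces any such $\eta$ into the principal block, i.e., $\eta \in W_p \cdot 0$.

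The key input is the following weight-lattice estimate: every nonzero dominant weight in $X_+ \cap W_p \cdot 0$ satisfies $\langle \eta, \alpha_0^\vee\rangle \geq 2(h-1)$. I would establish this by reflecting $0$ across the upper wall of the fundamental alcove (the affine hyperplane $\langle \cdot + \rho, \tilde\alpha^\vee\rangle = p$) to obtain the ``closest'' nonzero dominant dot-orbit representative $s_{\tilde\alpha,p} \cdot 0 = (p-h+1)\tilde\alpha$, and then verifying, by a short root-system calculation (type-by-type if needed), that this realises the minimum of $\langle \cdot, \alpha_0^\vee\rangle$ over the nonzero dominant part of the orbit, with value $(p-h+1)\langle \tilde\alpha, \alpha_0^\vee\rangle \geq 2(p-h+1)$. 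Under $p \geq 2h-2$ this is at least $2(h-1)$, as required.

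Granting the bound, the conclusion is purely arithmetic:
\[
p\langle \eta, \alpha_0^\vee\rangle \;\geq\; 2p(h-1) \;=\; 2(p-1)(h-1) + 2(h-1) \;>\; 2(p-1)(h-1),
\]
so the hypothesis of Theorem~\ref{T:pairingbound} is met and the TMC for $G$ follows.

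The main obstacle is the weight bound $\langle \eta, \alpha_0^\vee \rangle \geq 2(h-1)$; once in hand, everything else is formal. The delicate point is that in non-simply laced types one must distinguish carefully between the highest short root $\alpha_0$ and the highest long root $\tilde\alpha$ when verifying that $(p-h+1)\tilde\alpha$ actually minimises the relevant pairing across the dominant part of the orbit. Alternatively, this estimate can be extracted from the standard description of the principal block in \cite{rags}.
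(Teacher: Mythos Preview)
Your approach is essentially the same as the paper's: both feed the bound $\langle \eta,\alpha_0^{\vee}\rangle \ge 2(p-h+1)$ for nonzero $\eta$ in the principal block into Theorem~\ref{T:pairingbound}, after which the arithmetic is identical. The paper simply cites \cite[Prop.~2.3.1(b)]{KN} for this bound, whereas you sketch the alcove-geometry derivation (which is exactly what the paper itself does later in Lemma~\ref{L:2h-31} for $p=2h-3$).

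One correction to your sketch: the upper wall of the fundamental alcove is $\langle\,\cdot+\rho,\alpha_0^{\vee}\rangle=p$ with $\alpha_0$ the highest \emph{short} root (since $\alpha_0^{\vee}$ is the highest coroot), not $\tilde{\alpha}^{\vee}$. Hence the reflected weight is $s_{\alpha_0,p}\cdot 0=(p-h+1)\alpha_0$, and $\langle(p-h+1)\alpha_0,\alpha_0^{\vee}\rangle=2(p-h+1)$ directly, with no type-by-type check needed---this resolves precisely the ``delicate point'' you flag.
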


\begin{proof} Suppose $\eta \in X_{+}$ with $\Ext^1_{G}(k,L(\eta)) \neq 0$.  As shown in \cite[Proposition 2.3.1(b)]{KN}, one always has $\langle\eta,\a_0^{\vee}\rangle \ge 2(p-h+1)$, so that
$$
p\langle\eta,\a_0^{\vee}\rangle \geq 2p(p-h + 1) \geq 2p(2h -2 -h + 1) = 2p(h-1) > 2(p-1)(h-1)
$$
and the claim follows from Theorem~\ref{T:pairingbound}.
\end{proof}

\subsection{\bf General  Observation for $p=2h-3$} For this section set $r=1$. 
We start out with two general observations for any group $G$ having the property that $2h-3$ is a prime.

\begin{lemma}\label{L:2h-31} Let $p=2h-3.$ If $0 \neq \mu \in X_+ \cap W_{p}\cdot 0$, then $(h-2)\alpha_0 \leq \mu$ and 
$2(h-2) \leq \langle \mu, \alpha_0^{\vee} \rangle.$ 
\end{lemma}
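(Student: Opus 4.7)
The plan is to prove the dominance bound $\mu\geq(h-2)\alpha_0$ first, then obtain the pairing inequality $\langle\mu,\alpha_0^\vee\rangle\geq 2(h-2)$ as a corollary. For the corollary, once $\mu-(h-2)\alpha_0\in\mathbb{Z}_{\geq 0}\Phi^+$ is established, pairing with $\alpha_0^\vee$ preserves non-negativity because $\alpha_0$ is dominant and hence $\langle\alpha_i,\alpha_0^\vee\rangle\geq 0$ for every simple root $\alpha_i$; combined with $\langle(h-2)\alpha_0,\alpha_0^\vee\rangle=2(h-2)$ this yields the bound.

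For the dominance statement I plan to use the geometry of alcoves in $X\otimes\mathbb{R}$. Under the hypothesis, $p=2h-3\geq h$ (since $h\geq 3$), so $\rho$ lies in the interior of the unique alcove containing it (call it $C$), and consequently $W_p$ acts simply transitively on alcoves. Write $\mu+\rho=w\rho$ for the unique $w\in W_p$; then $\mu+\rho$ lies in the interior of a single alcove $A=wC$. Since $\mu\in X_+$, $\mu+\rho$ is strictly dominant, so $A$ lies in the open dominant chamber. The case $A=C$ forces $\mu=0$, so $A\neq C$. I will trace the straight line segment $L(t)=\rho+t\mu$ for $t\in[0,1]$. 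Dominance of $\mu$ keeps $L$ in the strictly dominant chamber, so every affine hyperplane it meets has the form $H_{\alpha,mp}$ with $\alpha\in\Phi^+$ and $m\geq 1$.

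The crux of the argument is to show that $H_{\alpha_0,p}$ is the first hyperplane crossed by $L$. The segment meets $H_{\alpha,mp}$ at parameter $t_{\alpha,m}=(mp-\langle\rho,\alpha^\vee\rangle)/\langle\mu,\alpha^\vee\rangle$ when $\langle\mu,\alpha^\vee\rangle>0$. Using $\langle\rho,\alpha^\vee\rangle\leq h-1$ together with $\langle\mu,\alpha^\vee\rangle\leq\langle\mu,\alpha_0^\vee\rangle$ for dominant $\mu$ and $\alpha\in\Phi^+$ — the latter because $\alpha_0^\vee-\alpha^\vee$ expands non-negatively in simple coroots, $\alpha_0^\vee$ being the highest root of $\Phi^\vee$ — I obtain $t_{\alpha,m}\geq t_{\alpha_0,1}=(h-2)/\langle\mu,\alpha_0^\vee\rangle$. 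Hence $L$ first enters the alcove $s_0C$ by crossing $H_{\alpha_0,p}$, and the unique orbit point of $W_p\rho$ in $s_0C$ is $s_0\rho=\rho+(h-2)\alpha_0$.

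If $\mu+\rho\in s_0C$, then $\mu+\rho=s_0\rho$ and $\mu=(h-2)\alpha_0$, giving equality. Otherwise $L$ continues through a chain of alcoves $A^{(1)}=s_0C,A^{(2)},\dots,A^{(d)}=A$, and at each subsequent crossing of a wall $H_{\alpha_j,m_jp}$ (with $\alpha_j\in\Phi^+$, $m_j\geq 1$) the orbit representative jumps by $x^{(j)}-x^{(j-1)}=(m_jp-\langle x^{(j-1)},\alpha_j^\vee\rangle)\alpha_j$. The coefficient is a strictly positive integer, since $L$ passes from the $\langle\cdot,\alpha_j^\vee\rangle<m_jp$ side to the $>m_jp$ side and all the pairings are integral. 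Telescoping yields $\mu+\rho=\rho+(h-2)\alpha_0+\sum_{j\geq 2}c_j\alpha_j$ with each $c_j\geq 1$, so $\mu\geq(h-2)\alpha_0$. The main obstacle is the ratio comparison that identifies $H_{\alpha_0,p}$ as the first wall crossed; the remainder is bookkeeping about the successive reflections.
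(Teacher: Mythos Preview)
Your argument is correct and reaches the same conclusion as the paper, but it is far more elaborate. The paper's proof is a single line: it simply observes that any nonzero $\mu\in X_+\cap W_p\cdot 0$ satisfies $\mu\geq s_{\alpha_0,p}\cdot 0=(p-(h-1))\alpha_0=(h-2)\alpha_0$, and then pairs with $\alpha_0^\vee$. This invokes without comment the standard fact that $s_{\alpha_0,p}\cdot 0$ is the unique minimal nonzero dominant weight linked to $0$ (equivalently, that the only wall of the fundamental alcove interior to the dominant chamber is $H_{\alpha_0,p}$). What you have done is essentially reprove that standard fact from scratch via an explicit gallery argument: the ratio comparison $t_{\alpha,m}\geq t_{\alpha_0,1}$ and the telescoping of reflections across the gallery are precisely a hands-on verification that the first step out of $C$ toward any other dominant alcove must cross $H_{\alpha_0,p}$ and contribute exactly $(h-2)\alpha_0$, with all further steps contributing non-negatively. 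Your derivation of the pairing inequality from the dominance inequality is the same as the paper's.

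One small technical caveat: the straight line $L(t)=\rho+t\mu$ may pass through a face of codimension $\geq 2$ in the hyperplane arrangement, in which case the ``chain of alcoves along $L$'' is not literally well-defined. You can bypass this by replacing the line with any minimal gallery from $C$ to $A$: since $H_{\alpha_i,0}$ does not separate $C$ from $A$, the first wall crossed must be a wall of $C$ other than these, hence is $H_{\alpha_0,p}$; and every wall crossed is some $H_{\alpha_j,m_jp}$ with $\alpha_j\in\Phi^+$, $m_j\geq 1$, so your telescoping bound goes through unchanged. With that adjustment the argument is complete.
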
 
\begin{proof}
Any dominant non-zero weight $\mu$  in $X_+ \cap W_{p}\cdot 0$ satisfies $$\mu \geq s_{\alpha_0,p}\cdot 0 = (p-(h-1))\alpha_0= (h-2)\alpha_0.$$
The second statement follows by taking the inner product of both sides with $\alpha_{0}$.
\end{proof}
\begin{lemma}\label{L:2h-32} Let $p=2h-3,$  $\la_0, \sigma \in X_1$, and $\la_1 \in X_+$ with $\la = \la_0 + p \la_1 \leq \sigma.$ If  $ \langle \la + \sigma, \alpha_0^{\vee} \rangle < 2p(h-2)$ 
then 
\begin{itemize}
\item[(a)]
$\St_1 \otimes L(\la)$  is projective and injective in $\textup{Mod}((p-1)\rho+\sigma).$

\item[(b)] $T((p-1)\rho+\la) \cong T((p-1)\rho+\la_0) \otimes L(\la_1)^{(1)} $ is the injective hull and projective cover of $L((p-1)\rho+w_0\la_0) \otimes L(\la_1)^{(1)}$ in $\textup{Mod}((p-1)\rho+\sigma).$

\item[(c)]  $T((p-1)\rho+\la)|_{G_{1}T}\cong \widehat{Q}_1((p-1)\rho+w_0 \la)  \otimes L(\la_1)^{(1)}$  as a $G_1T$-module.
\end{itemize}
\end{lemma}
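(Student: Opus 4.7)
The plan is to derive part (a) by verifying the hypothesis of \cref{P:injproj}(b), and once (a) is in hand, to deduce parts (b) and (c) directly from \cref{P:IfIPTMC}.

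For part (a), let $d$ be the smallest value of $\langle \eta,\alpha_0^\vee\rangle$ as $\eta$ ranges over $X_+$ with $\Ext_G^1(L(-w_0\lambda_1),L(\eta))\neq 0$ (if no such $\eta$ exists, \cref{P:injproj}(a) already delivers the conclusion vacuously). Because the highest short root $\alpha_0$ is fixed by $-w_0$, one has $\langle -w_0\sigma+\lambda_0,\alpha_0^\vee\rangle = \langle \sigma+\lambda_0,\alpha_0^\vee\rangle$, and the hypothesis $\langle \lambda+\sigma,\alpha_0^\vee\rangle<2p(h-2)$ together with $\lambda=\lambda_0+p\lambda_1$ rearranges to $\langle \sigma+\lambda_0,\alpha_0^\vee\rangle<2p(h-2)-p\langle \lambda_1,\alpha_0^\vee\rangle$. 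Hence verifying the inequality $pd > \langle -w_0\sigma+\lambda_0,\alpha_0^\vee\rangle$ required by \cref{P:injproj}(b) reduces to showing $d \geq 2(h-2) - \langle \lambda_1, \alpha_0^\vee \rangle$.

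This lower bound on $d$ is the main technical obstacle. The idea is to rewrite $\Ext_G^1(L(-w_0\lambda_1),L(\eta)) \cong H^1(G,L(\lambda_1)\otimes L(\eta))$ and then run the long exact sequence of cohomology along a composition series of $L(\lambda_1)\otimes L(\eta)$ to extract a simple composition factor $L(\tau)$ with $H^1(G,L(\tau))\neq 0$. Since $H^1(G,k)=0$ we have $\tau\neq 0$, and the linkage principle forces $\tau\in X_+\cap W_p\cdot 0$; \cref{L:2h-31} then gives $\langle \tau,\alpha_0^\vee\rangle\geq 2(h-2)$. Since $L(\tau)$ is a composition factor of the tensor product, $\tau \leq \lambda_1+\eta$ in the dominance order, and (as implicit in \cref{L:2h-31}) pairing with $\alpha_0^\vee$ respects dominance because each simple root pairs nonnegatively with $\alpha_0^\vee$; so $\langle \tau,\alpha_0^\vee\rangle\leq\langle \lambda_1+\eta,\alpha_0^\vee\rangle$. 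Rearranging yields the desired lower bound on $\langle\eta,\alpha_0^\vee\rangle$, completing the verification of (a).

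Parts (b) and (c) then follow from \cref{P:IfIPTMC}. Its hypothesis additionally requires $\St_1\otimes L(\lambda_0)$ to be injective and projective in $\textup{Mod}((p-1)\rho+\lambda_0)$, which is obtained by applying the argument of (a) in the degenerate situation where $(\lambda_0,\lambda_1,\sigma)$ is replaced by $(\lambda_0,0,\lambda_0)$: from $\lambda_0\leq \lambda\leq \sigma$ one gets $2\langle \lambda_0,\alpha_0^\vee\rangle\leq\langle \lambda+\sigma,\alpha_0^\vee\rangle<2p(h-2)$, so the needed inequality for \cref{P:injproj}(b) is in place. Then \cref{P:IfIPTMC} supplies the $G_1T$-isomorphism in (c); and its proof explicitly identifies $T((p-1)\rho+\lambda_0)\otimes L(\lambda_1)^{(1)}$ as the injective hull and projective cover of $L((p-1)\rho+w_0\lambda_0)\otimes L(\lambda_1)^{(1)}$ in $\textup{Mod}((p-1)\rho+\sigma)$ and identifies this tilting module with $T((p-1)\rho+\lambda)$, giving (b).
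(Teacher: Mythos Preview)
Your proof is correct and arrives at the same destination as the paper—both reduce part (a) to verifying the inequality in \cref{P:injproj}(b) and then invoke \cref{P:IfIPTMC} for (b) and (c)—but you establish the key lower bound $d\geq 2(h-2)-\langle\lambda_1,\alpha_0^\vee\rangle$ by a genuinely different route. The paper first proves that $\lambda_1$ lies in the closure of the bottom alcove (from $\langle\lambda_1+\rho,\alpha_0^\vee\rangle<p$), then uses linkage to argue that any $\eta\neq -w_0\lambda_1$ in the linkage class of $-w_0\lambda_1$ satisfies $\eta\geq s_{\alpha_0,p}\cdot(-w_0\lambda_1)$, and computes $\langle s_{\alpha_0,p}\cdot(-w_0\lambda_1),\alpha_0^\vee\rangle$ directly. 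Your argument instead rewrites $\Ext^1_G(L(-w_0\lambda_1),L(\eta))\cong H^1(G,L(\lambda_1)\otimes L(\eta))$, extracts via the long exact sequence a composition factor $L(\tau)$ with $H^1(G,L(\tau))\neq 0$, applies \cref{L:2h-31} to $\tau$, and uses $\tau\leq\lambda_1+\eta$ together with the nonnegativity of $\langle\alpha_i,\alpha_0^\vee\rangle$ to bound $\langle\eta,\alpha_0^\vee\rangle$ from below.

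What each buys: the paper's argument gives sharper control of where $\eta$ sits in the dominance order (not merely a bound on one pairing), at the cost of first locating $\lambda_1$ in the bottom alcove and invoking the geometry of the affine reflection $s_{\alpha_0,p}$. Your argument is more elementary and uniform—it does not separate the cases $\lambda_1=0$ and $\lambda_1\neq 0$, never needs to place $\lambda_1$ in an alcove, and reduces everything to the single input \cref{L:2h-31}. Both methods feed identically into \cref{P:injproj}(b) and \cref{P:IfIPTMC}, and your handling of the auxiliary hypothesis on $\St_1\otimes L(\lambda_0)$ matches the paper's.
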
 
\begin{proof} We will first discuss the case $\la_1=0.$
We will make use of Proposition \ref{P:injproj} and its notation, particularly the integer $d$.  Lemma~\ref{L:2h-31} implies that $d = 2(h-2)$, and we obtain
$$\langle \la - w_0\sigma, \alpha_0^{\vee}  \rangle = \langle \la + \sigma, \alpha_0^{\vee} \rangle < 2p(h-2)=p\cdot d.$$ 
It follows immediately from Proposition \ref{P:injproj} and Proposition \ref{P:IfIPTMC} that 
$\St_1 \otimes L(\la) $ is injective and projective in $\textup{Mod}((p-1)\rho+\sigma)$ and
$T((p-1)\rho+\la) \cong \widehat{Q}_1((p-1)\rho+\omega_0\la) $ as a $G_1T$-module.

Next assume that $\la_1 \neq 0.$ First note that $\la  \leq \sigma$ and  $ \langle \la + \sigma, \alpha_0^{\vee} \rangle < 2p(h-2)$ clearly implies that 
$ \langle 2 \la_0 , \alpha_0^{\vee} \rangle < 2p(h-2).$ From the above case we may conclude that $\St_1 \otimes L(\la_0)$ is injective and projective in $\textup{Mod}((p-1)\rho+\la_0).$

Now observe that
$\la = \la_0 + p \la_1 \leq \sigma$ and   $ \langle \la + \sigma, \alpha_0^{\vee} \rangle < 2p(h-2)$ implies 
$\langle p\la_1, \alpha_0^{\vee}  \rangle \leq \langle \la , \alpha_0^{\vee} \rangle < p(h-2).$  It follows that 
$\langle \la_1 + \rho, \alpha_0^{\vee}  \rangle < 2h-3=p.$  Hence, the weight $\la_1$ is contained in the lowest dominant alcove. 
Therefore,  any dominant weight $\eta$ linked to $\la_1$ is greater than or equal to $s_{\alpha_0,p}\cdot \la_1.$ Observe  that 
$\langle \la_1 +s_{\alpha_0,p}\cdot \la_1+ 2\rho, \alpha_0^{\vee}  \rangle=  2p,$ which implies  
$$\langle  \eta , \alpha_0^{\vee} \rangle \geq \langle s_{\alpha_0,p}\cdot \la_1, \alpha_0^{\vee}  \rangle 
	= 2p - 2(h-1) - \langle\la_1,\a_0^{\vee}\rangle
	=  2(h-2) - \langle  \lambda_1 , \alpha_0^{\vee}  \rangle.$$
However,
$$\langle \sigma + \lambda_0 , \alpha_0^{\vee}  \rangle= \langle \sigma + \lambda , \alpha_0^{\vee}  \rangle - p\langle  \lambda_1 , \alpha_0^{\vee}  \rangle < 2p(h-2)- p\langle  \lambda_1 , \alpha_0^{\vee}  \rangle.$$
Hence, $p\langle\eta,\a_0^{\vee}\rangle > \langle\s + \la_0,\a_0^{\vee}\rangle$, and so
the claim follows from Proposition~\ref{P:injproj} and Proposition~\ref{P:IfIPTMC}.
\end{proof}

We can now state a result that shows that the TMC holds for most weights when $p=2h-3$. 

\begin{theorem}\label{T:2h-31} Let $p=2h-3.$ If $\mu \in X_1$ and $\mu$ is not contained in $\overline{C}_{\mathbb Z}$ (i.e., the closure of the lowest alcove) then $T(2(p-1)\rho+ w_0\mu)|_{G_{1}T}\cong 
\widehat{Q}_1(\mu)$ (as a $G_1T$-module).
\end{theorem}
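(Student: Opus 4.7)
The plan is to apply Lemma~\ref{L:2h-32}(c) directly, with a carefully chosen pair $(\lambda,\sigma)$. Given $\mu \in X_1$ with $\mu \notin \overline{C}_{\mathbb Z}$, I would set
$$\lambda_0 := (p-1)\rho + w_0\mu, \quad \lambda_1 := 0, \quad \sigma := \lambda_0,$$
so that $\lambda := \lambda_0 + p\lambda_1 = \sigma$. First I would verify that $\lambda_0 \in X_1$, which is immediate because the fundamental-weight coefficients of $w_0\mu$ lie in $\{-(p-1),\dots,0\}$, and then the hypothesis $\lambda \leq \sigma$ of Lemma~\ref{L:2h-32} is trivial.

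With these choices, Lemma~\ref{L:2h-32}(c) would give
$$T((p-1)\rho + \lambda)|_{G_1T} \;\cong\; \widehat{Q}_1((p-1)\rho + w_0\lambda_0) \otimes L(0)^{(1)} \;=\; \widehat{Q}_1((p-1)\rho + w_0\lambda_0).$$
Expanding $\lambda_0$ and using $w_0\rho = -\rho$, the left-hand side becomes $T(2(p-1)\rho + w_0\mu)$ and the right-hand side becomes $\widehat{Q}_1(\mu)$, which is exactly the asserted isomorphism.

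The substantive step is to verify the weight bound $\langle \lambda + \sigma,\alpha_0^{\vee}\rangle < 2p(h-2)$. Using $w_0\alpha_0 = -\alpha_0$ (since $\alpha_0$ is the unique highest short root) together with $\langle\rho,\alpha_0^{\vee}\rangle = h-1$, a direct calculation yields
$$\langle \lambda + \sigma, \alpha_0^{\vee}\rangle \;=\; 2(p-1)(h-1) - 2\langle\mu, \alpha_0^{\vee}\rangle.$$
A brief rearrangement, substituting $p = 2h-3$, shows that the desired strict inequality is equivalent to $\langle\mu + \rho, \alpha_0^{\vee}\rangle > p$, which for $\mu \in X_+$ is precisely the characterization of $\mu \notin \overline{C}_{\mathbb Z}$.

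I do not expect any real obstacle here: the proof is essentially a fortuitous numerical coincidence in which the bound $2p(h-2)$ of Lemma~\ref{L:2h-32} is calibrated so that, at $p = 2h-3$, the required inequality matches the first-alcove-wall condition $\langle\mu+\rho,\alpha_0^{\vee}\rangle > p$ exactly. The weights $\mu \in \overline{C}_{\mathbb Z} \cap X_1$ lie outside the reach of Lemma~\ref{L:2h-32} and must be handled by separate methods in the subsequent rank-$2$ analyses.
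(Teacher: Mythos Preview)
Your proposal is correct and follows essentially the same approach as the paper: both set $\lambda=\sigma=(p-1)\rho+w_0\mu$, reduce the hypothesis $\langle\lambda+\sigma,\alpha_0^{\vee}\rangle<2p(h-2)$ to the inequality $\langle\mu,\alpha_0^{\vee}\rangle>h-2$ (equivalently $\langle\mu+\rho,\alpha_0^{\vee}\rangle>p$), and invoke Lemma~\ref{L:2h-32}. Your write-up is slightly more detailed in spelling out the arithmetic and the verification that $\lambda_0\in X_1$, but the argument is the same.
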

\begin{proof} Note that any restricted weight $\mu$ that is not contained in the closure of the lowest alcove satisfies $\langle \mu + \rho, \alpha_0^{\vee} \rangle > p = 2h-3.$ Therefore, 
$\langle \mu, \alpha_0^{\vee} \rangle > h-2$ and  
$$\langle (p-1)\rho +w_0\mu, \alpha_0^{\vee} \rangle < (p-1)(h-1)- (h-2)= p(h-2).$$ 
The assertion follows from the statements of Lemma \ref{L:2h-32} by setting $\la=\sigma=(p-1)\rho+w_0\mu.$ 
\end{proof} 


\section{Splitting Conditions That Are Equivalent To The TMC}

In this section we prove new results about module splittings which are equivalent to the TMC.  This provides the primary tools to determine what happens in small characteristic.  We give a summary of related conditions that imply the Tilting Module Conjecture in Theorem \ref{T:summary}.  

\subsection{The Main Idea}\label{S:Idea}

Up to a non-zero scalar, there are unique $G$-module homomorphisms
$$L(\lambda) \hookrightarrow T(\hat{\lambda}), \qquad T(\hat{\lambda}) \twoheadrightarrow L(\lambda).$$
We may tensor each of these maps with $\St_r$, and it was shown in \cite{So} that the truth of the TMC for a fixed $r\geq 1$ is equivalent to knowing that the $G$-module homomorphism
$$\St_r \otimes L(\lambda) \hookrightarrow \St_r \otimes T(\hat{\lambda})$$
splits (in $G$-mod) for all $\lambda \in X_r$.  Such a splitting is equivalent to saying that the modules
$$\St_r \otimes L(\lambda) \quad \text{and} \quad \St_r \otimes \left(T(\hat{\lambda})/L(\lambda)\right)$$
are both tilting modules.

We now summarize the work in \cite{So} to clearly explain why the property that these modules are tilting (stated in the Overarching Assumption (OA)) 
yields an inductive argument proving the Tilting Module Conjecture. 

\bigskip
\noindent\textbf{Overarching Assumption (OA):} For each $\lambda \in X_r$, the modules
$$\St_r \otimes L(\lambda) \quad \text{and} \quad \St_r \otimes \left(T(\hat{\lambda})/L(\lambda)\right)$$
are tilting.
\vskip .25cm 
\noindent 
We will now present the inductive argument, which works by decreasing induction on restricted weights.  We recall that the standard partial order on weights can be refined to the partial order $\le_{\mathbb{Q}}$ where $\lambda \le_{\mathbb{Q}} \mu$ if $\mu-\lambda$ is a linear combination of roots having non-negative rational coefficients.

\bigskip
\noindent\textbf{Base Case:} If $\la = (p^{r}-1)\rho$, then $\hat{\la} = (p^{r}-1)\rho$, $T(\hat{\la}) \mid_{G_{r}T}= \St_r$, and $\qq(\la) = \St_{r}$. Hence, $T(\hat{\la})\mid_{G_{r}} \cong Q_{r}(\la)$.

\bigskip
\noindent\textbf{Induction Hypothesis:} For $\lambda \in X_r$, if $\mu \in X_r$ and $\mu >_{\mathbb{Q}} \lambda$, then $T(\hat{\mu})\mid_{G_{r}T} \cong \widehat{Q}_r(\mu)$.

\bigskip
\noindent\textbf{Inductive Step:} The points that follow show that $G_r$-summands of $T(\hat{\lambda})$ corresponding to the isotypic components of $\soc_{G_r}\, T(\hat{\lambda})$ can be realized as summands over $G$.  Since $T(\hat{\lambda})$ is indecomposable over $G$, it will then follow that $T(\hat{\lambda})\mid_{G_rT}\cong \widehat{Q}_r(\lambda)$.
\begin{enumerate}
\item If $\lambda \ne \mu \in X_r$ is such that $\text{Hom}_{G_r}(L(\mu),T(\hat{\lambda})) \ne 0$, then, by \cite[Prop. 4.1.3]{So}, $\mu >_{\mathbb{Q}} \lambda$.
\item By induction hypothesis, $T(\hat{\mu}) \mid_{G_rT}\cong \widehat{Q}_r(\mu)$.
\item Since $\St_r \otimes L(\mu)$ is tilting (by OA), the module $\text{Hom}_{G_r}(L(\mu),T(\hat{\lambda}))^{(-r)}$ is tilting \cite[Lemma 3.1.2]{So}.
\item Since $\St_r \otimes \left(T(\hat{\mu})/L(\mu)\right)$ is tilting (by OA), the $G$-module injection
$$L(\mu) \otimes \text{Hom}_{G_r}(L(\mu),T(\hat{\lambda})) \hookrightarrow T(\hat{\lambda})$$
extends to a $G$-module homomorphism 
$$T(\hat{\mu}) \otimes \text{Hom}_{G_r}(L(\mu),T(\hat{\lambda})) \rightarrow T(\hat{\lambda}),$$
as follows from the vanishing of the extension group
$$\Ext_G^1((T(\hat{\mu})/L(\mu)) \otimes \text{Hom}_{G_r}(L(\mu),T(\hat{\lambda})),T(\hat{\lambda})) =0$$
(see proof of \cite[Theorem 5.1.1]{So}).
\item Since $T(\hat{\mu})\mid_{G_rT} \cong \widehat{Q}_r(\mu)$, this extended homomorphism is also injective.
\item Applying the $\tau$-functor of Section \ref{S:Notation}, we get a $G$-module surjection
$$T(\hat{\lambda}) \twoheadrightarrow T(\hat{\mu}) \otimes \text{Hom}_{G_r}(L(\mu),T(\hat{\lambda})).$$
\item The composite of the last two maps is an isomorphism over $G_rT$, hence also over $G$.  This composition means that the indecomposable $G$-module $T(\hat{\lambda})$ splits over $G$, a contradiction.
\item Conclusion is that if $\lambda \ne \mu \in X_r$, then $\text{Hom}_{G_r}(L(\mu),T(\hat{\lambda})) = 0$, thus $T(\hat{\lambda})\mid_{G_rT}\cong \widehat{Q}_r(\lambda)$.
\end{enumerate}

\subsection{Splitting Conditions} We begin with an observation involving the evaluation map $\varepsilon:\nabla(\hat{\lambda}) \rightarrow \widehat{Z}_1^{\prime}(\hat{\lambda})$ for 
$\lambda\in X_{r}$ that does not hold in general when $\hat{\lambda}$ is replaced by an arbitrary dominant weight. 

\begin{prop}\label{P:nablaz}
If $\lambda \in X_r$, then the evaluation map $\varepsilon:\nabla(\hat{\lambda}) \rightarrow \widehat{Z}_r^{\prime}(\hat{\lambda})$ is surjective.
\end{prop}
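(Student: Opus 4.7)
The plan is to identify the evaluation map $\varepsilon$ via the transitivity of induction. Since
$$\nabla(\hat\lambda)=\operatorname{ind}_B^G\hat\lambda=\operatorname{ind}_{G_rB}^G\bigl(\operatorname{ind}_B^{G_rB}\hat\lambda\bigr)=\operatorname{ind}_{G_rB}^G\widehat{Z}_r'(\hat\lambda),$$
the map $\varepsilon$ is the counit of the adjunction $\operatorname{ind}_{G_rB}^G\dashv\operatorname{res}$, and under Frobenius reciprocity it corresponds to $\operatorname{id}_{\widehat{Z}_r'(\hat\lambda)}$; in particular $\varepsilon\neq 0$. The image of $\varepsilon$ is thus a nonzero $G_rB$-submodule of $\widehat{Z}_r'(\hat\lambda)$, and since the weight-$\hat\lambda$ space on either side is one-dimensional, the image contains the highest weight line.

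The special form of $\hat\lambda=2(p^r-1)\rho+w_0\lambda$ for $\lambda\in X_r$ is the crucial input. A direct calculation shows that
$$\hat\lambda-(p^r-1)\rho=(p^r-1)\rho+w_0\lambda$$
lies in $X_r\subset X_+$, i.e.\ $\hat\lambda\in(p^r-1)\rho+X_+$. This ``sufficiently dominant'' condition is precisely the hypothesis that fails in the general dominant case (as remarked in the paper) and is what allows one to realize $\widehat{Z}_r'(\hat\lambda)$ as the top $G_rT$-quotient of $\nabla(\hat\lambda)|_{G_rT}$. Once that quotient map is in hand, it must agree with $\varepsilon$ up to scalar, because by Frobenius reciprocity
$$\operatorname{Hom}_{G_rB}\bigl(\nabla(\hat\lambda),\widehat{Z}_r'(\hat\lambda)\bigr)\cong\operatorname{Hom}_B\bigl(\nabla(\hat\lambda),k_{\hat\lambda}\bigr)=k$$
is one-dimensional (the highest weight line of $\nabla(\hat\lambda)$ determines the map). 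Thus $\varepsilon$ is that quotient map and, hence, is surjective.

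The hard part will be justifying that $\widehat{Z}_r'(\hat\lambda)$ actually arises as a $G_rT$-quotient of $\nabla(\hat\lambda)$ precisely under the condition $\hat\lambda\in(p^r-1)\rho+X_+$. If a clean structural reference is not at hand, the backup plan is a weight-space comparison: for each $\nu=\hat\lambda-\sum_{\alpha>0}n_\alpha\alpha$ with $0\le n_\alpha\le p^r-1$, compare $\dim\nabla(\hat\lambda)_\nu$ (computed via Weyl's character formula) with $\dim\widehat{Z}_r'(\hat\lambda)_\nu$ (the restricted Kostant partition function). The dominance $\hat\lambda\ge(p^r-1)\rho$ ensures that the nontrivial Weyl-group terms in the alternating sum do not contribute in this range, so dimensions match. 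Combining this with the $G_rT$-equivariance of $\varepsilon$ and the fact that $\varepsilon$ is nonzero on the highest weight line, one concludes that $\varepsilon$ is an isomorphism on every weight space of $\widehat{Z}_r'(\hat\lambda)$, and hence surjective.
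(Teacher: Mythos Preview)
Your proposal has a genuine gap. The main plan reduces the problem to showing that $\widehat{Z}_r'(\hat\lambda)$ arises as a $G_rB$-quotient of $\nabla(\hat\lambda)$, but that is exactly the statement to be proved; no independent argument is supplied, only the observation that $\hat\lambda\in(p^r-1)\rho+X_+$. Your uniqueness step (that any such quotient map equals $\varepsilon$ up to scalar) is fine, but it presupposes the existence of a surjection, so the reasoning is circular.

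The backup weight-comparison plan is incorrect on two counts. First, the asserted equality $\dim\nabla(\hat\lambda)_\nu=\dim\widehat{Z}_r'(\hat\lambda)_\nu$ for $\nu$ in the support of $\widehat{Z}_r'(\hat\lambda)$ is false in general: take $G=SL_3$, $p=2$, $r=1$, $\lambda=0$, so $\hat\lambda=(2,2)$, and $\nu=(0,0)$; then the multiplicity of $(0,0)$ in $\nabla(2,2)$ is $3$, while in $\widehat{Z}_1'(2,2)$ it is $1$. The restricted and unrestricted Kostant partition functions genuinely differ, and the Weyl-group corrections do not compensate. Second, even if the dimensions matched, ``$\varepsilon$ is nonzero on the highest weight line and the weight-space dimensions agree'' does not imply $\varepsilon$ is surjective on each weight space; you would still need injectivity of $\varepsilon$ on that range, which you have not established.

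The paper avoids these difficulties by passing through the tilting module $T(\hat\lambda)$. Since $\hat\lambda\in(p^r-1)\rho+X_+$, the module $T(\hat\lambda)$ is projective over $G_rT$, hence admits a $\widehat{Z}_r'$-filtration with $\widehat{Z}_r'(\hat\lambda)$ as the top factor. The resulting $G_rB$-surjection $T(\hat\lambda)\twoheadrightarrow\widehat{Z}_r'(\hat\lambda)$ then factors (by the universal property of induction) as $T(\hat\lambda)\to\nabla(\hat\lambda)\xrightarrow{\varepsilon}\widehat{Z}_r'(\hat\lambda)$, forcing $\varepsilon$ to be surjective. The crucial structural input you are missing is the existence of a $G_r$-projective $G$-module mapping onto both $\nabla(\hat\lambda)$ and $\widehat{Z}_r'(\hat\lambda)$.
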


\begin{proof} The $G$-module $T(\hat{\lambda})$ is projective upon restriction to $G_rT$ (cf. \cite[Lemma II.E.8]{rags}) and hence over $B_r$.  So, by \cite[Prop. II.11.2]{rags}, $T(\hat{\lambda})$ has a $G_rB$-filtration by modules of the form $\widehat{Z}_r^{\prime}(\mu)$.  Furthermore, by \cite[II.11.2 Rem. (4)]{rags} there is some such filtration in which the final factor is $\widehat{Z}_r^{\prime}(\hat{\lambda})$, implying the existence of a surjective $G_rB$-homomorphism
$$T(\hat{\lambda}) \rightarrow \widehat{Z}_r^{\prime}(\hat{\lambda}).$$
Since $\nabla(\hat{\lambda}) \cong \textup{ind}_{G_rB}^G \, \widehat{Z}_r^{\prime}(\hat{\lambda})$, by \cite[Prop. I.3.4 (b)]{rags} this morphism factors as
$$T(\hat{\lambda}) \rightarrow \nabla(\hat{\lambda}) \xrightarrow{\varepsilon} \widehat{Z}_r^{\prime}(\hat{\lambda}).$$
Since the composition of morphisms is surjective, it follows that $\varepsilon$ is also.
\end{proof}

Let $\lambda \in X_r$.  We have the following module inclusions as  $G_rB$-homomorphisms:
$$L(\lambda) \hookrightarrow \widehat{Z}_r^{\prime}(\lambda) \hookrightarrow T(\hat{\lambda}).$$
Similarly, we have a chain of surjective homomorphisms
$$T(\hat{\lambda}) \twoheadrightarrow \nabla(\hat{\lambda}) \twoheadrightarrow \widehat{Z}_r^{\prime}(\hat{\lambda}) \twoheadrightarrow L(\lambda).$$
Each inclusion map (resp. surjective map) is unique up to scalar multiple.  We may tensor the modules in these homomorphisms with $\St_r$ (and extend to homomorphisms between these tensors in the obvious way).  Focusing on the second series of homomorphisms we get
$$\St_r \otimes T(\hat{\lambda}) \twoheadrightarrow \St_r \otimes \nabla(\hat{\lambda}) \twoheadrightarrow \St_r \otimes \widehat{Z}_r^{\prime}(\hat{\lambda}) \twoheadrightarrow \St_r \otimes L(\lambda).$$

Set $R(\lambda)$ to be the module defined by the short exact sequence 
$$0 \rightarrow R(\lambda) \rightarrow \nabla(\hat{\lambda}) \rightarrow L(\lambda) \rightarrow 0.$$
One also obtains a short exact sequence 
\begin{equation}\label{eq:SESsteinberg}
0 \rightarrow \St_{r}\otimes R(\lambda) \rightarrow \St_r\otimes \nabla(\hat{\lambda}) \rightarrow \St_r\otimes L(\lambda) \rightarrow 0.
\end{equation}
We can now relate the splitting of the surjective map in (\ref{eq:SESsteinberg}) to the existence of good filtrations. 

\begin{prop}\label{P:splitgood}
Let $\lambda \in X_r$.  The following are equivalent:
\begin{enumerate}
\item[(a)] The canonical surjection of $G$-modules $\St_r \otimes \nabla(\hat{\lambda}) \twoheadrightarrow \St_r \otimes L(\lambda)$ splits.
\item[(b)] $\St_r \otimes R(\lambda)$ has a good filtration.
\end{enumerate}
\end{prop}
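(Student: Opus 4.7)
The plan is to identify, in the short exact sequence \eqref{eq:SESsteinberg}, which modules are known to have good filtrations from the outset, and then use $\tau$-duality together with standard $\Ext$-vanishing between Weyl- and good-filtered modules to promote a good filtration on $\St_r \otimes R(\lambda)$ into a genuine splitting. The key preliminary observation is that $\St_r$ is tilting and $\nabla(\hat\lambda)$ trivially has a good filtration, so by Mathieu's theorem (tensor products of modules with good filtrations have good filtrations) the middle term $\St_r \otimes \nabla(\hat\lambda)$ of \eqref{eq:SESsteinberg} admits a good filtration.

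For the direction $(a) \Rightarrow (b)$, a splitting realizes $\St_r \otimes R(\lambda)$ as a direct $G$-summand of $\St_r \otimes \nabla(\hat\lambda)$. Since direct summands of modules with good filtrations have good filtrations (Donkin), assertion (b) follows immediately.

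For the direction $(b) \Rightarrow (a)$, I would first extract a good filtration on the rightmost term $\St_r \otimes L(\lambda)$ of \eqref{eq:SESsteinberg}: with both $\St_r \otimes R(\lambda)$ and $\St_r \otimes \nabla(\hat\lambda)$ good-filtered, the long exact sequence in $\Ext_G^\bullet(\Delta(\mu),-)$ combined with the vanishing of $\Ext_G^i(\Delta(\mu),\nabla(\nu))$ for $i \ge 1$ gives $\Ext_G^1(\Delta(\mu),\St_r\otimes L(\lambda))=0$ for every $\mu \in X_+$, and hence a good filtration. I would then invoke the $\tau$-duality functor from Section~\ref{S:Notation}: since $^\tau\St_r \cong \St_r$ and $^\tau L(\lambda) \cong L(\lambda)$, and since $\tau$ is monoidal, one has $^\tau(\St_r \otimes L(\lambda)) \cong \St_r \otimes L(\lambda)$. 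Applying $\tau$ to a good filtration produces a Weyl filtration, so $\St_r \otimes L(\lambda)$ is in fact tilting. The Weyl filtration on $\St_r \otimes L(\lambda)$ and the good filtration on $\St_r \otimes R(\lambda)$ then force $\Ext_G^1(\St_r \otimes L(\lambda), \St_r \otimes R(\lambda))=0$, and this gives the splitting of \eqref{eq:SESsteinberg}.

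The main obstacle I expect is the bootstrapping step in $(b) \Rightarrow (a)$: promoting the good filtration hypothesis on $\St_r \otimes R(\lambda)$ to the tilting property of $\St_r \otimes L(\lambda)$. Once the $\tau$-self-duality trick is in place, the rest is a routine $\Ext^1$-vanishing argument; the subtlety is simply recognizing that the Mathieu/Donkin machinery on good filtrations combines with the symmetry $^\tau M \cong M$ to upgrade one filtration into tilting, which is exactly what makes the splitting criterion so useful for verifying the Overarching Assumption of Section~\ref{S:Idea}.
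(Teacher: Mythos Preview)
Your proof is correct and follows essentially the same route as the paper: both directions use that $\St_r \otimes \nabla(\hat\lambda)$ has a good filtration, and for $(b)\Rightarrow(a)$ both first deduce a good filtration on $\St_r \otimes L(\lambda)$ from the short exact sequence, upgrade it to tilting via $\tau$-self-duality, and then split \eqref{eq:SESsteinberg} using $\Ext^1$-vanishing between Weyl- and good-filtered modules. The only cosmetic difference is that the paper cites \cite[Cor.~II.4.17]{rags} for the good filtration on $\St_r\otimes L(\lambda)$, whereas you spell out the underlying $\Ext^1(\Delta(\mu),-)$ argument.
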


\begin{proof} We consider the short exact sequence (\ref{eq:SESsteinberg}). Since $\St_{r}\otimes\nabla(\hat{\la})$ has a good filtration, if the sequence splits, then each summand admits a good filtration. So (a) implies (b). Conversely, if $\St_{r}\otimes R(\la)$ has a good filtration, then, by \cite[Cor. II.4.17]{rags}, so does $\St_r\otimes L(\la)$.  The module $\St_r\otimes L(\la)$ is invariant under the $\tau$-functor, which sends modules with good filtrations to those with Weyl filtrations (and vice versa). Hence, $\St_r\otimes L(\la)$ has a Weyl filtration, and therefore is tilting.   As there are no non-trival extensions of a module with a Weyl filtration by one with a good filtration, the sequence must split.  
\end{proof}

One way to establish statement (b), hence also obtaining the splitting in (a), is to verify the stronger hypothesis of the following proposition, which is a special case of a more general result observed by Andersen \cite{And01}.  Namely, if $\St_r \otimes L(\mu)$ is tilting for all $\mu \in X_r$ and $M$ admits a good $(p,r)$-filtration, then $\St_r\otimes M$ admits a good filtration.  As Andersen's result was stated specifically for $r=1$, we include a short proof for general $r$.

\begin{prop}\label{P:goodpgood}
If $R(\lambda)$ has a good $(p,r)$-filtration and $\St_r \otimes L(\mu)$ is tilting for all $\mu \in X_r$, then $\St_r \otimes R(\lambda)$ has a good filtration. 
\end{prop}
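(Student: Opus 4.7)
The plan is to reduce everything to individual filtration factors via the basic closure properties of good filtrations. Take a good $(p,r)$-filtration
$$0 = M_0 \subset M_1 \subset \cdots \subset M_s = R(\lambda)$$
with $M_i/M_{i-1} \cong \nabla^{(p,r)}(\nu_i) = L(\nu_{i,0}) \otimes \nabla(\nu_{i,1})^{(r)}$, where $\nu_i = \nu_{i,0} + p^{r}\nu_{i,1}$ with $\nu_{i,0} \in X_r$ and $\nu_{i,1} \in X_+$. Tensoring the filtration with the (injective/projective over $G_r$) module $\St_r$ preserves exactness, so we obtain a filtration of $\St_r \otimes R(\lambda)$ with successive quotients
$$\St_r \otimes L(\nu_{i,0}) \otimes \nabla(\nu_{i,1})^{(r)}.$$

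First I would show that each such factor has a good filtration. By hypothesis, $\St_r \otimes L(\nu_{i,0})$ is tilting, hence in particular admits a good filtration. Since $\nabla(\nu_{i,1})^{(r)}$ obviously also admits a good filtration (its sole factor is itself), Donkin's/Mathieu's tensor product theorem \cite[Prop. II.4.21]{rags} implies that $\St_r \otimes L(\nu_{i,0}) \otimes \nabla(\nu_{i,1})^{(r)}$ has a good filtration.

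The final step is to assemble these pieces by induction on the filtration length $s$, using the extension-closure property of the class of modules with good filtrations. Concretely, from the short exact sequence
$$0 \to \St_r \otimes M_{i-1} \to \St_r \otimes M_i \to \St_r \otimes L(\nu_{i,0}) \otimes \nabla(\nu_{i,1})^{(r)} \to 0$$
and the cohomological criterion $\Ext_G^1(\Delta(\mu), -) = 0$ for all $\mu \in X_+$ characterizing modules with good filtrations \cite[Prop. II.4.16]{rags}, the long exact sequence in cohomology shows that if both the left and right terms have good filtrations, so does the middle term. Starting from $M_1$ (which equals the first factor) and iterating, we conclude that $\St_r \otimes R(\lambda)$ has a good filtration. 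The only mildly subtle point is invoking the tensor product theorem for $\St_r \otimes L(\nu_{i,0})$ in tandem with a Frobenius twist of a good filtration, but both ingredients are standard and cause no real obstacle.
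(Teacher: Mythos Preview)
Your argument has a genuine gap at the step where you claim that $\nabla(\nu_{i,1})^{(r)}$ ``obviously also admits a good filtration (its sole factor is itself).'' This is false: the Frobenius twist of a costandard module is \emph{not} itself a costandard module, and in general it need not admit a good filtration at all. For a concrete counterexample, take $G = \mathrm{SL}_2$ with $p = 2$: then $\nabla(1) = L(1)$ is the natural $2$-dimensional representation, and $\nabla(1)^{(1)} = L(1)^{(1)} = L(2)$. But $\Ext^1_G(\Delta(0), L(2)) = \Ext^1_G(k, L(2)) \neq 0$, as witnessed by the nonsplit extension $0 \to L(2) \to \nabla(2) \to L(0) \to 0$. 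Hence $L(2)$ fails the cohomological criterion and does not have a good filtration. Consequently you cannot invoke \cite[Prop.~II.4.21]{rags} on the pair $\St_r \otimes L(\nu_{i,0})$ and $\nabla(\nu_{i,1})^{(r)}$ as written.

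The paper's proof circumvents exactly this obstacle with a trick: since $\St_r$ is self-dual, it is a summand of $\St_r \otimes \St_r \otimes \St_r$, so $\St_r \otimes L(\mu) \otimes \nabla(\mu')^{(r)}$ is realized as a direct summand of
\[
N = \St_r \otimes \bigl(\St_r \otimes L(\mu)\bigr) \otimes \bigl(\St_r \otimes \nabla(\mu')^{(r)}\bigr).
\]
The point is that $\St_r \otimes \nabla(\mu')^{(r)} \cong \nabla((p^r-1)\rho + p^r\mu')$ is an honest costandard module, and $\St_r \otimes L(\mu)$ has a good filtration by hypothesis; now the tensor product theorem applies to $N$, and summands of modules with good filtrations again have good filtrations. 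The insertion of the extra Steinberg factors is precisely what converts the problematic twisted costandard module into a genuine one.
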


\begin{proof}
If suffices to prove that $\St_r \otimes (L(\mu) \otimes \nabla(\mu^{\prime})^{(r)})$ has a good filtration for all $\mu \in X_r$ and $\mu^{\prime} \in X^+$.  We have that $\St_r$ is a summand of $\St_r \otimes ({\St_r}^* \otimes \St_r)$, and ${\St_r}^* \cong \St_r$.  Thus
$$\St_r \otimes (L(\mu) \otimes \nabla(\mu^{\prime})^{(r)})$$
can be realized as a summand of
$$N = \St_r \otimes (\St_r \otimes L(\mu)) \otimes (\St_r \otimes \nabla(\mu^{\prime})^{(r)}).$$
We have $\St_r \otimes \nabla(\mu^{\prime})^{(r)} \cong \nabla((p^r-1)\rho+p^r\mu^{\prime})$, and by assumption $\St_r \otimes L(\mu)$ has a good filtration, hence $N$ has a good filtration.  Each summand of $N$ therefore has a good filtration, proving the claim.
\end{proof}

\begin{remark}
The hypothesis that $\St_r \otimes L(\mu)$ is tilting for all restricted weights $\mu$ is known to hold in a number of cases (cf. \cite{BNPS}). 
\end{remark}

\subsection{} The following proposition shows that the splitting for the map given in Proposition~\ref{P:splitgood}(a) is equivalent to splitting for analogous maps with tilting and baby Verma modules. 

\begin{prop}\label{P:equivalent}
Let $\lambda \in X_r$.  The following are equivalent:
\begin{enumerate}
\item[(a)] The canonical surjection of $G$-modules $\St_r \otimes T(\hat{\lambda}) \twoheadrightarrow \St_r \otimes L(\lambda)$ splits.
\item[(b)] The canonical surjection of $G$-modules $\St_r \otimes \nabla(\hat{\lambda}) \twoheadrightarrow \St_r \otimes L(\lambda)$ splits.
\item[(c)] The canonical surjection of $G_rB$-modules $\St_r \otimes \widehat{Z}_r^{\prime}(\hat{\lambda}) \twoheadrightarrow \St_r \otimes L(\lambda)$ splits.
\end{enumerate}
\end{prop}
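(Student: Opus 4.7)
The plan is to prove $(a) \Leftrightarrow (b)$ and $(b) \Leftrightarrow (c)$ separately. The direction $(a) \Rightarrow (b)$ is the easiest: the $G$-surjection in (a) factors as the natural quotient $\St_r \otimes T(\hat{\lambda}) \twoheadrightarrow \St_r \otimes \nabla(\hat{\lambda})$ (coming from the good filtration on $T(\hat{\lambda})$) followed by the map of (b), so any splitting of (a) composes with this quotient to give a splitting of (b).

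For $(b) \Rightarrow (a)$, the strategy is an Ext-vanishing/lifting argument. Proposition~\ref{P:splitgood} combined with hypothesis (b) forces $\St_r \otimes R(\lambda)$ to have a good filtration and, as established in that proof, $\St_r \otimes L(\lambda)$ to be tilting. The kernel of the (a)-surjection $\St_r \otimes T(\hat{\lambda}) \twoheadrightarrow \St_r \otimes L(\lambda)$ is $\St_r \otimes K$, where $K = \ker(T(\hat{\lambda}) \twoheadrightarrow L(\lambda))$ fits in a short exact sequence
\begin{equation*}
0 \to K_1 \to K \to R(\lambda) \to 0
\end{equation*}
with $K_1 = \ker(T(\hat{\lambda}) \twoheadrightarrow \nabla(\hat{\lambda}))$ carrying a good filtration coming from the tilting structure. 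Tensoring with $\St_r$ (and using that tensor products preserve good filtrations, via Mathieu's theorem) shows $\St_r \otimes K$ has a good filtration. Since $\St_r \otimes L(\lambda)$ has a Weyl filtration, we obtain $\Ext_G^1(\St_r \otimes L(\lambda), \St_r \otimes K) = 0$, and consequently the (a)-sequence splits.

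The equivalence $(b) \Leftrightarrow (c)$ will be deduced via Frobenius reciprocity. Using the tensor identity and $\nabla(\hat{\lambda}) = \operatorname{ind}_{G_rB}^G \widehat{Z}_r'(\hat{\lambda})$ yields the natural isomorphism
\begin{equation*}
\operatorname{Hom}_G(\St_r \otimes L(\lambda), \St_r \otimes \nabla(\hat{\lambda})) \cong \operatorname{Hom}_{G_rB}(\St_r \otimes L(\lambda), \St_r \otimes \widehat{Z}_r'(\hat{\lambda})).
\end{equation*}
Under this bijection, a $G$-map $s$ corresponds to $(\operatorname{id}_{\St_r} \otimes \varepsilon) \circ s$, where $\varepsilon: \nabla(\hat{\lambda}) \twoheadrightarrow \widehat{Z}_r'(\hat{\lambda})$ is the evaluation map shown surjective in Proposition~\ref{P:nablaz}. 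Because the surjection in (b) factors as $\operatorname{id}_{\St_r} \otimes \varepsilon$ composed with the surjection in (c), a $G$-splitting of (b) corresponds under this bijection to a $G_rB$-splitting of (c) and vice versa, giving the equivalence.

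The main obstacle is the implication $(b) \Rightarrow (a)$: one must show that a splitting over the induced module actually lifts to a splitting over the tilting module. This is precisely where Proposition~\ref{P:splitgood} plays a decisive role, delivering the tiltingness of $\St_r \otimes L(\lambda)$ needed to force the Ext-vanishing. The remaining implications are essentially formal manipulations of quotient maps and adjunctions.
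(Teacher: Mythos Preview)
Your proof is correct and follows essentially the same route as the paper. The only cosmetic difference is in $(b)\Leftrightarrow(c)$: the paper obtains $(c)\Rightarrow(b)$ by ``applying $\ind_{G_rB}^G$ to the split sequence,'' whereas you unpack this via the Frobenius reciprocity isomorphism and the evaluation map $\varepsilon$; these are two formulations of the same adjunction argument, and your version is arguably more transparent about why the resulting $G$-map really splits $\pi_b$.
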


\begin{proof}
The surjection in (a) factors through the map in (b) which factors through the map in (c), so we get from this that (a) $\Rightarrow$ (b) $\Rightarrow$ (c).  If (c) holds, then we obtain (b) by applying the functor $\ind_{G_rB}^{G}(-)$ to the split sequence of $G_rB$-modules, yielding the desired split sequence of $G$-modules.  

Suppose that (b) holds.   Then $\St_r \otimes L(\lambda)$ has a good filtration, and by $\tau$-duality is tilting.  Applying Proposition \ref{P:splitgood}, we also have that $\St_r \otimes R(\lambda)$ has a good filtration. 
 Let $N$ be the kernel of the surjective map $T(\hat{\lambda}) \rightarrow L(\lambda)$.  Since $\nabla(\hat{\lambda})$ is the final good filtration factor in $T(\hat{\lambda})$, the fact that 
$\St_r \otimes R(\lambda)$ has a good filtration, as does $\St_r \otimes \nabla(\mu)$ for all other good filtration factors $\nabla(\mu)$ of $T(\hat{\lambda})$, implies that $\St_r \otimes N$ has a good filtration.  
Since $\St_r \otimes L(\lambda)$ is tilting, this then implies that the sequence
$$0 \rightarrow \St_r \otimes N \rightarrow \St_r \otimes T(\hat{\lambda}) \rightarrow \St_r \otimes L(\lambda) \rightarrow 0$$
splits, proving (a).
\end{proof} 

In \cite[Section 2.2]{BNPS2}, the authors described the interrelationships and hierarchies between the Donkin's Tilting Module  and $(p,r)$-Filtration Conjectures. 
We are now ready to demonstrate how the validity of these conjectures imply the splitting of the maps given in Proposition~\ref{P:equivalent}. 

\begin{theorem}\label{T:pfilttosplit} Let $\lambda\in X_r$. Suppose that 
\begin{enumerate}
\item[(a)] $\St_r \otimes L(\sigma)$ is tilting for all $\sigma\in X_r$; 
\item[(b)] $T(\hat{\lambda}) \mid_{G_{r}T}\cong \widehat{Q}_r(\lambda)$; and
\item[(c)]  either $\nabla(\hat{\lambda})$ or $T(\hat{\lambda})$ have a good $(p,r)$-filtration. 
\end{enumerate} 
Then the canonical surjection of $G$-modules $\St_r \otimes T(\hat{\lambda}) \twoheadrightarrow \St_r \otimes L(\lambda)$ splits.
\end{theorem}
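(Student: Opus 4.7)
The plan is to chain the reductions via Propositions~\ref{P:equivalent}, \ref{P:splitgood}, and \ref{P:goodpgood} until the problem becomes producing a good $(p,r)$-filtration of $R(\lambda)$, and then to extract such a filtration from hypothesis (c), with hypothesis (b) controlling the top factor.

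First, Proposition~\ref{P:equivalent} (equivalence of its statements (a) and (b)) reduces the claimed splitting of $\St_r\otimes T(\hat{\lambda})\twoheadrightarrow\St_r\otimes L(\lambda)$ to the splitting of $\St_r\otimes\nabla(\hat{\lambda})\twoheadrightarrow\St_r\otimes L(\lambda)$. Proposition~\ref{P:splitgood} then rephrases this as $\St_r\otimes R(\lambda)$ having a good filtration, and Proposition~\ref{P:goodpgood}, combined with hypothesis (a), shows that it suffices to exhibit a good $(p,r)$-filtration on $R(\lambda)$ itself.

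To produce this filtration, I would use the following observation: since $T(\hat{\lambda})$ has simple $G$-socle $L(\lambda)$ (noted after Conjecture~\ref{C:tilting}) and is $\tau$-self-dual, its $G$-head is also the simple module $L(\lambda)$; consequently $\nabla(\hat{\lambda})$, as a quotient of $T(\hat{\lambda})$, has simple $G$-head $L(\lambda)$ as well. In the subcase of (c) where $\nabla(\hat{\lambda})$ has a good $(p,r)$-filtration, the top factor $\nabla^{(p,r)}(\nu)=L(\nu_0)\otimes\nabla(\nu_1)^{(r)}$ of any such filtration must then have $G$-head equal to $L(\lambda)$, and I would use hypothesis (b), via the identification $T(\hat{\lambda})|_{G_rT}\cong\widehat{Q}_r(\lambda)$ and the resulting control on $G_rT$-composition multiplicities, to force $\nu=\lambda$ (i.e.\ to rule out $\nu_1\neq 0$). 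Stripping the top factor $L(\lambda)=\nabla^{(p,r)}(\lambda)$ off---its surjection to $L(\lambda)$ necessarily matching the canonical surjection by the uniqueness-up-to-scalar of $\nabla(\hat{\lambda})\twoheadrightarrow L(\lambda)$---then yields a good $(p,r)$-filtration of $R(\lambda)$. The subcase where $T(\hat{\lambda})$ has a good $(p,r)$-filtration proceeds by the analogous top-factor argument applied to $T(\hat{\lambda})$ directly, producing a good $(p,r)$-filtration of $N=\ker(T(\hat{\lambda})\twoheadrightarrow L(\lambda))$; since the kernel of $T(\hat{\lambda})\twoheadrightarrow\nabla(\hat{\lambda})$ has an ordinary good filtration (hence trivially a good $(p,r)$-filtration), a standard diagram chase transfers this to a good $(p,r)$-filtration of $R(\lambda)$.

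The principal obstacle is the top-factor identification: a factor $\nabla^{(p,r)}(\nu)$ with $\nu=\nu_0+p^r\nu_1$ and $\nu_1\neq 0$ can have $G$-head containing $L(\lambda)$, precisely when the head of $\nabla(\nu_1)$ contains $L(0)$ (which occurs for certain $\nu_1\in W_p\cdot 0$, e.g.\ via nonsplit quotients of induced modules). Ruling out this possibility is where hypothesis (b) is essentially used, through the precise $G_rT$-structure of $\widehat{Q}_r(\lambda)$ restricting the labels $\nu$ that can appear. Once the top factor is identified as $\nabla^{(p,r)}(\lambda)$, the chain of propositions assembled in the first two paragraphs delivers the splitting.
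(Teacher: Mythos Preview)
Your chain of reductions through Propositions~\ref{P:equivalent}, \ref{P:splitgood}, and \ref{P:goodpgood} is exactly the paper's strategy. The divergence is at the top-factor identification, and there your argument has a real gap in both subcases.

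For the $\nabla(\hat\lambda)$ subcase: you correctly note that the $G$-head of $\nabla(\hat\lambda)$ is simple, but as you yourself flag, this alone does not force the top $(p,r)$-factor to be $L(\lambda)$ (e.g.\ for $\mathrm{SL}_2$, $p=2$, the $G$-head of $\nabla(2)$ is $k$, so $L(\lambda)\otimes\nabla(2)^{(1)}$ has $G$-head $L(\lambda)$). Your proposed rescue via ``$G_rT$-composition multiplicities'' from hypothesis (b) is not an argument as written. What (b) actually buys is more direct: it makes the \emph{$G_r$-head} of $T(\hat\lambda)$, and hence of its quotient $\nabla(\hat\lambda)$, simple and isomorphic to $L(\lambda)$. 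Since any $\nabla^{(p,r)}(\nu)=L(\nu_0)\otimes\nabla(\nu_1)^{(r)}$ is semisimple over $G_r$ (isomorphic to $L(\nu_0)^{\oplus\dim\nabla(\nu_1)}$), a nonzero $G$-surjection $\nabla(\hat\lambda)\twoheadrightarrow\nabla^{(p,r)}(\nu)$ forces $\nabla^{(p,r)}(\nu)$ to be a quotient of the $G_r$-head $L(\lambda)$, whence $\nu_0=\lambda$ and $\dim\nabla(\nu_1)=1$, i.e.\ $\nu=\lambda$. That is the paper's one-line use of (b), and it replaces your entire final paragraph.

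For the $T(\hat\lambda)$ subcase: your parenthetical ``hence trivially a good $(p,r)$-filtration'' is false. A good filtration (factors $\nabla(\mu)$) does \emph{not} in general yield a good $(p,r)$-filtration; whether each $\nabla(\mu)$ admits one is precisely the content of Donkin's $(p,r)$-filtration conjecture. Moreover, even granting your claim on $K=\ker(T(\hat\lambda)\to\nabla(\hat\lambda))$, good $(p,r)$-filtrations are not stable under quotients, so no ``standard diagram chase'' produces one on $R(\lambda)=N/K$. The fix is to abandon the reduction to $R(\lambda)$ in this subcase: once the $G_r$-head argument shows that $N=\ker(T(\hat\lambda)\to L(\lambda))$ (the $G_r$-radical) inherits a good $(p,r)$-filtration, the proof of Proposition~\ref{P:goodpgood} applies verbatim with $N$ in place of $R(\lambda)$ to give $\St_r\otimes N$ a good filtration, and then the sequence $0\to\St_r\otimes N\to\St_r\otimes T(\hat\lambda)\to\St_r\otimes L(\lambda)\to 0$ splits because $\St_r\otimes L(\lambda)$ is tilting by (a). This is what the paper means by ``analogous arguments''.
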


 \begin{proof} If $T(\hat{\lambda})\mid_{G_{r}T} \cong \widehat{Q}_r(\lambda)$, then the $G_r$-head of $T(\hat{\lambda})$ and of $\nabla(\hat{\lambda})$ are both simple, isomorphic to $L(\lambda)$.  Suppose that $\nabla(\hat{\lambda})$ has a good $(p,r)$-filtration.  Then the final factor of this filtration must be $L(\lambda)$, hence $R(\lambda)$ has a good $(p,r)$-filtration.  The splitting then follows from Propositions \ref{P:goodpgood}, \ref{P:splitgood}, and \ref{P:equivalent}.

If $T(\hat{\lambda})$ has a good $(p,r)$-filtration, then in a similar way it follows that its $G_r$-radical must have a good $(p,r)$-filtration.  Analogous arguments as those used in the aforementioned propositions just cited yield the proof here.
\end{proof}

\subsection{}  The following theorem provides key conditions that may be used to verify the Tilting Module Conjecture.  A key aspect of this theorem (distinguishing it from \cite[Theorem 1.2.1]{So}) is that it allows one to potentially verify different conditions for different $p^r$-restricted weights.

\begin{theorem}\label{T:summary}
Let $G$ be a semisimple, simply connected algebraic group scheme defined and split over ${\mathbb F}_{p}$, and suppose it holds that $\St_r\otimes L(\mu)$ is a tilting module for every $\mu \in X_r$.  The Tilting Module Conjecture holds for $G$ and for a fixed $r\geq 1$  if for each $\mu \in X_r$, at least one of the following conditions is also true (some of these are equivalent):
\begin{itemize}
\item[(a)] $\St_r \otimes T(\hat{\mu}) \rightarrow \St_r \otimes L(\mu)$ splits.
\item[(b)] $\St_r \otimes \nabla(\hat{\mu}) \rightarrow \St_r \otimes L(\mu)$ splits.
\item[(c)] $\nabla(\hat{\mu})$ has a good $(p,r)$-filtration.
\item[(d)] $T(\hat{\mu})$ has a good $(p,r)$-filtration.
\end{itemize}
\end{theorem}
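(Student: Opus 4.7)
The plan is to verify that each of conditions (a)--(d) for a given weight $\mu\in X_r$ implies the splitting condition (a), and then to invoke the inductive scheme described in Section~\ref{S:Idea} to derive the TMC at level $r$. Under the ambient hypothesis that $\St_r\otimes L(\sigma)$ is tilting for every $\sigma\in X_r$, the Overarching Assumption (OA) from Section~\ref{S:Idea} reduces to the statement that $\St_r\otimes(T(\hat{\mu})/L(\mu))$ is tilting for every $\mu\in X_r$, which (as explained at the start of that subsection) is equivalent to the splitting in (a). Once (a) holds for every $\mu$, OA is in force, and the step-by-step argument of Section~\ref{S:Idea} yields the TMC.

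By Proposition~\ref{P:equivalent}, conditions (a) and (b) are equivalent, so the substantive content is to show (c)$\Rightarrow$(b) and (d)$\Rightarrow$(a). For (c)$\Rightarrow$(b), since $\mu\in X_r$ one has $\nabla^{(p,r)}(\mu)=L(\mu)$, and using the description $\nabla^{(p,r)}(\lambda)=L(\lambda_0)\otimes\nabla(\lambda_1)^{(r)}$ a direct composition-factor count shows that $L(\mu)$ occurs as a $(p,r)$-factor of $\nabla(\hat{\mu})$ only when $\lambda=\mu$. In particular, $L(\mu)$ appears as a composition factor of $\nabla(\hat{\mu})$ with multiplicity one, and it lies in the $G$-head of $\nabla(\hat{\mu})$ via the canonical surjection inherited from the head of $T(\hat{\mu})$. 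Combining these facts, one rearranges the given good $(p,r)$-filtration to place the unique $L(\mu)$-factor at the top, whereupon $R(\mu):=\ker(\nabla(\hat{\mu})\twoheadrightarrow L(\mu))$ is the penultimate step and inherits a good $(p,r)$-filtration. Proposition~\ref{P:goodpgood} then gives that $\St_r\otimes R(\mu)$ admits a good filtration, and Proposition~\ref{P:splitgood} produces the splitting (b). The argument for (d)$\Rightarrow$(a) proceeds in parallel: setting $N:=\ker(T(\hat{\mu})\twoheadrightarrow L(\mu))$ and applying the same multiplicity/rearrangement argument, $N$ carries a good $(p,r)$-filtration, so the proof of Proposition~\ref{P:goodpgood} yields that $\St_r\otimes N$ has a good filtration. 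Since $\St_r\otimes L(\mu)$ is tilting (hence also Weyl-filtered), there are no nontrivial extensions and the short exact sequence
\begin{equation*}
0\to\St_r\otimes N\to\St_r\otimes T(\hat{\mu})\to\St_r\otimes L(\mu)\to 0
\end{equation*}
splits, which is (a).

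The main obstacle will be justifying the rearrangement step: showing that the unique $\nabla^{(p,r)}(\mu)=L(\mu)$ factor can always be moved to the top of a good $(p,r)$-filtration of $\nabla(\hat{\mu})$ (respectively $T(\hat{\mu})$). The natural approach is to slide this factor past each later factor $\nabla^{(p,r)}(\lambda)$, $\lambda\neq\mu$, using that $L(\mu)$ is not a composition factor of any such $\nabla^{(p,r)}(\lambda)$ to force the corresponding two-step extension to split. Alternatively, one can bypass the rearrangement by defining $R(\mu)$ and $N$ directly via the canonical surjection onto $L(\mu)$ and verifying, by induction on the length of the filtration, that the intersection of the kernel with each filtration step is assembled from the remaining $\nabla^{(p,r)}$-factors of the original filtration. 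Either route reduces the theorem to the propositions already established together with the inductive scheme of Section~\ref{S:Idea}.
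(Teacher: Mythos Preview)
Your second route---intersecting a given good $(p,r)$-filtration with $\ker(\nabla(\hat\mu)\twoheadrightarrow L(\mu))$---does work and yields (c)$\Rightarrow$(b) and (d)$\Rightarrow$(a) unconditionally; the key point is that each $\nabla^{(p,r)}(\lambda)$ has \emph{simple} $G$-head $L(\lambda)$, so the first subquotient mapping nontrivially to $L(\mu)$ is forced to be $\nabla^{(p,r)}(\mu)=L(\mu)$, and one checks (via the second isomorphism theorem) that the kernel inherits a $(p,r)$-filtration by the remaining factors. However, two of your supporting assertions are false and should be dropped. First, $L(\mu)$ \emph{can} be a composition factor of $\nabla^{(p,r)}(\lambda)$ for $\lambda\ne\mu$: take $\lambda_0=\mu$ and any $0\ne\lambda_1\in X_+\cap W_p\cdot 0$, so that $[\nabla(\lambda_1):L(0)]\ge 1$ and hence $[L(\mu)\otimes\nabla(\lambda_1)^{(r)}:L(\mu)]\ge 1$. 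This breaks the swapping argument in your first route. Second, the claim that $L(\mu)$ has composition multiplicity one in $\nabla(\hat\mu)$ is neither justified nor needed; the kernel-filtration argument removes exactly one $L(\mu)$-factor regardless of how many there are.

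The paper takes a different and shorter path: it does \emph{not} prove (c)$\Rightarrow$(a) outright. Instead it observes that in the decreasing induction of Section~\ref{S:Idea}, the splitting for $\mu$ is only needed after the inductive hypothesis $T(\hat\mu)|_{G_rT}\cong\widehat{Q}_r(\mu)$ is already in hand. That hypothesis makes the $G_r$-head of $\nabla(\hat\mu)$ (and of $T(\hat\mu)$) simple, equal to $L(\mu)$, which forces the \emph{final} factor of any good $(p,r)$-filtration to be $L(\mu)$ with no rearrangement needed; this is exactly Theorem~\ref{T:pfilttosplit}. Your argument buys a slightly stronger standalone implication at the cost of the extra filtration bookkeeping; the paper's argument is cleaner but leans on the induction.
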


\begin{proof} 
The inductive step outlined in Section \ref{S:Idea} says that for $\lambda \in X_r$, we have $T(\hat{\lambda})\mid_{G_{r}T} \cong \widehat{Q}_r(\lambda)$ if for every $p^{r}$-restricted $\mu >_{\mathbb{Q}} \lambda$ we have both that $T(\hat{\mu})\mid_{G_rT} \cong \widehat{Q}_1(\mu)$, and that the canonical map $\St_r \otimes T(\hat{\mu}) \rightarrow \St_r \otimes L(\mu)$ splits.  By Proposition \ref{P:equivalent}, conditions (a) and (b) are equivalent, and by Theorem \ref{T:pfilttosplit}, if $T(\hat{\mu})\mid_{G_{r}T}\cong \widehat{Q}_1(\mu)$, then (c) and (d) imply (a).  Thus if the hypothesis of this theorem is true and $T(\hat{\mu})\mid_{G_rT}\cong \widehat{Q}_r(\mu)$, we immediately have that $\St_r \otimes T(\hat{\mu}) \rightarrow \St_r \otimes L(\mu)$ splits.  This guarantees that the inductive step always holds, so that 
$T(\hat{\lambda})\mid_{G_rT} \cong \widehat{Q}_r(\lambda)$ for all $\lambda \in X_r$.
\end{proof}

It should be mentioned that one gets two weights ``for free'' when checking these conditions.

\begin{prop}\label{P:special}
For all $p$, we have that
$$\St_r \otimes T(\hat{\lambda}) \rightarrow \St_r \otimes L(\lambda)$$
splits for $\lambda = 0,$ $(p^r-1)\rho$. 
\end{prop}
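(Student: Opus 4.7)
The two cases admit quite different treatments. For $\lambda = (p^r-1)\rho$, the argument is immediate: $\hat{\lambda} = 2(p^r-1)\rho + w_0((p^r-1)\rho) = (p^r-1)\rho$, so $T(\hat{\lambda}) = \St_r = L(\lambda)$ and the canonical surjection $\St_r \otimes T(\hat{\lambda}) \twoheadrightarrow \St_r \otimes L(\lambda)$ is an isomorphism, which splits trivially.

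For $\lambda = 0$, my plan is to produce the splitting via the self-duality of $\St_r$ and a snake-identity argument. Setting $T := T(\hat{0}) = T(2(p^r-1)\rho)$ and letting $\pi \colon T \twoheadrightarrow L(0) = k$ denote the canonical surjection (unique up to scalar since, by the stated fact, $L(0)$ appears with multiplicity one in the head of $T$), I first observe that $\St_r \otimes \St_r$ is a tilting module whose highest weight $2(p^r-1)\rho$ occurs with multiplicity one in $\chi(\St_r)^2$. Hence I may decompose $\St_r \otimes \St_r = T \oplus Y$, where $Y$ is tilting and has all indecomposable summands of strictly smaller highest weight. Let $\beta \colon \St_r \otimes \St_r \to k$ be the trace under the identification $\St_r \otimes \St_r \cong \End(\St_r)$; this is the unique (up to scalar) non-zero $G$-homomorphism from $\St_r \otimes \St_r$ to $k$, since $\Hom_G(\St_r \otimes \St_r, k) = \End_G(\St_r) = k$.

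The crux of the argument is to show that $\beta|_T = c\pi$ for some $c \in k^{\times}$. The same Hom/End identification shows that $L(0)$ has multiplicity one in the $G$-socle of $\St_r \otimes \St_r$. Since by the stated fact this $L(0)$ lies in the socle of $T$, the complementary summand $Y$ has no $L(0)$ in its socle; invoking the $\tau$-self-duality of indecomposable tilting modules, $L(0)$ also does not appear in the head of $Y$, so $\Hom_G(Y, k) = 0$ and hence $\beta|_Y = 0$. Since $\beta \neq 0$, this forces $\beta|_T \neq 0$, and then uniqueness of $\pi$ up to scalar gives $\beta|_T = c\pi$ for some non-zero $c$. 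I then invoke the snake identity
$$(\operatorname{id}_{\St_r} \otimes \beta) \circ (\operatorname{coev} \otimes \operatorname{id}_{\St_r}) = \operatorname{id}_{\St_r}$$
with $\beta$ acting on the last two tensor factors of $\St_r^{\otimes 3}$. Decomposing $\St_r^{\otimes 3} = (\St_r \otimes T) \oplus (\St_r \otimes Y)$ via the last two factors, the map $\operatorname{id} \otimes \beta$ equals $c\pi'$ on $\St_r \otimes T$ and vanishes on $\St_r \otimes Y$, where $\pi' := \operatorname{id}_{\St_r} \otimes \pi$ is the canonical surjection $\St_r \otimes T \to \St_r$. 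Letting $p_T$ denote the projection onto $\St_r \otimes T$, the snake identity becomes $c \pi' \circ p_T \circ (\operatorname{coev} \otimes \operatorname{id}_{\St_r}) = \operatorname{id}_{\St_r}$, so $c \cdot p_T \circ (\operatorname{coev} \otimes \operatorname{id}_{\St_r})$ is a $G$-equivariant splitting of $\pi'$.

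The hard part is establishing that $\beta$ restricts non-trivially to the highest-weight summand $T$; this step hinges on the multiplicity-one statement for $L(\lambda)$ in the socle of $T(\hat{\lambda})$ combined with the $\tau$-self-duality of indecomposable tilting modules.
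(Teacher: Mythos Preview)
Your proof is correct and follows essentially the same approach as the paper's. Both arguments rest on the snake identity (the paper phrases it as ``the canonical map $\St_r\otimes\St_r\otimes\St_r \to \St_r$ splits'') together with the decomposition $\St_r\otimes\St_r = T(\hat 0)\oplus Y$, showing that the evaluation factors through the $T(\hat 0)$-summand; the paper's justification (``the unique trivial submodule of $\St_r\otimes\St_r$ is contained in $T(\hat 0)$'') is exactly the statement whose $\tau$-dual you make explicit to conclude $\beta|_Y=0$.
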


\begin{proof}
If $\lambda = (p^r-1)\rho$, then $\hat{\lambda}=(p-1)\rho$.  So $L(\la) \cong \St_r \cong T(\hat{\lambda})$, and it is immediate that $\St_r \otimes \St_r\rightarrow \St_r \otimes \St_r$ splits. Parts (c) and (d) of Theorem \ref{T:summary} are also immediately evident.

On the other hand, $\hat{0}=2(p^r-1)\rho$ and the question is whether $\St_r\otimes T(\hat{0}) \to \St_r\otimes k \cong \St_r$ splits.  The canonical map $\St_r\otimes\St_r\otimes\St_r \to \St_r$ splits.  Since the unique trivial submodule of $\St_r \otimes \St_r$ is contained in $T(\hat{0})$, this surjection factors through
$$\St_r\otimes\St_r\otimes\St_r \to \St_r \otimes T(\hat{0}) \to \St_r,$$
proving the claim.
\end{proof}


\section{Type $\rm{A}_n$, $n\leq 3$} 

\subsection{} Recall from Propostion \ref{P:equivTMCr=1} that to verify the Tilting Module Conjecture, it suffices to consider the case $r = 1$. In subsequent sections this fact will be used when 
we verify the TMC for other low rank cases.  For $\Phi=\rm{A}_{n}$, $n\leq 3$, by \cite{KN} or \cite{BNPS}, it is known that $\St_{1}\otimes L(\mu)$ admits a good filtration for all $\mu \in X_1$. 
Therefore, by Theorem \ref{T:summary}, in order to verify the TMC in these cases it suffices to prove that $\nabla(\hat{\lambda})$ has a good 
$p$-filtration for all $\lambda\in X_{1}$.

\subsection{\bf Types $\rm{A}_{1}$ and $\rm{A}_{2}$} The TMC was verified earlier by others and is known to hold for all primes.  In the first case, as $h = 2$, this follows from using the 
general $p \geq 2h - 2$ bound.  In the latter case, $h = 3$, so the TMC holds for $p > 3$ using the aforementioned bound. A proof is given for $p=2,3$ in \cite[II.11.16]{rags}. Moreover, 
Donkin \cite{Don} showed that the $G$-liftings on the $G_1$ projective indecomposable modules are unique.     

We observe that our methods provide a quick alternate proof for the type $\rm{A}_2$, $p = 2, 3$ cases.  In order to apply Theorem~\ref{T:summary}, we verify that $\nabla(\hat{\la})$ admits a good $p$-filtration for each $\la \in X_1$.  To that end, identifying  $\nabla(\hat{\la}) = \ind_{B}^{G}\hat{\la} =  \ind_{G_1B}^G \widehat{Z}_1'(\hat{\la}) $, it suffices to show that one of the conditions in Theorem~\ref{T:induction} holds.  Let $L(\s_0)\otimes p\s_1$ be a $G_1B$-composition factor of $\widehat{Z}_1'(\hat{\la})$.  We claim that $R^1\ind_{B}^G\s_1 = 0$ (which would complete this argument).   On the contrary, if $R^1\ind_{B}^G \s_1 \neq 0$, by Proposition \ref{P:weights}, there is a weight $\ga$ of $\St_1$ with $\langle\ga,\a_i^{\vee}\rangle \leq -2p$.   However, since $\langle \rho,\a_0^{\vee}\rangle = 2$, one has $-2(p-1) \leq \langle\ga,\a_i^\vee\rangle \leq 2(p-1)$, thus giving a contradiction.  

\subsection{\bf Type  $\rm{A}_{3}$} In the case when $\Phi=\rm{A}_{3}$, one has $h=4$. Therefore, the TMC holds for $p\geq 2h-2=6$ by Corollary~\ref{C:JantzenCor}. It remains to verify the TMC for $p=2$, $3$ and 
$5$. 

\subsection{\bf Type $\rm{A}_3$, $p=2$} We use the same argument as in the type $\rm{A}_2$ case to show that all $\nabla(\hat{\la}),$ $\la \in X_1,$ have a  good $p$-filtration. 
Since $\langle \rho, \alpha_0^{\vee}\rangle = 3$, one concludes that all weights $\gamma$ of $\St_1$ satisfy $-2p= -4<-3 \leq \langle \gamma, \alpha_i^{\vee}\rangle$ for any simple root $\alpha_i.$ It follows that no weight of $\St_1$ satisfies the inequality \eqref{E:weights}, and the claim follows.

\subsection{\bf Type $\rm{A}_3$, $p=3$} Again we show that $\nabla(\hat{\la})$ has a good $p$-filtration for all $\la \in X_1$.  Since $\langle \rho, \alpha_0^{\vee}\rangle = 3$, one concludes that all weights $\gamma$ of $\St_1$ satisfy $-2p=-6 \leq \langle \gamma, \alpha_i^{\vee}\rangle$ for any simple root $\alpha_i.$ Assume now that 
$L(\s_0) \otimes p\s_1$ is a $G_1B$-composition factor of $\widehat{Z}_1'((p-1)\rho +\mu)$ and 
$R^1\ind_{B}^G\s_1 \neq 0.$ Equation \eqref{E:weights} immediately implies that there exists an $\alpha_i$ with $ \langle \gamma, \alpha_i^{\vee}\rangle= -6$ and $\langle\mu,\alpha_i^{\vee} \rangle =0.$ Moreover, $\s_0=0.$ The following table contains all the weights $\gamma$ in the weight lattice of $\St_1$ with $ \langle \gamma, \alpha_i^{\vee}\rangle=-6$ (each appears with multiplicity one), the unique weight $\mu \in X_1$ such that $\gamma +\mu = p \s_1 
 \in pX, $  the weight $\s_1,$  and the highest weight $\hat{\la}=(p-1)\rho + \mu$ of $\nabla(\hat{\la})$.  

\bigskip
\begin{tabular}{ | c | c | c | c | c |}
\hline
$\gamma$ & $\mu$ &$ \s_1$ &$s_{\alpha_i} \cdot \s_1$&  $(p-1)\rho+ \mu$\\
\hline 
(-6,2,2) & (0,1,1) & (-2,1,1) & (0,0,1)& (2,3,3)\\
\hline
(2,2,-6) & (1,1,0) & (1,1,-2) &(1,0,0)& (3,3,2)\\
\hline

(4,-6,4) & (2,0,2) & (2,-2,2) &(1,0,1)& (4,2,4)\\
\hline
(-6,4,-2) & (0,2,2) & (-2,2,0) & (0,1,0)& (2,4,4)\\
\hline
(-2,4,-6) & (2,2,0) & (0,2,-2) & (0,1,0)& (4,4,2)\\
\hline
(2,-6,2) & (1,0,1) & (1,-2,1) &(0,0,0) & (3,2,3)\\
\hline
\hline
(0,3,-6) & (0,0,0) & (0,1,-2) &(0,0,0)& (2,2,2)\\
\hline
(-6,3,0) & (0,0,0) & (-2,1,0) &(0,0,0)& (2,2,2)\\
\hline
(3,-6,3) & (0,0,0) & (1,-2,1) &(0,0,0)& (2,2,2)\\
\hline
\end{tabular}
\bigskip

The last three entries are not of interest because $\nabla(2,2,2)$ is just the Steinberg module, which, being simple, admits a good $p$-filtration. Consider Theorem~\ref{T:induction}.  
The only situation of concern is if there appears another composition factor $L(0) \otimes p\eta$ with $(p-1)\rho +\mu \geq p\eta >p s_{\alpha_i} \cdot \s_1$ such that there is a non-zero and non-isomorphic map
from $\nabla(\eta)$ to $R^1\ind_{B}^G\s_1 = \nabla(s_{\alpha_i} \cdot \s_1)$.  Note that in each case of the table $\nabla(s_{\a_i}\cdot \s_1) \cong  L(s_{\a_i}\cdot \s_1)$.   Furthermore, $\eta$ is potentially problematic only if $\nabla(\eta) \not\cong L(\eta)$. From linkage information, one can see that this fails  in all but the first two (symmetric) cases. Note that $\nabla(1,1,0)/L(1,1,0) \cong L(0,0,1)$  and that $(3,3,0) < (2,3,3).$  By symmetry, it remains to show that $\nabla(2,3,3)$ has a good $p$-filtration. We will prove the equivalent statement that $\Delta(2,3,3)$ has a $p$-Weyl filtration.

The formal characters of all simple modules with restricted highest weight were first determined in \cite{Jan74}. They can also be obtained via the tables of \cite{L}. The highest weights of the $G$-composition factors of $\Delta(2,3,3)$ can therefore be calculated. They are
$ (2, 3, 3 )$, $( 3, 1, 4 ),$ $( 2, 4, 1 ),$ $ ( 1, 2, 4 ),$ 
  $( 4, 0, 3),$ $( 3, 3, 0 ),$ $( 1, 4, 0 ),$ $( 4, 1, 1 ),$ 
 $ ( 0, 2, 3),$ $ ( 0, 3, 1),$ $ ( 3, 0, 2 ),$
  $ ( 5, 0, 0 ),$ 
  $( 1, 1, 2 ),$ $ ( 0, 0, 3 ),$ $ ( 0, 1, 1),$ and  $ ( 1, 0, 0 ). $ All appear with multiplicity one, except for $(5,0,0)$, $(0,3,1)$, $(0,0,3)$ and $(1,0,0),$ which appear with multiplicities $2,$ $2$, $3$ and $2$, respectively. 
  
  We want to directly show that a $p$-Weyl filtration can be constructed.  
  Consider a composition factor $L(\mu_0)\otimes L(\mu_1)^{(1)}$ of $\Delta(2,3,3)$.   If $L(\mu_1) = \Delta(\mu_1)$ for all $\mu_1$, then the composition series would immediately give a $p$-Weyl filtration.
  However, there are exactly two composition factors with $L(\mu_1) \neq \Delta(\mu_1).$ These have highest weights  $(2,3,3)$ and $(3,3,0),$ respectively. 
 In the following, we construct a non-zero homomorphism $\phi:\Delta(3,3,0) \to  \Delta(2,3,3). $ It is then shown that the image of $\phi,$ denoted by $S,$ and the cokernel of $\phi,$ denoted by $Q,$ have $p$-Weyl filtrations, thus producing the desired filtration for $\Delta(2,3,3).$ 
  
   Observe that in the above list of highest weights for the composition factors of $\Delta(2,3,3)$ only $(2,3,3)$, $(3,1,4)$ and $(2,4,1)$ are strictly greater than $(3,3,0).$ 
  To verify the existence of $\phi$ we make use of the Jantzen filtration for the Weyl module $\Delta(2,3,3).$ Given
  $$\Delta(2,3,3)=V^0\supseteq V^1 \supseteq V^2 \supseteq  \cdots,$$ with $V^i$ as defined in \cite[II.8.19]{rags}, 
 set $SF = \sum_{i>0} \text{ch } V^i.$ A straightforward calculation shows that the characters of $L(3,1,4)$ and $L(2,4,1)$ appear exactly once in $SF$ while $\text{ch }L(3,3,0)$ appears twice. Note that $L(3,3,0)$ is a composition factor of both $\Delta(3,1,4)$ and $\Delta(2,4,1)$ whose characters appear once in the sum formula $SF$. Since the multiplicity of $L(3,3,0)$ in $\Delta(2,3,3)$ is one, we conclude that $L(3,3,0)$ appears in $V^2$ while $L(3,1,4)$ and $L(2,4,1)$ do not. Therefore, $(3,3,0)$ is a maximal weight of $V^2$ and one obtains the desired non-trivial homomorphism  $\phi:\Delta(3,3,0) \to V^2\hookrightarrow \Delta(2,3,3).$ 
 
Next we show that $S$ has a $p$-Weyl filtration. The composition factors of  $\Delta (3,3,0)$ have highest weights 
\begin{equation}\label{E:comp}
(3, 3, 0), ( 1, 4, 0 ), ( 4, 1, 1 ), ( 0, 3, 1), ( 3, 0, 2 ), ( 1, 1, 2 ), ( 0, 0, 3), (1, 0, 0 ).
\end{equation}
As discussed above, only the weight $(3,3,0)$ is potentially problematic.   We show that there exists a short exact sequence
\begin{equation}\label{E:ses1}
0 \to S_1 \to S \to S_2 \to 0 
\end{equation} 
where $S_2 \cong \Delta(1,1,0)^{(1)}$
and $S_1$ (is zero or) has composition factors whose highest weights come from \eqref{E:comp}, except $(3,3,0)$.  Hence, both $S_1$ (if non-zero) and $S_2$ would admit $p$-Weyl filtrations.
To this end, first note that there is a projection of $\Delta (3,3,0)$ onto $\Delta(1,1,0)^{(1)}$ and the radical of $\Delta(1,1,0)$ is just $L(0,0,1).$ 
This yields a $G$-extension between $L(3,3,0)$  and $L(0,0,3).$ We claim that the only composition factor $L(\mu)$  of  $\Delta(3,3,0)$ with $\Ext_{G}^1(L(3,3,0), L(\mu))\neq 0$ is $L(0,0,3).$
Consider the Lyndon-Hochschild-Serre spectral sequence:
$$
E_2^{i,j} = \Ext_{G/G_1}^i(L(3,3,0),\Ext_{G_1}^j(k,L(\mu))) \Rightarrow \Ext_{G}^{i + j}(L(3,3,0),L(\mu)).
$$
For $\mu \neq (0,0,3)$, we have
\begin{align*}
\Ext_{G}^{1}(L(3,3,0),L(\mu))&\cong \Hom_{G/G_1}\left(L(3,3,0),\Ext_{G_1}^{1}(k,L(\mu_0)\otimes L(\mu_1)^{(1)})\right) \\
	&\cong \Hom_{G}(L(1,1,0),\Ext_{G_1}^{1}(k,L(\mu_0))^{(-1)}\otimes L(\mu_1)).
\end{align*}

The following table lists all restricted weights $\mu_0$ with $\Ext_{G_1}^1(k, L(\mu_0)) \neq 0$ (cf. \cite{Jan74, Jan91}).  Note that the first entry is the only non-trivial $G$-extension with a restricted weight. The other extensions arise via $\Ext_{G_1}^1(k,\nabla(p\omega_i-\alpha_i))$, where $\omega_i$ denotes a fundamental weight and $\a_i$ the associated simple root.

\bigskip
\begin{tabular}{ | c  | c | c | c | c |}
\hline
$\mu_0$ & $(0,2,0)$ &$(1,1,0)$&$(0,1,1)$&$(1,1,1)$\\
\hline
$\Ext_{G_1}^1(k, L(\mu_0))^{(-1)}$ & $k$& $L(1,0,0)$&$L(0,0,1)$&$L(0,1,0)$\\
\hline
\end{tabular}
\bigskip

From the table we conclude that  we only have to rule out the weights $(1,4,0)$ and $ ( 4, 1, 1 )$ from \eqref{E:comp}. Note that
\begin{eqnarray*} 
\Ext_{G}^1(L(3,3,0), L(1,4,0)) &\cong& \Hom_{G}\left(L(1,1,0), \Ext_{G_1}(k,L(1,1,0))^{(-1)} \otimes L(0,1,0)\right)\\
&\cong&  \Hom_{G}(L(1,1,0), L(1,0,0) \otimes L(0,1,0)).
\end{eqnarray*}
But $ L(1,0,0) \otimes L(0,1,0)$ is the indecomposable tilting module with highest weight $(1,1,0)$ and simple socle $L(0,0,1).$ Therefore, the $\Ext$-group vanishes.
Similarly,
\begin{eqnarray*}
\Ext_{G}^1(L(3,3,0), L(4,1,1)) &\cong& \Hom_{G}\left(L(1,1,0), \Ext_{G_1}(k,L(1,1,1))^{(-1)} \otimes L(1,0,0)\right)\\
&\cong&  \Hom_{G}(L(1,1,0), L(0,1,0) \otimes L(1,0,0))=0.
\end{eqnarray*}
We conclude that the second radical layer of $\Delta(3,3,0)$ is simple and isomorphic to $L(0,0,3).$ This implies that the  short exact sequence \eqref{E:ses1}  indeed exists, unless $S =L(3,3,0).$ However,
\begin{eqnarray*}
 \Hom_G(L(3,3,0), \Delta(2,3,3))& \hookrightarrow& \Hom_G\left(L(1,1,0)^{(1)}, \St_1 \otimes L(0,1,1)\right)\\
&\cong& 
 \Hom_G(\St_1 \otimes L(1,1,0)^{(1)},  L(0,1,1))\\
 &\cong& 
 \Hom_G(L(5,5,2),  L(0,1,1))
 =0.
\end{eqnarray*}
 Therefore, since $S_1$ (if non-zero) and $S_2$ have $p$-Weyl filtrations, so does $S.$

 Finally, observe also that the weight $(5,0,0)$ does not appear in (\ref{E:comp}).  Therefore, $L(5,0,0)$ is not a composition factor of $S$.  Recall that $\Delta(0,1,1)$ has two composition factors: $L(0,1,1)$ and $L(1,0,0)$. Therefore, the surjection of $\Delta (2,3,3)$ onto $L(2,0,0) \otimes \Delta(0,1,1)^{(1)}$ yields a surjection of the cokernel  $Q=\Delta (2,3,3)/S$ onto  $L(2,0,0) \otimes \Delta(0,1,1)^{(1)}.$ 
 We obtain a short exact sequence 
 $$0 \to Q_1 \to Q \to L(2,0,0) \otimes \Delta(0,1,1)^{(1)} \to 0.$$
The highest weights of the composition factors of $Q_1$ include neither $(3,3,0)$ nor $(2,3,3).$ This implies that $Q_1$ and $Q$ also afford  $p$-Weyl filtrations.  Hence,  $\Delta(2,3,3)$ has a $p$-Weyl filtration and the assertion follows.

\subsection{\bf Type $\rm{A}_3$, $p=5$} 
Since $p=2h-3$, it follows from Theorem \ref{T:2h-31} that the verification of the  TMC has been reduced to weights in the closure of the lowest alcove. Note that the highest  root $\a_0= (1,0,1)$ lies on the alcove wall separating the lowest alcove  from the second lowest alcove. Our first goal is to verify the TMC for the weight $2(p-1)\rho + w_0\a_0 = 2(p-1)\rho - \a_0$. We begin with a weight estimate that will be used 
later in this section. 

 \begin{lemma}\label{L:A3p5} Let $G$ be of type  $\rm{A}_3$ with $p=5$.  If $\la \in X_+$ with $\la  \neq (p-1)\rho-\a_0$ and $(p-1)\rho + \la \uparrow 2(p-1)\rho -\a_0,$ then $\langle \la, \alpha_0^{\vee} \rangle < p(h-2).$
\end{lemma}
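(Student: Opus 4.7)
The plan is to combine a partial-order estimate with an explicit linkage check. Since $(p-1)\rho + \la \uparrow 2(p-1)\rho - \a_0$ implies $(p-1)\rho + \la \leq 2(p-1)\rho - \a_0$, one can write
$$(p-1)\rho - \a_0 - \la = c_1\a_1 + c_2\a_2 + c_3\a_3$$
with each $c_i \geq 0$. In type $\rm{A}_3$ we have $\langle \a_1, \a_0^\vee\rangle = \langle \a_3, \a_0^\vee\rangle = 1$ and $\langle \a_2, \a_0^\vee\rangle = 0$, while $\langle (p-1)\rho - \a_0, \a_0^\vee\rangle = 3(p-1) - 2 = 10 = p(h-2)$. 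Pairing the displayed identity with $\a_0^\vee$ therefore yields
$$\langle \la, \a_0^\vee\rangle = p(h-2) - (c_1 + c_3) \leq p(h-2),$$
with equality if and only if $c_1 = c_3 = 0$.

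It remains to exclude the equality case. If $c_1 = c_3 = 0$, then $\la = (p-1)\rho - \a_0 - c_2\a_2 = (3+c_2,\, 4-2c_2,\, 3+c_2)$ in fundamental-weight coordinates, and the condition $\la_2 \geq 0$ restricts $c_2$ to $\{0,1,2\}$. The value $c_2 = 0$ gives the excluded weight $(p-1)\rho - \a_0 = (3,4,3)$, so only $\la = (4,2,4)$ and $\la = (5,0,5)$ need to be ruled out. For each of these I would verify that $(p-1)\rho + \la$, equal respectively to $(8,6,8)$ and $(9,4,9)$, is not $W_p$-linked to $2(p-1)\rho - \a_0 = (7,8,7)$; this would contradict the assumption $\uparrow$ and complete the proof.

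The linkage verification is a direct orbit calculation. Using the standard realization of $\rm{A}_3$-weights inside $\mathbb{Z}^4$ via $\omega_i \mapsto \varepsilon_1 + \cdots + \varepsilon_i$ (modulo the diagonal $\mathbb{Z}(1,1,1,1)$), the $\rho$-shifts of the three dominant weights become the quadruples $(25,17,8,0)$, $(25,16,9,0)$, and $(25,15,10,0)$. The affine Weyl group $W_p$ acts by $S_4$-permutation together with translations by $p$ times the root lattice, so on the $\rm{SL}_4$-weight lattice two elements are linked iff the multisets of the coordinates of their $\rho$-shifts, reduced modulo $p=5$, agree after a uniform shift (the extra freedom coming from the diagonal quotient). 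The three relevant multisets are $\{0,0,2,3\}$, $\{0,0,1,4\}$, and $\{0,0,0,0\}$; the five uniform $\mathbb{Z}/5$-shifts of the first are $\{0,0,2,3\}$, $\{1,1,3,4\}$, $\{2,2,4,0\}$, $\{3,3,0,1\}$, $\{4,4,1,2\}$, and none of these equals $\{0,0,1,4\}$ or $\{0,0,0,0\}$. Hence the candidate weights are not linked to $(7,8,7)$, as required.

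The main obstacle is setting up this last orbit check correctly: one must combine the $p$-scaled root-lattice translations with the diagonal $\mathbb{Z}(1,1,1,1)$ freedom (from passing to $\rm{SL}_4$) to obtain the right equivalence relation on multisets. Once this is handled cleanly, the two exceptional weights fall out by an elementary enumeration, and the partial-order bound completes the argument.
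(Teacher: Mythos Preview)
Your proof is correct. The opening partial-order estimate, showing that equality forces $c_1=c_3=0$ and hence $\la=(p-1)\rho-\a_0-c_2\a_2$, is exactly how the paper begins (phrased there as: all reflections in the $\uparrow$-chain must preserve the $\a_0^\vee$-pairing, hence must be of the form $s_{\a_2,kp}$). The two arguments diverge only in how the remaining candidates are eliminated. You enumerate the dominant possibilities $c_2\in\{1,2\}$ and rule them out by a direct residue-multiset check showing that $(8,6,8)$ and $(9,4,9)$ are not $W_p$-linked to $(7,8,7)$. The paper instead exploits the $\uparrow$-chain itself: the first reflection $s_{\a_2,kp}$ applied to $2(p-1)\rho-\a_0$ already forces a shift of at least $(p-1)\a_2=4\a_2$, and any subsequent steps only increase this, so $(p-1)\rho+\la=2(p-1)\rho-\a_0-n\a_2$ with $n\ge 4$, which is never in $(p-1)\rho+X_+$. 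The paper's route avoids the orbit calculation entirely and gives a cleaner contradiction from dominance alone; your route is more hands-on but equally valid, since you only use the residue criterion as a necessary condition for linkage (which it certainly is).
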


\begin{proof} If $\la \neq (p-1)\rho -\a_0$ and $(p-1)\rho + \la \uparrow 2(p-1)\rho -\a_0$ then there exists a positive integer $m,$ a sequence of reflections $s_1, s_2, ... , s_m \in W_p,$ and weights $\mu_1, \mu_2, ... , \mu_{m-1} \in X^+$ with 
$$2(p-1)\rho - \a_0 >  s_1 \cdot (2(p-1)\rho - \a_0) =\mu_{1} >  s_2\cdot \mu_1 = \mu_2 > \cdots >  s_m\cdot \mu_{m-1}=(p-1)\rho + \la.$$
Assume that $\langle \la, \alpha_0^{\vee} \rangle  =\langle (p-1)\rho-\a_0, \alpha_0^{\vee} \rangle= p(h-2).$ Then  
$$\langle 2(p-1)\rho -a_0, \alpha_0^{\vee} \rangle =\langle \mu_i, \alpha_0^{\vee} \rangle= \langle(p-1)\rho + \la, \alpha_0^{\vee} \rangle, \mbox{ for all } 1 \leq i \leq m-1.$$
Note that the only positive root perpendicular to $\alpha_0$ is $\alpha_2.$ 
Therefore,   all $s_i$ would have to be of the form $s_{\a_2,kp},$ for some  integer $k$ \cite[II.6.1]{rags}. Moreover, for  $s_1$,  we have $k\leq 1.$   
It follows that 
$$s_1 \cdot (2(p-1)\rho - \a_0) = 2(p-1)\rho - \a_0 - (2p-kp-1)\alpha_2 \leq 2(p-1)\rho - \a_0 - (p-1)\alpha_2.$$
One concludes that  $(p-1)\rho + \la$ is equal to $2(p-1)\rho - \a_0 - n \alpha_2$ for some $n \geq p-1.$ 
However,
no weight of this form is contained in $(p-1)\rho + X_+,$ a contradiction.
\end{proof}

We can now show that the TMC holds for $T(2(p-1)\rho - \a_0) $. 

\begin{prop}\label{P:A3p5} Let $G$ be of type  $\rm{A}_3$ with $p=5$.  Then $T(2(p-1)\rho - \a_0)\mid_{G_{1}T} \cong \widehat{Q}_1(\a_0)$ as a $G_1T$-module.
\end{prop}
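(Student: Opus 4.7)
The plan is to verify condition (c) of Theorem \ref{T:summary} for $\mu = \alpha_0$, that is, to show that $\nabla(\hat{\alpha_0}) = \nabla(2(p-1)\rho - \alpha_0)$ admits a good $p$-filtration. Once this is established, the inductive argument of Section \ref{S:Idea}, combined with previously verified cases of TMC in the region $\mu >_{\mathbb{Q}} \alpha_0$, yields the desired conclusion. The pivotal observation is that for type $\rm{A}_3$ at $p=5$, the constraint from Proposition \ref{P:weights} succeeds by a tight margin, precisely because $\widetilde{\mu} := (p-1)\rho + w_0\alpha_0 = (3,4,3)$ has all coordinates at least $3$ while $\St_{1}$-weights pair with simple coroots in $[-12,12]$.

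First, I would write $\hat{\alpha_0} = (p-1)\rho + \widetilde{\mu}$, so $\nabla(\hat{\alpha_0}) = \ind_{G_{1}B}^{G}\widehat{Z}_{1}^{\prime}((p-1)\rho + \widetilde{\mu})$. To apply Theorem \ref{T:induction}(a), one checks that $R^{1}\ind_{B}^{G}\sigma_1 = 0$ for every $G_{1}B$-composition factor $L(\sigma_0) \otimes p\sigma_1$ of this baby Verma module. Suppose for contradiction that $R^{1}\ind_{B}^{G}\sigma_1 \neq 0$ for some factor. Proposition \ref{P:weights} then produces a simple root $\alpha_k$ and a weight $\gamma$ of $\St_{1}$ with
\[
\langle \gamma, \alpha_k^{\vee}\rangle \leq -2p - \langle \sigma_0, \alpha_0^{\vee}\rangle - \langle \widetilde{\mu}, \alpha_k^{\vee}\rangle \leq -10 - 0 - 3 = -13,
\]
using $\sigma_0 \in X_1$ dominant, $-2p = -10$, and $\langle \widetilde{\mu}, \alpha_k^{\vee}\rangle = \widetilde{\mu}_k \geq 3$.

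Second, I would describe the weights of $\St_{1} = L((p-1)\rho)$ for type $\rm{A}_3$ with $p=5$. In the standard $\epsilon$-basis realization of the weight lattice as $\mathbb{Z}^{4}/\langle(1,1,1,1)\rangle$, the weight $(p-1)\rho = 4\rho$ becomes $(6,2,-2,-6)$, and every weight of $\St_{1}$ lies in the convex hull of the $S_{4}$-orbit of $4\rho$. Since a simple coroot acts by $\langle \gamma, \alpha_k^{\vee}\rangle = \gamma_k - \gamma_{k+1}$, this pairing is bounded below by $-12$ on the convex hull. Therefore no weight $\gamma$ of $\St_{1}$ achieves $\leq -13$, contradicting the previous step. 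Hence $R^{1}\ind_{B}^{G}\sigma_1 = 0$ for every composition factor, and Theorem \ref{T:induction}(a) delivers a good $p$-filtration on $\nabla(\hat{\alpha_0})$.

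Finally, I would invoke the inductive step of Section \ref{S:Idea}. The $p$-restricted weights $\mu >_{\mathbb{Q}} \alpha_0$ that could contribute a summand $\widehat{Q}_{1}(\mu)$ to $T(\hat{\alpha_0})|_{G_{1}T}$ must lie in the $W_{p}$-linkage class of $\alpha_0$. For those with $\mu \notin \overline{C}_{\mathbb{Z}}$, the TMC is provided by Theorem \ref{T:2h-31}, and the required splitting of $\St_{1} \otimes T(\hat{\mu}) \to \St_{1} \otimes L(\mu)$ is furnished by Lemma \ref{L:A3p5} together with Lemma \ref{L:2h-32} applied with $\sigma = (3,4,3)$: these give the injective-and-projective structure of $\St_{1} \otimes L((p-1)\rho + w_{0}\mu)$ in the truncated category $\operatorname{Mod}(2(p-1)\rho - \alpha_0)$. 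The only candidate in $\overline{C}_{\mathbb{Z}}$ strictly greater than $\alpha_0$ is $(0,2,0)$, which lies in a distinct $W_{p}$-orbit from $\alpha_0$ (a direct check in the $\epsilon$-basis shows that $(0,2,0)+\rho$ and $\alpha_0+\rho$ are inequivalent modulo the $W$-action composed with translations in $pQ$), so it contributes no summand. Combining the good $p$-filtration with the inductive step then yields $T(2(p-1)\rho - \alpha_0)|_{G_{1}T} \cong \widehat{Q}_{1}(\alpha_0)$.

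The main obstacle is precisely the razor-thin margin in the weight bound: the lower bound $-12$ on $\St_{1}$-pairings just undercuts the required $-13$, and this is only possible because every coordinate of $\widetilde{\mu}$ is at least $3$, a phenomenon specific to type $\rm{A}_3$ at $p = 2h-3 = 5$. Verifying that the same margin works uniformly for all relevant $\mu$ in the splitting portion of the argument, rather than only for $\alpha_0$ itself, is the subtle step requiring the full strength of Lemma \ref{L:A3p5}.
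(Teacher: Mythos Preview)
Your argument contains a genuine confusion about how the inductive machinery of Theorem~\ref{T:summary} works. Establishing condition (c) at $\mu=\alpha_0$, i.e.\ a good $p$-filtration on $\nabla(\hat{\alpha_0})$, does \emph{not} yield the TMC at $\alpha_0$. Re-read the inductive step in Section~\ref{S:Idea}: to conclude $T(\hat{\lambda})|_{G_1T}\cong\widehat{Q}_1(\lambda)$ for $\lambda=\alpha_0$, one needs, for every restricted $\mu>_{\mathbb Q}\alpha_0$, both that the TMC holds at $\mu$ \emph{and} that the map $\St_1\otimes T(\hat{\mu})\to\St_1\otimes L(\mu)$ splits. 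The good $p$-filtration on $\nabla(\hat{\alpha_0})$ would (via Theorem~\ref{T:pfilttosplit}) feed into the splitting condition at $\alpha_0$, which is input for weights \emph{below} $\alpha_0$, not for $\alpha_0$ itself. So your first two paragraphs, while correct as a weight computation, are irrelevant to the stated goal.

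Your third paragraph then tries to supply the splitting for $\mu>_{\mathbb Q}\alpha_0$ by invoking Lemmas~\ref{L:A3p5} and~\ref{L:2h-32} with $\sigma=(3,4,3)$, but this misfires: those lemmas give that $\St_1\otimes L((p-1)\rho+w_0\mu)$ is injective/projective in $\operatorname{Mod}(2(p-1)\rho-\alpha_0)$, whereas the splitting you need concerns $\St_1\otimes L(\mu)$ inside a category large enough to contain $\St_1\otimes T(\hat{\mu})$, whose highest weight $(p-1)\rho+\hat{\mu}$ lies far above $2(p-1)\rho-\alpha_0$. The paper's proof avoids Theorem~\ref{T:summary} entirely here: it argues directly that if some $L(\nu)$ with $\nu\neq\alpha_0$ lay in the $G$-socle of $T(\hat{\alpha_0})$, then Lemmas~\ref{L:A3p5} and~\ref{L:2h-32}(b) would make the corresponding tilting module $T((p-1)\rho+\lambda)$ both the injective hull and projective cover of $L(\nu)$ in $\operatorname{Mod}(2(p-1)\rho-\alpha_0)$, and an injectivity/projectivity lift then forces it to split off $T(\hat{\alpha_0})$ as a $G$-summand, contradicting indecomposability. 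You have located the right lemmas but are using them for the wrong conclusion.
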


\begin{proof} Set $ T=T(2(p-1)\rho-\a_0).$ Note that $T$ is projective and injective as a $G_1$-module but not necessarily as a $G$-module in $\textup{Mod}(2(p-1)\rho- \a_0).$ 

It suffices to show that $T$ has a simple $G$-socle, namely $L(\a_0)$. Assume that   there exist weights $\la_0 \in X_1$ and $\la_1 \in X_+$ 
such that 
$(p-1)\rho+w_0\la_0 +p\la_1\neq \a_0$
and  $L((p-1)\rho+w_0\la_0 +p\la_1)$ appears in the socle of $T.$ Then $\widehat{Q}_1((p-1)\rho+ w_0\lambda_0) \otimes L(\la_1)^{(1)}$ has to appear as a $G_1T$-summand of $T$. We set $\la = \la_0 +p\la_1.$ The highest weight of $\widehat{Q}_1((p-1)\rho+ w_0\lambda_0) \otimes L(\la_1)^{(1)}$is $(p-1)\rho +\la.$ This weight is strongly linked to $2(p-1)\rho - \a_0$ and strictly less than $2(p-1)\rho - \a_0$. 

Now Lemma ~\ref{L:2h-32} (with $\s = (p-1)\rho - \a_0$) and Lemma~\ref{L:A3p5}  imply  that 
$T((p-1)\rho+\la_0) \otimes L(\la_1)^{(1)}$ is the injective hull and projective cover of  $L((p-1)\rho+w_0\la_0 +p\la_1)$ in $\textup{Mod}(2(p-1)\rho - \a_0).$ Given the $G$-injections 
$i:L((p-1)\rho+w_0\la_0 +p\la_1) \to T$ and $j:L((p-1)\rho+w_0\la_0 +p\la_1) \to T((p-1)\rho + \la),$ 
there exists a $G$-map $\phi : T \to T((p-1)\rho + \la)$ and a $G_1T$-map $\psi:T((p-1)\rho + \la) \to T$ such that
$i = \phi \circ j$ and $j =  \psi \circ i.$ 
Hence, $i = \phi \circ \psi \circ i.$ Since $T((p-1)\rho + \la)$ has a simple $G$-socle, the $G$-map $\phi$ has to be a surjection. But now the projectivity of $T((p-1)\rho + \la)$ in $\textup{Mod}(2(p-1)\rho- \a_0)$ makes it a $G$-summand, a contradiction. 
\end{proof}

 It follows from Theorem \ref{T:2h-31} that the TMC holds for all $Q_1(\mu)$ as long as $\mu$ is restricted and  not contained in the closure of the lowest alcove. If $\mu$ is in the upper wall of the lowest alcove, the claim follows from Proposition \ref{P:A3p5} by using the translation principle (cf. \cite[II.11.10, II.E.11]{rags}) for other weights in the same facet as $\a_0$. The translation principle also  implies that 
$$T_{\a_0}^{0} [T(2(p-1)\rho - \a_0)] \cong T_{\a_0}^{0} [Q_1(\a_0)] = Q_1(0),$$ as $G_1T$-modules, which forces $T(2(p-1)\rho)\mid_{G_{1}T}\cong Q_1(0).$ Translation within the alcove takes care of the remaining weights in the interior of the lowest alcove. Consequently, the TMC holds for type $\rm{A}_3$ and $p=5$.


\section{Type $\rm{B}_{2}$}

\subsection{} In the case when $\Phi=\rm{B}_{2}$, one has $h = 4$. As the TMC is known to hold for $p \geq 2h - 2$, the only open cases are the primes $p = 2, 3, 5$.  By \cite{KN}, we know that $\St_{1}\otimes L(\mu)$ has a good filtration for all $\mu \in X_1$, and so to verify the TMC we can apply Theorem \ref{T:summary}. Andersen \cite[Example 2 (3)]{And19} verified that $\nabla(\sigma)$ has a good $p$-filtration for all $\sigma\in X_{+}$ 
(with lengthy calculations), and one can appeal to this result to finish off the $\rm{B}_{2}$-case. For the reader's convenience and to make the this paper self-contained, we present short arguments to handle the 
$\Phi=\rm{B}_{2}$, $p = 2, 3, 5$ cases.

\subsection{Type $\rm{B}_{2}$, $p=2$} Since $\langle \rho, \alpha_0^{\vee}\rangle = 3.$ We may make use of Proposition~\ref{P:weights}. The same argument as in the type $\rm{A}_3$ and $p=2$ case works here.

\subsection{Type $\rm{B}_{2}$, $p=3$} Using either the tables in \cite{L} or by applying the Jantzen filtration, one can see that  that any $\eta \in X_+$ with $\Ext_G^1(k, L(\eta))\neq 0$ satisfies $6 \leq \langle \eta, \alpha_0^{\vee} \rangle ,$ with equality holding for $\eta= (1,4).$
Therefore, $p\langle\eta,\a_0^{\vee}\rangle \geq 6p > 6(p-1) = 2(p-1)(h-1)$, and so the claim follows immediately from  Theorem \ref{T:pairingbound}.

\subsection{Type $\rm{B}_{2}$, $p=5$}  We will apply Theorem \ref{T:summary} and show that all $\nabla((p-1)\rho + \mu)$ with $\mu \in X_1$ have a good $p$-filtration. Proposition \ref{P:translation} implies that it is sufficient to show this for all $(p-1)\rho + \mu$ in the principal block. Therefore, we only have to consider $\mu \in \{(1,2), (2,2), (2,4), (4,4)\}.$ We wish to apply Theorem \ref{T:induction}.  

Consider a $G_1B$-composition factor $L(\s_0)\otimes p\s_1$ of $\widehat{Z}_1'((p-1)\rho + \mu)$.  Any weight $\ga$ in the weight lattice of $\St_1$ satisfies $\langle \ga, \alpha_1^{\vee}\rangle \geq -6$ and $\langle \ga, \alpha_2^{\vee}\rangle \geq -12.$ Applying  Proposition \ref{P:weights} to the weights $\mu$ from the above list, one immediately observes that $R^1\ind_{B}^{G} \s_1 \neq 0$ only if   $\alpha_i = \alpha_2$, $\langle \ga, \alpha_2^{\vee}\rangle = -12,$ $\mu = (1,2)$ or $\mu=(2,2),$ and $\sigma_0=0.$ Moreover, we need $\ga+\mu \in pX_+.$ The only pairs $(\ga, \mu)$ satisfying these conditions are $((4,-12), (1,2))$, $((3,-12), (2,2))$ and $((8,-12), (2,2))$. 

In the first two cases $\ga+ \mu = p s_{\alpha_2} \cdot (0,0)$ and in the last case $\ga+ \mu = p s_{\alpha_2} \cdot (1,0).$
As in the type $\rm{A}_3$ with $p=3$ discussion, one observes that the only situation of concern is the existence of a second composition factor $L(0) \otimes p\s$ in $\widehat{Z}_1'((p-1)\rho +\mu)$ such that $\nabla(\s)$  has  $L(0,0)$ or $L(1,0)$ as a composition factor. However, the smallest dominant weight  that is linked to $(0,0)$ is $(2,0)$ and the smallest dominant weight that is linked to $(1,0)$ is $(1,4).$  But, for our choices of $\mu$, neither  $p(2,0)$ nor $p(1,4)$ are less than $(p-1)\rho + \mu.$ Cancellation cannot happen and  $\nabla((p-1)\rho + \mu)$ has the desired $p$-filtration.


\section{Type $\rm{G}_2$, $p\neq 7$}

\subsection{} For $\Phi=\rm{G}_{2}$, $h = 6$. As the TMC is known for $p \geq 2h-2$, this leaves the cases when $p=2$, $3$, $5$ and $7$. 
We will handle the first three cases in this section, and undertake the more detailed $p=7$ case in the next section. 

\subsection{Type $\rm{G}_{2}$, $p=2$} For $p = 2$, in \cite{BNPS2}, the authors have shown that the Tilting Module Conjecture in fact fails in type $\rm{G}_2$.   

\subsection{Type $\rm{G}_2$, $p=3$}\label{S:G2} This case provides a very interesting subexample of the lifting problem. 
Here, there is a strict endomorphism $\sigma: G \to G$ with $\sigma^2 = {F}$ (the Frobenius morphism).  Following the notation in \cite{BNPPSS}, let $G_{1/2}$ denote the scheme-theoretic kernel of $\sigma$. This is a normal subgroup scheme of the first Frobenius kernel $G_1$ of $G$.  We will prove that the projective indecomposable $G_{1/2}$-modules lift to tilting modules for $G$, which will prove that all of those for the Frobenius kernel $G_{1}$ do as well (cf. Proposition~\ref{P:equivTMCr=1}). 

We briefly review some key concepts related to $\sigma$ and $G_{1/2}$. For more details, we refer the reader to \cite[Section 2.2]{BNPPSS} and \cite[Sections 5.3 and 5.4]{Hum}.   Following Bourbaki, $\alpha_1$ will denote the short simple root and $\alpha_2$ will denote the long simple root.  The Lie algebra of $G_{1/2}$ is associated to the ideal of short roots within the Lie algebra of $G$ (a root system of type $\rm{A}_2$).  Given a $G$-module $M$, let $M^{(1/2)}$ denote the module $M$ with action composed with $\sigma$. Note that $(M^{(1/2)})^{(1/2)} \cong M^{(1)}$.   In particular, consider the fundamental simple modules, $L(1,0)^{(1/2)} \cong L(0,1)$ and 
\begin{equation}
L(0,1)^{(1/2)} \cong L(3,0) \cong L(1,0)^{(1)}.
\end{equation}  
More generally, Steinberg's Tensor Product Theorem in this context (for non-negative integers $a$, $b$) is 
\begin{equation}\label{E:TPT}
L(a,b) \cong L(a,0)\otimes L(0,b) \cong L(a,0)\otimes L(b,0)^{(1/2)}.
\end{equation}
Note that the weights $X_{1/2}=\{(a,0) ~|~ 0 \leq a \leq 2\}$ are the $\sigma$-restricted dominant weights, and these index the irreducible $G_{1/2}$-modules, which arise by restriction of $L(a,0)$.  

For a weight $\lambda$, analogous to the definitions of $\widehat{Q}_1(\lambda)$ and $Q_1(\lambda)$ (or $Q_r$ more generally), one can define 
$\widehat{Q}_{1/2}(\lambda)$  to be the injective hull (equivalently projective cover) of $L(\lambda)$ as a $G_{1/2}T$-module
and $Q_{1/2}(\lambda)$ as the injective hull over $G_{1/2}$ (which will be the restriction of $\widehat{Q}_{1/2}(\lambda)$ to $G_{1/2}$).  
As with the ordinary Frobenius kernels, we may identify $G_1/G_{1/2} \cong G_{1/2}^{(1/2)}$, and we have various Lyndon-Hochschild-Serre spectral sequences (e.g., for $G_{1/2} \unlhd G_1$).   The argument as in \cite[Lemma II.11.15]{rags} yields the following result.

\begin{prop}\label{P:Q1tensor} For $0 \leq a, b \leq 2$, there is an isomorphism of $G_{1/2}T$-modules
$$
\widehat{Q}_1(a,b) \cong \widehat{Q}_{1/2}(a,0)\otimes \widehat{Q}_{1/2}(b,0)^{(1/2)}.
$$
\end{prop}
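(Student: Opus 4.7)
The plan is to adapt Jantzen's argument from \cite[Lemma II.11.15]{rags} to the special-isogeny setting, where the role of the Frobenius morphism $F$ and its kernel $G_1\triangleleft G_r$ is played by $\sigma$ and the normal inclusion $G_{1/2}\triangleleft G_1$, with quotient $G_1/G_{1/2}\cong G_{1/2}^{(1/2)}$. The Steinberg-style tensor decomposition $L(a,b)\cong L(a,0)\otimes L(b,0)^{(1/2)}$ recorded in \eqref{E:TPT} will lift to the level of injective hulls.

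First, I would show that both sides of the claimed isomorphism are $G_{1/2}T$-projective. The left-hand side $\widehat{Q}_1(a,b)$ is $G_1T$-projective, hence remains projective upon restriction to the subgroup scheme $G_{1/2}T$. On the right-hand side, because $\sigma$ annihilates $G_{1/2}$, the factor $\widehat{Q}_{1/2}(b,0)^{(1/2)}$ is trivial as a $G_{1/2}$-module and therefore splits, as a $G_{1/2}T$-module, into a direct sum of one-dimensional $T$-weight spaces $k_\nu$.  Tensoring the projective indecomposable $\widehat{Q}_{1/2}(a,0)$ with each such $k_\nu$ yields the shifted projective indecomposable $\widehat{Q}_{1/2}((a,0)+\nu)$, so the right-hand side is a finite direct sum of $G_{1/2}T$-projective indecomposables.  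Since distinct $G_{1/2}T$-projective indecomposables have distinct highest weights and hence linearly independent formal characters, a $G_{1/2}T$-projective module is determined up to isomorphism by its formal $T$-character.

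It therefore suffices to establish the character identity
\begin{equation*}
\operatorname{ch}\widehat{Q}_1(a,b) \;=\; \operatorname{ch}\widehat{Q}_{1/2}(a,0)\cdot\operatorname{ch}\widehat{Q}_{1/2}(b,0)^{(1/2)}.
\end{equation*}
Following Jantzen's strategy, this identity can be obtained by computing $\textup{Hom}_{G_{1/2}T}(\widehat{Q}_1(a,b),N)$ for each simple $G_{1/2}T$-module $N$, using the Lyndon-Hochschild-Serre spectral sequence for the extension $1\to G_{1/2}T\to G_1T\to G_{1/2}^{(1/2)}\to 1$ together with the Steinberg decomposition of the $G_1T$-socle $L(a,b)\cong L(a,0)\otimes L(b,0)^{(1/2)}$.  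The main obstacle is precisely this multiplicity computation: it amounts to showing that the Steinberg-type factorization of the simple socle $L(a,b)$ propagates to the entire $G_1T$-injective hull when restricted to $G_{1/2}T$, which is the special-isogeny analogue of \cite[Lemma II.11.15]{rags}.  Once the character identity is in place, matching multiplicities yields the claimed $G_{1/2}T$-isomorphism.
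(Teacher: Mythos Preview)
Your plan is correct and matches the paper, which likewise just invokes the argument of \cite[Lemma~II.11.15]{rags}; the reduction to comparing two $G_{1/2}T$-projectives via their characters (equivalently, their indecomposable summand multiplicities) is exactly the right framework.

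One small correction is worth making: the Lyndon--Hochschild--Serre spectral sequence you name computes $\Ext_{G_1T}^{\bullet}$ from $G_{1/2}$- and $G_1T/G_{1/2}$-cohomology, not the $\Hom_{G_{1/2}T}$ groups you actually need, so as stated it is not the tool that closes the argument. What carries Jantzen's proof over to this setting is rather the observation that for each $c\in\{0,1,2\}$ the $G_1T/G_{1/2}$-module $\Hom_{G_{1/2}}\!\bigl(L(c,0),\widehat{Q}_1(a,b)\bigr)$ is injective (by tensor--Hom adjunction together with the $G_1T$-injectivity of $\widehat{Q}_1(a,b)$) and has simple socle $\delta_{c,a}\,L(b,0)^{(1/2)}$, hence is isomorphic to $\delta_{c,a}\,\widehat{Q}_{1/2}(b,0)^{(1/2)}$. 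Reading off $T$-weight spaces then yields the $G_{1/2}T$-summand multiplicities of $\widehat{Q}_1(a,b)$ directly, and they visibly agree with those of $\widehat{Q}_{1/2}(a,0)\otimes\widehat{Q}_{1/2}(b,0)^{(1/2)}$.
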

In particular $\widehat{Q}_1(2,2)\cong \St_1 \cong L(2,0) \otimes L(2,0)^{(1/2)} \cong \widehat{Q}_{1/2}(2,0) \otimes \widehat{Q}_{1/2}(2,0)^{(1/2)} .$ We will denote $\widehat{Q}_{1/2}(2,0)$ by $\St_{1/2}.$ It plays the role of the Steinberg module for $G_{1/2}$ and can be regarded as restriction of the $G$-module $L(2,0)$ to $G_{1/2}T.$ The next proposition demonstrates how to construct injective/projective 
modules in the truncated category $\operatorname{Mod}((4,0))$.

\begin{prop}
For $\la \in X_{1/2}$,  $\St_{1/2} \otimes L(\la)$ is injective and projective in $\operatorname{Mod}((4,0))$.
\end{prop}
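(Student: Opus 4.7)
The plan is to adapt the argument of Proposition~\ref{P:injproj}(a) to the setting of the intermediate kernel $G_{1/2}$.  Since $w_0 = -\operatorname{id}$ in type $\mathrm{G}_2$, every simple $G$-module is self-dual, so it suffices to establish the injectivity of $\St_{1/2}\otimes L(\la)$ in $\operatorname{Mod}((4,0))$; projectivity then follows by dualizing.  Concretely, I would show that $\Ext_G^1(L(\mu),\,\St_{1/2}\otimes L(\la)) = 0$ for every $\mu \in X_+$ with $\mu \leq (4,0)$ and every $\la \in X_{1/2}$.

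First, by adjunction and self-duality, rewrite
\[
\Ext_G^1(L(\mu),\,\St_{1/2}\otimes L(\la)) \;\cong\; \Ext_G^1(\St_{1/2},\,L(\mu)\otimes L(\la)).
\]
A $G_{1/2}$-analog of \cite[II.10.4]{rags} implies that only those composition factors $L(\nu)$ of $L(\mu)\otimes L(\la)$ whose restriction to $G_{1/2}$ has $\St_{1/2}$ in its socle can contribute to this $\Ext$-group.  Combined with the Steinberg-style tensor decomposition recorded in Equation~\eqref{E:TPT}, this forces $L(\nu) \cong \St_{1/2}\otimes L(\nu_1)^{(1/2)}$ for some $\nu_1\in X_+$.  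A Lyndon--Hochschild--Serre argument using the $G_{1/2}$-projectivity of $\St_{1/2}$ then collapses to
\[
\Ext_G^1\bigl(\St_{1/2},\,\St_{1/2}\otimes L(\nu_1)^{(1/2)}\bigr) \;\cong\; H^1(G,L(\nu_1)).
\]

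The main combinatorial step is a weight bound: since $L(\nu)$ appears in $L(\mu)\otimes L(\la)$, its highest weight satisfies $\nu \leq \mu+\la \leq (4,0) + (2,0) = (6,0)$, and the constraint that $\nu$ equals $(2,0)$ plus the highest weight of $L(\nu_1)^{(1/2)}$ cuts $\nu_1$ down to a short explicit list of small dominant weights for $\mathrm{G}_2$.

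The principal obstacle is the final cohomological verification: for each $\nu_1$ appearing in this list, one must confirm $H^1(G,L(\nu_1)) = 0$ for $G$ of type $\mathrm{G}_2$ at $p = 3$.  This can be settled either by quoting known low-weight $G$-cohomology computations (cf.\ \cite{Jan91}) or by handling each $\nu_1$ individually using Kempf vanishing together with the translation principle, but it is the only genuinely representation-theoretic input in the argument; everything else is formal manipulation of spectral sequences and Steinberg tensor products.
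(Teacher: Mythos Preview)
Your proposal is correct and follows essentially the same route as the paper: both adapt the proof of Proposition~\ref{P:injproj} to the $G_{1/2}$ setting, reducing the question to the vanishing of $H^1(G,L(\nu_1))$ for a finite list of small $\nu_1$, and both rest on the same key cohomological input, namely that for $G$ of type $\mathrm{G}_2$ at $p=3$ the smallest dominant $\eta$ with $\Ext_G^1(k,L(\eta))\neq 0$ is $\eta=(1,1)$. The only cosmetic difference is that the paper packages the endgame as the numerical criterion of part~(b) of Proposition~\ref{P:injproj} (checking a single inequality against the bound $\langle \la+(2,0),\alpha_0^\vee\rangle\le 8$), whereas you follow part~(a) and explicitly enumerate the candidate $\nu_1$ satisfying $(2,0)+\sigma^*(\nu_1)\le (6,0)$ before checking $H^1(G,L(\nu_1))=0$ for each; these are equivalent bookkeeping choices.
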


\begin{proof} 
Let $\eta$ be a dominant weight. By considering the weight multiplicity tables in \cite{L}, one can see that if $\Ext_G^1(k,L(\eta)) \neq 0$, then $\langle \eta,\a_0^{\vee}\rangle \geq 5$.  This arises from the fact that $\eta = (1,1)$ is the smallest weight where there is a non-trivial extension. Clearly, $p \cdot 5=15 > 8 \geq \langle \la + (2,0), \alpha_0^{\vee} \rangle.$
We can now apply the argument in the proof of Proposition \ref{P:injproj}, with $\St_{1/2}$ instead of  $\St_1$ and use part (b) of the same proposition to obtain the claim. 
\end{proof}

Next we adapt the argument in Proposition \ref{P:IfIPTMC}, again with $\St_{1/2}$ in place of $\St_1,$ to conclude:
\begin{prop}
For $\la \in X_{1/2}$,  $T((2,0)+\la) \mid_{G_{1/2}T}\cong \widehat{Q}_{1/2}((2,0)-\la),$ as $G_{1/2}T$-modules.
\end{prop}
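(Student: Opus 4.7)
The plan is to mimic the proof of Proposition \ref{P:IfIPTMC} essentially verbatim, with the half-Frobenius kernel $G_{1/2}T$ playing the role of $G_rT$, the module $\St_{1/2}$ in place of $\St_r$, and the truncated category $\operatorname{Mod}((4,0))$ in place of $\operatorname{Mod}((p^r-1)\rho + \sigma)$. Since $-1 \in W$ for $\rm{G}_2$, one has $w_0 \la = -\la$, which accounts for $(2,0) - \la$ appearing on the right-hand side rather than $(2,0) + w_0\la$. Moreover, because $\la$ already lies in $X_{1/2}$, no analog of the separate $\la_1 \neq 0$ argument of Proposition \ref{P:IfIPTMC} is needed.

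First, I would invoke the preceding proposition to conclude that $\St_{1/2} \otimes L(\la)$ is both injective and projective in $\operatorname{Mod}((4,0))$; the observation preceding Proposition \ref{P:IfIPTMC} then implies it is a tilting module. Since $(2,0)+\la$ is its unique maximal weight and $(2,0)+\la \leq (4,0)$ for all $\la \in X_{1/2}$, the indecomposable tilting module $T((2,0)+\la)$ occurs as a direct summand. Next, let $I$ denote the injective hull of $L((2,0)-\la)$ inside $\operatorname{Mod}((4,0))$. A Hom computation of the form
\[
\Hom_G(L((2,0)-\la), \St_{1/2}\otimes L(\la)) \cong \Hom_{G_{1/2}}(L((2,0)-\la), \St_{1/2}\otimes L(\la)) \cong k
\]
identifies $I$ as an indecomposable $G$-summand of $\St_{1/2}\otimes L(\la)$ occurring with multiplicity one. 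Restricted to $G_{1/2}T$, the module $I$ decomposes as a direct sum of various $\widehat{Q}_{1/2}(\gamma)$'s; since both the $G$-socle and the $G_{1/2}T$-socle of $I$ are simple, there can only be one such summand, forcing $I \mid_{G_{1/2}T} \cong \widehat{Q}_{1/2}((2,0)-\la)$.

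Finally, I would observe that $I$, as a summand of the tilting module $\St_{1/2}\otimes L(\la)$, is itself an indecomposable tilting module; since its highest weight is $(2,0)+\la$, we obtain $I \cong T((2,0)+\la)$. Combining with the previous step yields the desired $G_{1/2}T$-isomorphism $T((2,0)+\la)\mid_{G_{1/2}T} \cong \widehat{Q}_{1/2}((2,0)-\la)$. The main technical point, and the place requiring the most adaptation from Proposition \ref{P:IfIPTMC}, is the Hom computation, specifically the simplicity of the $G_{1/2}T$-socle of $I$ and the multiplicity-one statement for $L((2,0)-\la)$ in $\St_{1/2}\otimes L(\la)$. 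This mirrors the corresponding claim in the $G_r$-setting and ultimately rests on $\St_{1/2}$ being simultaneously simple and $G_{1/2}$-projective (so that $\St_{1/2}\otimes L(\la)$ is $G_{1/2}$-projective), combined with the standard tensor identity for Hom in this situation.
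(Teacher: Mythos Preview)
Your proposal is correct and takes precisely the same approach as the paper, whose proof consists of the single sentence ``adapt the argument in Proposition~\ref{P:IfIPTMC}, again with $\St_{1/2}$ in place of $\St_1$.'' You have faithfully carried out that adaptation, correctly noting that $w_0=-1$ in type $\rm{G}_2$ and that the $\la_1\neq 0$ case of Proposition~\ref{P:IfIPTMC} is unnecessary here since $\la\in X_{1/2}$.
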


Having seen, for $0 \leq a \leq 2$, that each $\widehat{Q}_{1/2}(a,0)$ lifts to $G$ (indeed to $T(4-a,0)$) we obtain the following stronger version of Proposition \ref{P:Q1tensor}, which is an analogue of \cite[Proposition II.11.16(b)]{rags}.

For $0 \leq a, b \leq 2$, and as $G_1T$-modules,
\begin{eqnarray*}
\widehat{Q}_1(a,b) &\cong& \widehat{Q}_{1/2}(a,0)\otimes \widehat{Q}_{1/2}(b,0)^{(1/2)}\\
&\cong& T(4-a,0) \otimes T(4-b, 0)^{(1/2)}.
\end{eqnarray*}
Note that $T(4-a,0) \otimes T(4-b, 0)^{(1/2)}$ is indecomposable as a $G_1T$-module and therefore also as a $G$-module.

The characters of the $L(\la)$ were computed by Springer \cite{Sp}. In particular, $L(\la) \cong \nabla(\la) \cong T(\la)$ for all $\la \in X_{1/2}.$
 Observe that $T(4-a,0) \otimes T(4-b, 0)^{(1/2)}$ is the unique indecomposable  $G$-summand containing the highest weight $(4-a,4-b)$ of 
\begin{align*}
(\St_{1/2} \otimes T(2-a,0)) &\otimes (\St_{1/2} \otimes T(2-b,0))^{(1/2)} \\
&\cong (\St_{1/2} \otimes L(2-a,0)) \otimes (\St_{1/2} \otimes L(2-b,0))^{(1/2)}\\
 &\cong \St_{1/2} \otimes \St_{1/2}^{(1/2)} \otimes L(2-a,0) \otimes L(2-b,0)^{(1/2)}\\
 &\cong \St_1 \otimes L(2-a, 2-b)
 \end{align*}
 which is a tilting module by \cite[8.5.3]{KN}. 
 
We conclude that 
\begin{eqnarray*}
\widehat{Q}_1(a,b) 
&\cong& T(4-a,0) \otimes T(4-b, 0)^{(1/2)} \cong T(4-a,4-b)\mid_{G_{1}T}
\end{eqnarray*}
as $G_1T$-modules. Hence, the TMC holds in this case.

\subsection{Type $\rm{G}_2$, $p = 5$} The verification that the TMC holds follows from Theorem~\ref{T:pairingbound} using the fact that any $\eta \in X^+$ with
$\textup{Ext}_G^1(k,L(\eta)) \ne 0$ 
satisfies $\langle \eta, \alpha_0^{\vee} \rangle \ge 15$ according to \cite{Hag}. This implies that $p\langle \eta, \alpha_0^{\vee} \rangle \geq 75> 40 = 2(p-1)(h-1)$.


\section{Type $\rm{G}_2$, $p=7$}

\subsection{} Since $\St_{1}\otimes L(\mu)$ is known to be tilting for restricted weights $\mu\in X_{1}$  and all primes (\cite{KN} and \cite{BNPS}), we may apply Theorem \ref{T:summary}.  From Proposition \ref{P:translation}, it suffices to show that $\nabla(\hat{\la})$ admits a good $p$-filtration for each $p$-regular weight $\la \in X_1$.  As with previous cases, we use the fact that $\nabla(\hat{\la}) = \ind_{B}^{G}\ \hat{\la} =  \ind_{G_1B}^G\ \widehat{Z}_1'(\hat{\la}) $ and attempt to apply Theorem~\ref{T:induction}.   However, simply knowing the $G_1B$-composition factors of $\widehat{Z}_1'(\hat{\la})$ is not sufficient to obtain the desired conclusion.  We must make a deeper analysis using the $G_1T$-radical filtration of $\widehat{Z}_1'(\hat{\la})$.

\subsection{Computing the $G_1T$-radical filtration}\label{S:explain}  Let $\mu$ be a $p$-regular dominant weight. To compute the $G_1T$-radical filtration of $\widehat{Z}_1'(\mu)$, we make use of a result of Andersen and Kaneda \cite[6.3 Theorem]{AK} (see Theorem \ref{T:AK} below) which relates the radical filtration to inverse Kazhdan-Lusztig polynomials introduced by Lusztig \cite{Lus}.   

To state the theorem, we need some notation involving alcove geometry.  Observe that in this setting each $p$-alcove contains a unique $p$-regular weight, so there is a one-to-one correspondence between alcoves and $p$-regular weights.  Given two alcoves $A, C$, let $Q_{A,C}$ denote the associated inverse Kazhdan-Lusztig polynomial and $d(A,C)$ denote the distance function (an integer determined by the hyperplane reflections needed to reflect from alcove $A$ to alcove $C$).  We will provide one example of these values in Section \ref{S:example} below and refer the reader to \cite{Lus} for details.  A weight $\nu \in X$ is called special if $\nu + \rho \in pX$ (e.g., the base example is $\nu = -\rho$).  Each special point $\nu$ determines a ``box'' in Euclidean space
$$
\Pi_{\nu} := \{ e \in E ~ |  ~ \langle\nu,\a^{\vee}\rangle < \langle e, \a^{\vee}\rangle < \langle\nu,\a^{\vee}\rangle + p \text{ for all } \a \in \Delta\}.
$$
The closures of the boxes (for all possible $\nu$) tessellate the Euclidean space and, in this case, each box contains 12 alcoves.  Let $W_{\nu}$ be the subgroup (isomorphic to the Weyl group) of the affine Weyl group of elements that stabilize $\nu$ under the dot action, and let $w_{\nu}$ denote the longest word in $W_{\nu}$.  For example, when $\nu = -\rho$, $w_{\nu} = w_0$.  For $\la \in \Pi_{\nu}$, $w_{\nu}\cdot \la = -\la + 2\nu$.

Given a finite-dimensional $G_1T$-module $M$, consider the radical filtration
$$
0 = \rad^n M  \subseteq \rad^{n-1} M \subseteq \cdots \subseteq \rad^1 M  \subseteq \rad^0 M = M.
$$
For $0 \leq j \leq n$, set $\rad_j M := \rad^j M /\rad^{j+1} M $, the $j$th radical layer.

We now restate Theorem 6.3 of \cite{AK} with our conventions:

\begin{theorem}[Andersen-Kaneda]\label{T:AK} Let $A$ be an alcove with $p$-regular weight $\mu$, $\nu$ be a special point, and $C$ be an alcove in $\Pi_{\nu}$ with $p$-regular weight $\la$.  Then
$$
Q_{A,C} = \sum_{j}q^{\frac12(d(A,C) - j)}[\rad_j\widehat{Z}'_1(\mu):\widehat{L}_1(w_{\nu}\cdot\la)].
$$
\end{theorem}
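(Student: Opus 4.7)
The plan is to follow the strategy of Andersen--Kaneda, which organizes the computation around three ingredients: (i) Lusztig's character formula to determine the composition factor multiplicities of the baby Verma module, (ii) a graded refinement via the Jantzen filtration, and (iii) a duality argument that converts ordinary Kazhdan--Lusztig polynomial data into the inverse polynomial $Q_{A,C}$. Since the baby Verma module $\widehat Z'_1(\mu)$ has the same formal character as $\St_1$ shifted by $\mu-(p-1)\rho$, knowing $[\widehat Z'_1(\mu):\widehat L_1(\tau)]$ is equivalent to knowing how many times the simple module $\widehat L_1(\tau)$ contributes in the Grothendieck group, and this is where Lusztig's conjecture enters.

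First, I would recall that in the regime of $p$-regular restricted weights (with $p$ large enough that Lusztig's conjecture is valid), the composition factor multiplicities $[\widehat Z'_1(\mu):\widehat L_1(\sigma)]$ are governed by evaluations of Kazhdan--Lusztig polynomials $P_{A',C'}(1)$ for suitable alcoves $A',C'$. The graded refinement I would use is the Jantzen filtration $\{\widehat Z'_1(\mu)^i\}_{i\ge 0}$ of the baby Verma module. By Andersen's conjecture on the Jantzen filtration (proved in the range where Lusztig's conjecture holds), the $i$th Jantzen layer is semisimple and $[\widehat Z'_1(\mu)^i/\widehat Z'_1(\mu)^{i+1}:\widehat L_1(\sigma)]$ equals the appropriate coefficient of $P_{A',C'}(q)$.

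Next, I would argue that the Jantzen filtration coincides with the radical filtration $\{\rad^j \widehat Z'_1(\mu)\}$, up to the degree shift by $d(A,C)$ that is built into the distance function. This rigidity step converts the Jantzen-layer multiplicities, indexed by the power of $q$, into the radical-layer multiplicities $[\rad_j\widehat Z'_1(\mu):\widehat L_1(\sigma)]$ appearing in the theorem. Finally, I would invoke the Kazhdan--Lusztig duality relating $P$ and $Q$: for $C\in\Pi_\nu$, applying the longest element $w_\nu$ of $W_\nu$ reflects $C$ through the special point $\nu$, and under this reflection the ordinary polynomial $P_{A',C'}$ transforms into the inverse polynomial $Q_{A,C}$. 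Combining this with the half-integer exponent $\tfrac12(d(A,C)-j)$ forced by the degree normalization yields the claimed identity.

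The main obstacle is Step 2, the identification of the Jantzen filtration with the radical filtration together with the graded matching with Kazhdan--Lusztig polynomials. This rigidity statement is deep: it hinges on the graded lift of the category of $G_1T$-modules through Soergel bimodules or equivariant perverse sheaves on the affine flag variety, and it is what transports the polynomial information (rather than just its evaluation at $q=1$) into the filtration structure. The remaining steps are comparatively formal once this is in hand, since the duality between $P$ and $Q$ is combinatorial and the role of $w_\nu$ is dictated by the geometry of the box $\Pi_\nu$.
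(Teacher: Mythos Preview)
The paper does not prove this theorem at all: it is quoted verbatim (up to notational conventions) from \cite[6.3 Theorem]{AK}, with the subsequent Remark indicating the hypotheses under which it is known to hold (the $G_1T$-version of Lusztig's Conjecture, equivalently Jantzen's condition $(\widehat D)$). So there is no ``paper's own proof'' to compare against; your proposal is a sketch of what Andersen--Kaneda do in their original article.

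As a summary of the Andersen--Kaneda argument, your outline is broadly on target: the ingredients are indeed Lusztig-type character data, the Jantzen filtration of $\widehat Z'_1(\mu)$, a rigidity statement identifying the Jantzen and radical filtrations, and the combinatorial duality between $P$- and $Q$-polynomials together with the reflection by $w_\nu$. One correction: your description of the rigidity step as resting on ``the graded lift of the category of $G_1T$-modules through Soergel bimodules or equivariant perverse sheaves on the affine flag variety'' is anachronistic and not how Andersen--Kaneda proceed. Their 1989 proof works internally in the $G_1T$-module category: they use the Jantzen sum formula, a careful inductive comparison of filtration layers, and the semisimplicity hypothesis (what becomes $(\widehat D)$ in \cite{rags}) to force the Jantzen filtration to coincide with the radical series. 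The $P\leftrightarrow Q$ conversion is then handled via Lusztig's combinatorial identities from \cite{Lus}. If you want a self-contained account in current language, \cite[II.D]{rags} gives one under the same hypotheses.
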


\begin{remark} The theorem holds for $p > h$ under the assumption that the $G_1T$-version of the Lusztig Conjecture holds.  See the discussion in Sections 5.1 and 5.2 of \cite{AK} as well as the discussion in \cite[II.D.13]{rags}.  In \cite{rags}, Jantzen works under an assumption $(\widehat{D})$ on the semi-simplicity of certain $G_1T$-modules (cf. \cite[II.C.14, II.D.4]{rags}). We refer the reader to \cite[II.C.17]{rags} for equivalent conditions. Note that for all rank 2 groups the modules $\nabla(\la)$ with  $p$-regular highest weight $\la \in X_1$ have multiplicity free composition series. This immediately implies $(\widehat{D})$. In short, the theorem is known to be valid for rank 2 groups and $p >h.$
\end{remark}

\subsection{An Example: $\widehat{Z}_1'(0,0)$}\label{S:example}
Consider the module $\widehat{Z}_1'(0,0)$.   We give a partial demonstration of how to apply Theorem \ref{T:AK} to obtain the $G_1T$-radical layers.  Let alcove $A$ be the alcove containing (0,0).  Take as the special point $\nu = -\rho$.  We may apply the theorem to each alcove $C$ in $\Pi_{-\rho}$.  Using the values of $Q_{A,C}$ from \cite{Lus} and computing the values of $d(A,C)$ by hand, one can identify the allowable values of $j$ and the corresponding composition multiplicity. The results are summarized in the following table, where the alcoves are numbered following \cite[18.2 Figure 1]{Hum}, $\la$ is the $p$-regular weight in $C$, and $w_{-\rho}\cdot \la = \s_0 - 7\rho = \s_0 + 7(-\rho)$ for $\s_0 \in X_1$ (noting that $\widehat{L}_1(w_{-\rho}\cdot\la) \cong \widehat{L}_1(\s_0)\otimes 7(-\rho) \cong L(\s_0)\otimes 7(-\rho)$):

\medskip
\begin{tabular}{|c|c|c|c|c|c|c|c|}
\hline
$C$ & $\la$ & $w_{-\rho}\cdot \la$ & $\s_0$ & $Q_{A,C}$ & $d(A,C)$ & $j$ & $[\rad_j\widehat{Z}_1'(0,0) : \widehat{L}_1(w_{-\rho}\cdot\la)]$ \\
\hline
1 & (0,0) & (-2,-2) & (5,5) & 1 & 0 & 0 & 1\\
2 & (2,0) & (-4,-2) & (3,5) & 1 & 1 & 1 & 1\\
3 & (1,1) & (-3,-3) & (4,4) & 1 & 2 & 2 & 1\\
4 & (1,2) & (-3,-4) & (4,3) & 1 & 3 & 3 & 1\\
5 & (2,2) & (-4,-4) & (3,3) &  $q$ & 4 & 2 & 1\\
6 & (0,4) & (-2,-6) & (5,1) &  $q$ & 5 & 3 & 1\\
7 & (5,1) & (-7,-3) & (0,4) &  $q^2$ & 5 & 1 & 1\\
8 & (3,3) & (-5,-5) & (2,2) & $q + q^2$ & 6 & 2 & 1\\
	& & & & & & 4 & 1\\
11 & (4,3) & (-6,-5) & (1,2) & $2q^2 + q^3$ & 7 & 1 & 1\\
	& & & & & & 3 & 2\\
13 & (4,4) & (-6,-6) & (1,1) &  $3q^3$ & 8 & 2 & 3\\
15 & (3,5) & (-5,-7) & (2,0) & $3q^4$ & 9 & 1 & 3\\
16 & (5,5) & (-7,-7) & (0,0) & $q^3 + 3q^4$ & 10 & 2 & 3\\
	& & & & & & 4 & 1\\
\hline
\end{tabular}

\medskip
In a similar manner, we consider all possible special points $\nu$ for which there is an alcove $C$ lying in the box $\Pi_{\nu}$ with $Q_{A,C} \neq 0$ and make computations as above.   In total there are 16 relevant special points (including $-\rho$).  We may write $\nu = -\rho + 7\tilde{\nu}$ for some weight $\tilde{\nu}$.  Then, if $\la'$ is a $p$-regular weight in $\Pi_{\nu}$, $\la' = \la + 7\tilde{\nu}$ for $\la \in \Pi_{-\rho}$ as above.  Furthermore, 
$$
w_{\nu}\cdot \la' = -\la' + 2\nu = -\la - 2\rho + 7\tilde{\nu} = w_{-\rho}\cdot\la + 7\tilde{\nu} = \s_0 + 7(-\rho + \tilde{\nu}),
$$
with $\s_0 \in X_1$ as above.  So $\widehat{L}_1(w_{\nu}\cdot\la') \cong \widehat{L}_1(\s_0)\otimes 7(-\rho + \tilde{\nu}) \cong L(\s_0)\otimes 7\s_1$, by setting $\s_1 := -\rho + \tilde{\nu}$.  In particular, the above table contains all $\s_0$ that appear, that is, gives all the isotypic components $L(\s_0)$ that will appear within $\widehat{Z}_1'(0,0)$.   Moreover, the $\s_1$-portion is easily obtained from $\tilde{\nu}$.   Putting all of these cases together, we get the results in the table labelled ``Alcove 1'' in Appendix \ref{S:appendix}. 

From the general fact that $\widehat{Z}_1'(\mu_0 + p\mu_1) \cong \widehat{Z}_1'(\mu_0)\otimes p\mu_1$ as a $G_1B$-module, the filtration for $\widehat{Z}_1'(0,0)$ also gives us the filtration for $\widehat{Z}_1'(7,7) = \widehat{Z}_1'(0,0)\otimes 7(1,1)$, which is one of the modules of interest for us.  The aforementioned table lists the values of $\s_1$ for this module as well, which are obtained by simply adding $(1,1)$ to the previous ones.  

Tables for the other 11 alcoves are also given in  Appendix \ref{S:appendix}.

\subsection{Composition Factor Analysis} Let $\mu = \hat{\la}$ with $\la \in X_1$ being $p$-regular and consider Theorem~\ref{T:induction}.  For $\nabla(\mu)$ to fail to have a good $p$-filtration, we must have composition factors  $L(\s_0)\otimes p\s_1$ and $L(\s_0)\otimes p\tilde{\s}_1$ (same isotypic component)
with a non-zero and non-isomorphic $G$-map $\nabla(\s_1) \to  R^1\ind_B^G\tilde{\s}_1$.    
Next, observe that if $\nabla(\s_1) \cong L(\s_1)$ for all such $\s_1$, then any non-zero map would necessarily be an isomorphism.  Reviewing the table of composition factors in Appendix \ref{S:appendix}, we see that there are only two cases where 
$\nabla(\s_1) \neq L(\s_1)$: $\s_1 = (2,0), (1,1)$.   Precisely, we have two short exact sequences of $G$-modules:
$$
0 \to L(2,0) \to \nabla(2,0) \to k \to 0
$$
and
$$
0 \to L(1,1) \to \nabla(1,1) \to L(2,0) \to 0.
$$

\subsection{Composition Factors: Non-vanishing $R^1$}  Considering again the collection of all $\s_1$ that appear in the tables of Appendix \ref{S:appendix}, we are interested in those $\s_1$ where $R^1\ind_{B}^G\s_1 \neq 0$.  For this to occur, we must have (cf. \cite[Prop. II.5.4]{rags}) $\langle\s_1,\a^{\vee}\rangle \leq -2$ for a simple root $\a$.  The set of such weights that appear are as follows: 
$$(-2,1), (-2,2), (-2,3), (3,-2), (-3,1), (-3,2), (-3,3), (-4,2), (-4,3), (4,-2), (5,-2).$$ 
Using \cite[II.5.4(d)]{rags}, one can determine the structure of $R^1\ind_{B}^G\s_1$ for each such $\s_1$.  Note that the simple roots are $\a_1 = (2,-1)$ and $\a_2 = (-3,2)$.  

\begin{itemize}
\item $R^1\ind_B^G(-2,1) = R^1\ind_B^G (s_{\a_1}\cdot (0,0))  \cong \nabla(0,0) = k$
\item $R^1\ind_B^G(-2,2)  = R^1\ind_B^G(s_{\a_1}\cdot (0,1))\cong \nabla(0,1) = L(0,1)$
\item $R^1\ind_B^G(-2,3) = R^1\ind_B^G(s_{\a_1}\cdot (0,2)) \cong\nabla(0,2) = L(0,2)$
\item $R^1\ind_B^G(3,-2) = R^1\ind_B^G(s_{\a_2}\cdot (0,0)) \cong \nabla(0,0) = k$
\item $R^1\ind_B^G(-3,1) = R^1\ind_B^G(s_{\a_1}\cdot (1,-1)) \cong \nabla(1,-1) = 0$
\item $R^1\ind_B^G(-3,2) = R^1\ind_B^G(s_{\a_1}\cdot (1,0))\cong \nabla(1,0) = L(1,0)$
\item $R^1\ind_B^G(-3,3) = R^1\ind_B^G(s_{\a_1}\cdot (1,1)) \cong \nabla(1,1)$, with socle $L(1,1)$
\item $R^1\ind_B^G(-4,2) = R^1\ind_B^G(s_{\a_1}\cdot (2,-1)) \cong \nabla(2,-1) = 0$
\item $R^1\ind_B^G(-4,3) = R^1\ind_B^G(s_{\a_1}\cdot (2,0)) \cong \nabla(2,0)$, with socle $L(2,0)$
\item $R^1\ind_B^G(4,-2) = R^1\ind_B^G(s_{\a_2}\cdot (1,0))\cong \nabla(1,0) = L(1,0)$
\item $R^1\ind_B^G(5,-2) = R^1\ind_B^G(s_{\a_2}\cdot (2,0)) \cong \nabla(2,0)$, with socle $L(2,0)$
\end{itemize}

From the discussion above, one can also see from \cite[Prop. II.5.4]{rags} that $R^2\ind_B^G \s_1 = 0$ for all $\s_1$ that appear; a condition needed to apply part (c)(iii) of Theorem \ref{T:induction}.  

\subsection{Composition Factors: Comparison} Using the previous two sections, we see that there are only four potentially bad scenarios, where the following pairs ($\s_1$, $\tilde{\s}_1$) arise (with a common $\s_0$):

{\bf Case 1:} $\s_1 = (2,0)$ and $\tilde{\s}_1 = (3,-2)$, with $\s_0 = (0,0)$

{\bf Case 2:} $\s_1 = (2,0)$ and $\tilde{\s}_1 = (-2,1)$, with various $\s_0$: $(0,0)$, $(2,0)$, $(1,1)$, $(1,2)$, $(2,2)$

{\bf Case 3:} $\s_1 = (1,1)$ and $\tilde{\s}_1 = (-4,3)$, with $\s_0 = (0,0)$

{\bf Case 4:} $\s_1 = (1,1)$ and $\tilde{\s}_1 = (5,-2)$, with $\s_0 = (0,0)$

\medskip\noindent
For $\la \in X_1$, the numbering of the alcoves in Appendix \ref{S:appendix} corresponds to that of $\hat{\la}$ as in the following table.

\medskip
\begin{tabular}{|c|c|c|c|c|c|c|}
\hline
Alcove & 1 & 2 & 3 & 4 & 5 & 6\\
\hline
$\la$ & (5,5) & (3,5) & (4,4) & (4,3) & (3,3) & (4,1) \\
\hline
$\hat{\la}$ &  (7,7) & (9,7) & (8,8) & (8,9) & (9,9) & (7,11)  \\
\hline \hline 
Alcove &  7 & 8 & 11 & 13 & 15 & 16\\
\hline
$\la$ & (0,4) & (2,2) & (1,2) & (1,1) & (2,0) & (0,0)\\
\hline
$\hat{\la}$ &  (12,8) & (10,10) & (11,10) & (11,11) & (10,12) & (12,12) \\
\hline
\end{tabular}

\medskip\noindent
Considering all the composition factors in the tables of Appendix \ref{S:appendix}, we find that Case 1 arises in Alcoves 1 and 2.  Case 2 arises in Alcoves 1 through 8, but not 11 through 16.    Case 3 occurs only in Alcove 4. Lastly, Case 4 occurs only in Alcove 11.  Therefore, $\nabla(\hat{\lambda})$ has a good $p$-filtration for Alcoves 13, 15, and 16. 
In the Case 1 scenarios, when the weight $(3,-2)$ appears in the same isotypic component as the weight $(2,0)$, it always appears in a strictly higher level $G_1T$-radical layer.   Hence, that would also be the case for the $G_1B$-radical layers.   As such no cancellation can occur (cf. condition (b) of \ref{T:induction}), and this case is resolved.
In the Case 2 scenarios, the weight (-2,1) appears in the same or higher level $G_1T$-radical layer.   In particular, for Alcoves 5 and 6, the weights only occur in different $G_1T$-radical layers (with the weight (-2,1) above the weight (2,0)).  As above, since this relationship would also hold for the $G_1B$-radical filtration, then we would see that no cancellation will occur.  In the remaining alcoves, we have only to consider those cases where the weights (-2,1) and (2,0) lie in the same $G_1T$-radical layer.    We note that in occurrences of Case 3 and Case 4, the weights of concern also arise in the same $G_1T$-radical layer.  

\subsection{Case 2} Consider the case when $\widehat{Z}_1'(\mu)$ contains composition factors of the form $L(\s_0)\otimes 7(2,0)$ and $L(\s_0)\otimes 7(-2,1)$ for some $\s_0$.  As noted above, this is of concern only if they lie in the same $G_1T$-radical layer.   For cancellation to occur, there must be a non-trivial extension $\Ext_B^i((2,0),(-2,1))$ and an associated $B$-module $N$ (i.e., with (2,0) in the head and (-2,1) in the socle) such that $L(\s_0)\otimes N^{(1)}$ appears as a $G_1B$-subquotient of $\widehat{Z}_1'(\mu)$. Since (length one) extensions of $B$-modules differ by $p^n\a$ for a simple root $\a$, within $N$, the weight (module) lying immediately above $(-2,1) = -\a_1$ must be $-\a_1 + p^n\a$ for $\a \in \{\a_1,\a_2\}$.  A check of all composition factors shows that $n = 0$ and that only $-\a_1 + \a_1 = 0$ occurs.  In particular, $-\a_1 + \a_2 = (-5,3)$ does not occur.  That is, we must be in a situation where we have a non-split (or cancellation could not occur) short exact sequence of $B$-modules:
$$
0 \to -\a_1 \to S \to k \to 0
$$
and 
$$
0 \to S \to N \to N/S \to 0.
$$
Consider the long exact sequence in induction associated to the first sequence:
$$
0 \to \ind_B^G(-\a_1) \to \ind_B^G S \to \ind_B^G k \to R^1\ind_{B}^G(-\a_1) \to R^1\ind_B^G S  \to R^1\ind_B^G k  \to \cdots.
$$
By basic facts as above, this becomes
$$
0 \to 0 \to \ind_B^G S  \to k \overset{\phi}{\to} k \to R^1\ind_B^G S  \to 0 \to \cdots.
$$
Moreover, the map $\phi$ is necessarily non-zero (and hence an isomorphism) since the original extension was non-split. We are using here the identification 
$\Hom_G(k,R^1\ind_B^G(-\a_1)) \cong \Ext^1_{B}(k,-\a_1)$.   Exactness now implies that $\ind_B^G S  = 0$ and $R^1\ind_B^G S = 0$.

Consider now the long exact sequence associated to the second short exact sequence above:
$$
0 \to \ind_B^G S  \to \ind_B^G N  \to \ind_B^G N/S  \to R^1\ind_B^G S  \cdots.
$$
Using the preceding conclusions, we see that $\ind_B^G N  \cong \ind_B^G N/S $.  Repeating this argument as needed, any weights of the form $(-2,1)$ can be removed from consideration and will not lead to any cancellation.  Thus, $\nabla(\hat{\la})$ admits a good $p$-filtration for all alcoves except possibly 4 and 11.

\subsection{Case 3} Consider the case when $\widehat{Z}_1'(\mu)$ contains composition factors of the form $L(\s_0)\otimes 7(1,1)$ and $L(\s_0)\otimes 7(-4,3)$ for some $\s_0$.  As noted above, this occurs only in Alcove 4 within a common $G_1T$-radical layer (and with $\s_0 = 0$).   For cancellation to occur, there must be a non-trivial extension $\Ext_B^i((1,1),(-4,3))$ and an associated $B$-module $N$ (i.e., with (1,1) in the head and (-4,3) in the socle) such that $L(\s_0)\otimes N^{(1)}$ appears as a $G_1B$-subquotient of $\widehat{Z}_1'(\mu)$. As above, within $N$, the weight (module) lying immediately above $(-4,3)$ is either $(-4,3) + \a_1 = (-2,2)$ or $(-4,3) + \a_2 = (-7,5)$. A check of all composition factors shows that the latter case does not occur. Continuing in this manner, above the weight (-2,2) must lie either (0,1) or (-5,4), but again the latter does not occur, and above (0,1) must lie either (2,0) or (-3,3), but once again, the latter does not occur.  So we may assume that we have a series of non-split short exact sequences of $B$-modules:
$$
0 \to S_1 \to N \to N/S_1 \to 0,
$$
$$
0 \to S_2 \to S_1 \to  (2,0) \to 0,
$$
$$
0 \to (-4,3) \to S_2 \to S_3 \to 0,
$$
and
$$
0 \to (-2,2) \to S_3 \to (0,1) \to 0.
$$
Arguing as before with the associated long exact sequences, starting from the bottom, we find that $\ind_B^G S_3 = 0 = R^1\ind_B^G S_3 $.  Then we get $\ind_B^G S_2 = 0$ and $R^1\ind_B^G S_2  \cong R^1\ind_{B}^{G} (-4,3) \cong \nabla(2,0)$.  This gives $\ind_B^G S_1  = 0 = R^1\ind_B^G S_1 $, from which we get as before that $\ind_B^G N \cong \ind_B^G N/S_1$. Hence, the weight $(-4,3)$ is removed and cannot cancel out the head of $\nabla(1,1)$.

\subsection{Case 4}  Consider the case when $\widehat{Z}_1'(\mu)$ contains composition factors of the form $L(\s_0)\otimes 7(1,1)$ and $L(\s_0)\otimes 7(5,-2)$ for some $\s_0$. As noted above, this occurs only in Alcove 11 within a common $G_1T$-radical layer (and again with $\s_0 = 0$). The situation may be resolved in a manner similar to Case 2.  As in that case, suppose we have a non-trivial extension $\Ext^i_B((1,1),(5,-2))$ with associated module $N$.  Within $N$, lying directly above $(5,-2)$ must be $(5,-2) + \a_1 = (7,-3)$ or $(5,-2) + \a_2 = (2,0)$.  The former case does not occur. So $N$ contains a submodule $S$ with structure given by a non-split extension
$$
0 \to (5,-2) \to S \to (2,0) \to 0.
$$
Consider again the associated long exact sequence:
\begin{align*}
0 \to \ind_B^G(5,-2) &\to \ind_B^G S \to \ind_B^G(2,0) \to R^1\ind_{B}^G(5,-2) \to R^1\ind_B^G S  \to \\
	&\to R^1\ind_B^G(2,0) \to \cdots,
\end{align*}
which reduces to
$$
0 \to 0 \to \ind_B^G S  \to \nabla(2,0) \overset{\phi}{\to} \nabla(2,0) \to R^1\ind_B^G S  \to 0 \to \cdots,
$$
where once again the map $\phi$ must be an isomorphism (the only option for a non-trivial map based on the structure of $\nabla(2,0)$).  And so the argument proceeds as above.

\subsection{Additional Remarks} In the Appendix, as an added bonus, we provide complete information about the $G_{1}T$-radical series for the baby Verma modules for $\Phi=\rm{G}_{2}$ when $p=7$. 
We should also add that our proof in this section shows that if one considers a layer of the $G_{1}B$-radical series for the baby Verma modules and applies $\text{ind}_{G_{1}B}^{G}(-)$ then the resulting $G$-module has a good $p$-filtration. 

One can also use the data in those tables to compute the Ext-groups $\Ext_{G_1}^1(L(\la),L(\mu))$ for $p$-regular dominant weights $\la, \mu$ in these alcoves, verifying the results in \cite[Section 4.2, Figure 3]{Lin} for all primes greater than or equal to $7$.

\newpage
\appendix
\section{$G_1T$-radical series of baby Verma modules}\label{S:appendix}

The following tables give the $G_1T$-composition factors (necessarily also $G_1B$-composition factors) of the radical layers of $\widehat{Z}_1'(\mu)$ and $\widehat{Z}_1'(\mu + 7\rho) \cong \widehat{Z}_1'(\mu)\otimes 7\rho$ for each 7-regular $\mu \in X_1$.  Here, a composition factor is $L(\s_0)\otimes 7\s_1$, with $j$ denoting the layer $\rad_j\widehat{Z}_1'(\mu) := \rad^j\widehat{Z}_1'(\mu)/\rad^{j+1}\widehat{Z}_1'(\mu)$. We refer the reader to Section~\ref{S:example} for an outline for the calculations for Alcove 1. The remaining alcoves (cf. 
\cite[18.2 Figure 1]{Hum})  in the restricted region are treated analogously. 
\\\\\\
\begin{center} {\bf Alcove 1}

\medskip\noindent
\begin{tabular}{|c|c|c|c|}
\hline
$\s_0$ & $j$ & $\s_1$ for $\widehat{Z}_1'(0,0)$ & $\s_1$ for $\widehat{Z}_1'(7,7)$\\
\hline
(5,5) & 0 & (-1,-1) & (0,0)\\
\hline
(3,5) & 1 & (-1,-1) & (0,0)\\
\hline
	& 5 & (0,-1) & (1,0)\\
\hline
(4,4) & 2 & (-1,-1) & (0,0)\\
\hline
	& 4 & (0,-1), (-2,0) & (1,0), (-1,1)\\
\hline
(4,3) & 3 & (-1,-1),  (0,-1), (1,-2), (-2,0) & (0,0), (1,0), (2,-1), (-1,1)\\
\hline
(3,3) & 2 & (-1,-1),(0,-1), (1,-2),  (-2,0), (-3,0) & (0,0), (1,0), (2,-1), (-1,1), (-2,1)\\
\hline
(5,1) & 3 & (-1,-1),(0,-1),  (1,-2), (-2,0), (-3,0) & (0,0), (1,0), (2,-1), (-1,1), (-2,1)\\
\hline
	& 5 & (-1,0) & (0,1)\\
\hline
(0,4) & 1 & (-1,-1),  (0,-1),  (0,-2), (1,-2), (-2,0), & (0,0),  (1,0), (1,-1), (2,-1), (-1,1),\\
	& & (-3,0) & (-2,1)\\
\hline
(2,2) & 2 & (-1,-1),  (0,-1),  (0,-2), (1,-2), (-2,0), & (0,0),  (1,0), (1,-1), (2,-1), (-1,1),\\
	& & (-3,0) & (-2,1)\\
\hline
	& 4 & (-1,-1),  (0,-1), (-1,0), (1,-2), (-2,0), & (0,0),  (1,0), (0,1), (2,-1), (-1,1),\\
	& & (2,-2) & (3,-1)\\
\hline
(1,2) & 1  & (-1,-1),  (0,-1), (0,-2), (1,-2), (-2,0),   & (0,0), (1,0), (1,-1),  (2,-1),  (-1,1), \\
	&	 &  (-2,-1), (-3,0) &(-1,0),  (-2,1)\\
\hline
	& 3 & (-1,-1)$^{\oplus 2}$, (0,-1)$^{\oplus 2}$, (-1,0),  (1,-2), &  (0,0)$^{\oplus 2}$, (1,0)$^{\oplus 2}$, (0,1), (2,-1),\\
	& 	& (-2,0)$^{\oplus 2}$,  (2,-2), (-3,0), (-4,1) & (-1,1)$^{\oplus 2}$, (3,-1), (-2,1), (-3,2)\\
\hline
(1,1) & 2 & (-1,-1)$^{\oplus 3}$, (0,-1)$^{\oplus 2}$, (-1,0), (0,-2),  & (0,0)$^{\oplus 3}$, (1,0)$^{\oplus 2}$, (0,1), (1,-1),\\
	& 	&  (1,-2)$^{\oplus 2}$,  (-2,0)$^{\oplus 2}$, (-2,-1), (2,-2),  &  (2,-1)$^{\oplus 2}$, (-1,1)$^{\oplus 2}$, (-1,0), (3,-1),  \\
	&	&  (-3,0), (2,-3), (-4,1) & (-2,1), (3,-2), (-3,2)\\
\hline
	& 4 & (0,-1) & (1,0)\\
\hline
(2,0) & 1 & (-1,-1)$^{\oplus 3}$,  (0,-1), (-1,0), (0,-2), & (0,0)$^{\oplus 3}$,  (1,0), (0,1), (1,-1),\\
	& 	&  (1,-2), (-2,0), (-2,-1), (2,-2),  & (2,-1), (-1,1), (-1,0), (3,-1)\\
	&	&  (-3,0), (2,-3), (-4,0),  (-4,1) & (-2,1), (3,-2), (-3,1), (-3,2)\\
\hline
	& 3 & (0,-1), (-2,0) & (1,0), (-1,1)\\
\hline
(0,0) & 2 & (-1,-1)$^{\oplus 3}$,  (0,-1), (-1,0), (0,-2), & (0,0)$^{\oplus 3}$, (1,0), (0,1), (1,-1), \\
	& 	&   (1,-2), (-2,0),  (-2,-1), (2,-2), & (2,-1),  (-1,1), (-1,0), (3,-1),  \\
	&	&   (-3,0), (2,-3), (-4,0),  (-4,1) & (-2,1), (3,-2), (-3,1), (-3,2)\\
\hline
	& 4 & (-1,-1), (0,-1)$^{\oplus 3}$, (-1,0), (1,-1),   & (0,0), (1,0)$^{\oplus 3}$, (0,1), (2,0),  \\
	& 	&  (0,-2), (1,-2), (-2,0)$^{\oplus 2}$, (2,-2),   &  (1,-1), (2,-1), (-1,1)$^{\oplus 2}$, (3,-1), \\
	&      &  (-3,0), (-3,1), (3,-3) & (-2,1),  (-2,2), (4,-2)\\
\hline
	& 6 & (0,0) & (1,1)\\
\hline
\end{tabular}
\end{center}

\newpage
\begin{center} {\bf Alcove 2}

\medskip\noindent
\begin{tabular}{|c|c|c|c|}
\hline
$\s_0$ & $j$ & $\s_1$ for $\widehat{Z}_1'(2,0)$ & $\s_1$ for $\widehat{Z}_1'(9,7)$\\
\hline
(5,5) & 5 & (0,-1) & (1,0)\\
\hline
(3,5) & 0 & (-1,-1) & (0,0)\\
\hline
	& 4 & (0,-1) & (1,0)\\
\hline
(4,4) & 1 & (-1,-1) & (0,0)\\
\hline
	& 3 & (0,-1), (-2,0) & (1,0), (-1,1)\\
\hline
(4,3) & 2 & (-1,-1),  (0,-1), (1,-2), (-2,0) & (0,0), (1,0), (2,-1), (-1,1)\\
\hline
(3,3) & 3 & (-1,-1), (0,-1), (1,-2),  (-2,0) & (0,0), (1,0), (2,-1), (-1,1)\\
\hline
	& 5 & (-1,0) & (0,1)\\
\hline
(5,1) & 2 & (-1,-1), (0,-1),  (1,-2), (-2,0), (-3,0) & (0,0), (1,0), (2,-1), (-1,1), (-2,1)\\
\hline
	& 4 & (-1,0) & (0,1)\\
\hline
(0,4) & 4 & (-1,-1),  (0,-1),  (-1,0), (1,-2), (-2,0), & (0,0),  (1,0), (0,1), (2,-1), (-1,1),\\
	&  & (2,-2) & (3,-1)\\
\hline
(2,2) & 1 & (-1,-1),  (0,-1),  (0,-2), (1,-2), (-2,0), & (0,0),  (1,0), (1,-1), (2,-1), (-1,1),\\
	& & (-3,0) & (-2,1)\\
\hline
	& 3 & (-1,-1),  (0,-1), (-1,0), (1,-2), (-2,0), & (0,0),  (1,0), (0,1), (2,-1), (-1,1),\\
	& & (2,-2) & (3,-1)\\
\hline
(1,2) & 2  & (-1,-1)$^{\oplus 2}$,  (0,-1), (0,-2), (1,-2),   & (0,0)$^{\oplus 2}$, (1,0), (1,-1),  (2,-1), \\
	&	 &  (-2,0),  (-3,0) &   (-1,1),  (-2,1)\\
\hline
	& 4 & (-1,-1), (0,-1)$^{\oplus 2}$, (-1,0)$^{\oplus 2}$, (1,-1), &  (0,0), (1,0)$^{\oplus 2}$, (0,1)$^{\oplus 2}$, (2,0), \\
	& 	&  (1,-2), (-2,0)$^{\oplus 2}$,  (2,-2), (-3,1)& (2,-1), (-1,1)$^{\oplus 2}$, (3,-1), (-2,2)\\
\hline
(1,1) & 1 & (-1,-1) & (0,0)\\
\hline
	& 3 & (-1,-1)$^{\oplus 2}$, (0,-1)$^{\oplus 3}$, (-1,0), (1,-1),  & (0,0)$^{\oplus 2}$, (1,0)$^{\oplus 3}$, (0,1), (2,0)\\
	& 	&   (0,-2), (1,-2)$^{\oplus 2}$,  (-2,0)$^{\oplus 2}$,  (2,-2),  &  (1,-1),  (2,-1)$^{\oplus 2}$, (-1,1)$^{\oplus 2}$,  (3,-1),  \\
	&	&  (-3,0),  (-3,1), (3,-3)  & (-2,1), (-2,2), (4,-2)\\	
\hline
(2,0) & 2 & (-1,-1) & (0,0)\\
\hline
	& 4  & (-1,-1),  (0,-1)$\ppp$, (-1,0), (1,-1),  & (0,0),  (1,0)$\ppp$, (0,1), (2,0),\\
	& 	&  (0,-2), (1,-2), (-2,0)$\pp$, (2,-2),  &  (1,-1), (2,-1), (-1,1)$\pp$, (3,-1)\\
	&	&  (-3,0), (-3,1), (3,-3) & (-2,1), (-2,2), (4,-2)\\
\hline
	& 6 & (0,0) & (1,1)\\
\hline
(0,0) & 1 & (-1,-1)$^{\oplus 3}$,  (0,-1), (-1,0), (0,-2), & (0,0)$^{\oplus 3}$, (1,0), (0,1), (1,-1), \\
	& 	&   (1,-2), (-2,0), (-2,-1), (2,-2), & (2,-1),  (-1,1), (-1,0),  (3,-1), \\
	&	&   (-3,0), (2,-3), (-4,0),  (-4,1) & (-2,1), (3,-2), (-3,1), (-3,2)\\
\hline
	& 3 & (-1,-1), (0,-1)$^{\oplus 3}$, (-1,0), (1,-1),   & (0,0), (1,0)$^{\oplus 3}$, (0,1), (2,0),  \\
	& 	&  (0,-2), (1,-2), (-2,0)$^{\oplus 2}$, (2,-2),   &  (1,-1), (2,-1), (-1,1)$^{\oplus 2}$, (3,-1), \\
	&      &  (-3,0), (-3,1), (3,-3) & (-2,1), (-2,2),  (4,-2)\\
\hline
	& 5 & (0,0) & (1,1)\\
\hline
\end{tabular}
\end{center}

\newpage
\begin{center} {\bf Alcove 3}

\medskip\noindent
\begin{tabular}{|c|c|c|c|}
\hline
$\s_0$ & $j$ & $\s_1$ for $\widehat{Z}_1'(1,1)$ & $\s_1$ for $\widehat{Z}_1'(8,8)$\\
\hline
(5,5) & 4 & (0,-1) & (1,0)\\
\hline
(3,5) & 3 &  (0,-1), (-2,0) & (1,0), (-1,1)\\
\hline
(4,4) & 0 & (-1,-1) & (0,0)\\
\hline
	& 2 & (0,-1), (-2,0) & (1,0), (-1,1)\\
\hline
(4,3) & 1 & (-1,-1) & (0,0)\\
\hline
	& 3 & (0,-1), (-2,0) & (1,0), (-1,1)\\
\hline
	& 5 & (-1,0) & (0,1)\\
\hline
(3,3) & 2 & (-1,-1), (0,-1), (1,-2),  (-2,0) & (0,0), (1,0), (2,-1), (-1,1)\\
\hline
	& 4 & (-1,0) &  (0,1)\\
\hline
(5,1) & 1 & (-1,-1), (0,-1),  (1,-2), (-2,0), (-3,0) & (0,0), (1,0), (2,-1), (-1,1), (-2,1)\\
\hline
	& 3 & (-1,0) & (0,1)\\
\hline
(0,4) & 3 & (-1,-1),  (0,-1),  (1,-2), (-2,0), & (0,0),  (1,0), (2,-1), (-1,1),\\
\hline
	& 5 & (1,-1), (-1,0) & (2,0), (0,1)\\
\hline
(2,2) & 2 & (-1,-1)$\pp$,  (0,-1),  (1,-2), (-2,0), & (0,0)$\pp$,  (1,0),  (2,-1), (-1,1),\\
	& & (-3,0) & (-2,1)\\
\hline
	& 4 &  (0,-1), (-1,0)$\pp$, (1,-1), (-2,0), &  (1,0), (0,1)$\pp$, (2,0), (-1,1),\\
	& & (-3,1) & (-2,2)\\
\hline
(1,2) & 1  & (-1,-1)$^{\oplus 2}$,  (0,-1), (0,-2), (1,-2),   & (0,0)$^{\oplus 2}$, (1,0), (1,-1),  (2,-1), \\
	&	 &  (-2,0),  (-3,0) &   (-1,1),  (-2,1)\\
\hline
	& 3 & (-1,-1), (0,-1)$^{\oplus 2}$, (-1,0)$^{\oplus 2}$, (1,-1), (1,-2), &  (0,0), (1,0)$^{\oplus 2}$, (0,1)$^{\oplus 2}$, (2,0), (2,-1),\\
	& 	& (-2,0)$^{\oplus 2}$,  (2,-2), (-3,1)& (-1,1)$^{\oplus 2}$, (3,-1), (-2,2)\\
\hline
(1,1) & 2 & (-1,-1)$\pp$, (0,-1), (0,-2), (1,-2) & (0,0)$\pp$, (1,0), (1,-1), (2,-1)\\
	&  & (-2,0), (-3,0) & (-1,1), (-2,1)\\
\hline
	& 4 & (0,-1)$\pp$, (1,-1), (-1,0)$\pp$, (-2,0)$\pp$, & (1,0)$\pp$, (2,0), (0,1)$\pp$, (-1,1)$\pp$,\\
	&  & (2,-2), (-3,1) & (3,-1), (-2,2)\\
\hline
	& 6 & (0,0) & (1,1)\\
\hline
(2,0) & 1 & (-1,-1) & (0,0)\\
\hline
	& 3  & (-1,-1),  (0,-1)$\ppp$, (-1,0), (1,-1),  & (0,0),  (1,0)$\ppp$, (0,1), (2,0),\\
	& 	&  (0,-2), (1,-2), (-2,0)$\pp$, (2,-2),  &  (1,-1), (2,-1), (-1,1)$\pp$, (3,-1)\\
	&	&  (-3,0), (-3,1), (3,-3) & (-2,1), (-2,2), (4,-2)\\
\hline
	& 5 & (0,0) & (1,1)\\
\hline
(0,0) & 2 & (-1,-1)$^{\oplus 3}$,  (0,-1)$\ppp$, (-1,0), (1,-1), & (0,0)$^{\oplus 3}$, (1,0)$\ppp$, (0,1), (2,0),  \\
	& 	&    (0,-2), (1,-2)$\pp$, (-2,0)$\ppp$,  (-2,-1),  & (1,-1), (2,-1)$\pp$,  (-1,1)$\ppp$,  (-1,0),   \\
	&	&   (2,-2), (-3,0)$\pp$, (-3,1), (3,-3), & (3,-1),  (-2,1)$\pp$, (-2,2), (4,-2), \\
	&     & (-4,1), (-5,1) & (-3,2), (-4,2)\\
\hline
	& 4  & (0,0), (0,-1), (-1,0), (1,-1), & (1,1), (1,0), (0,1), (2,0)\\
	&     &  (-2,0), (-2,1), (-3,1) & (-1,1), (-1,2), (-2,2)\\
\hline
\end{tabular}

\newpage
\begin{center} {\bf Alcove 4}
\end{center}

\medskip\noindent
\begin{tabular}{|c|c|c|c|}
\hline
$\s_0$ & $j$ & $\s_1$ for $\widehat{Z}_1'(1,2)$ & $\s_1$ for $\widehat{Z}_1'(8,9)$\\
\hline
(5,5) & 3 & (0,-1) & (1,0)\\
\hline
(3,5) & 2 & (0,-1), (-2,0) & (1,0), (-1,1)\\
\hline
(4,4) & 3 & (0,-1), (-2,0) & (1,0), (-1,1)\\
\hline
	& 5 & (-1,0) & (0,1)\\
\hline
(4,3) & 0 & (-1,-1) & (0,0)\\
\hline
	& 2 & (0,-1), (-2,0) & (1,0), (-1,1)\\
\hline
	& 4 & (-1,0) & (0,1)\\
\hline
(3,3)  & 1 & (-1,-1) & (0,0)\\
\hline
	& 3 & (0,-1), (-2,0) & (1,0), (-1,1)\\
\hline
	& 5 & (-1,0), (1,-1) & (0,1), (2,0)\\
\hline
(5,1) & 2 & (-1,-1) & (0,0)\\
\hline
	& 4 & (0,-1), (-1,0), (1,-1), (-2,0), (-3,1) & (1,0), (0,1), (2,0), (-1,1), (-2,2)\\
\hline
(0,4) & 2 & (0,-1), (-1,-1), (1,-2), (-2,0), & (1,0), (0,0), (2,-1), (-1,1),\\
\hline
	& 4 & (-1,0), (1,-1) & (0,1), (2,0)\\
\hline
(2,2) & 1 & (0,-1), (-1,-1)$^{\oplus 2}$, (1,-2), (-2,0), (-3,0) & (1,0), (0,0)$^{\oplus 2}$, (2,-1), (-1,1) (-2,1),\\
\hline
	& 3 & (0,-1), (-1,0)$^{\oplus 2}$, (1,-1), (-2,0), (-3,1) & (1,0), (0,1)$^{\oplus 2}$, (2,0), (-1,1) (-2,2),\\
\hline
(1,2) & 2  & (0,-1), (-1,-1)$^{\oplus 2}$, (1,-2), (-2,0), (-3,0) & (1,0), (0,0)$^{\oplus 2}$, (2,-1), (-1,1) (-2,1),\\
\hline
	& 4 &  (0,-1)$^{\oplus 2}$, (-1,0)$^{\oplus 3}$, (1,-1)$^{\oplus 2}$, & (1,0)$^{\oplus 2}$, (0,1)$^{\oplus 3}$, (2,0)$^{\oplus 2}$, \\
	&  &  (-2,0)$^{\oplus 2}$, (2,-2), (-3,1) & (-1,1)$^{\oplus 2}$, (3,-1), (-2,2),\\
\hline
	& 6 & (0,0) & (1,1)\\
\hline
(1,1) & 1  & (0,-1), (-1,-1)$^{\oplus 2}$, (0,-2),  & (1,0), (0,0)$^{\oplus 2}$, (1,-1), \\
	&  &  (1,-2), (-2,0), (-3,0) & (2,-1), (-1,1) (-2,1)\\
\hline
	& 3  &  (0,-1)$^{\oplus 2}$, (-1,0)$^{\oplus 2}$, (1,-1), & (1,0)$^{\oplus 2}$, (0,1)$^{\oplus 2}$, (2,0), \\
	&  &  (-2,0)$^{\oplus 2}$, (2,-2), (-3,1) & (-1,1)$^{\oplus 2}$, (3,-1), (-2,2),\\
\hline
	& 5  & (0,0) & (1,1)\\
\hline
(2,0) & 2 & (0,-1), (-1,-1), (0,-2),  & (1,0), (0,0), (1,-1), \\
	&  &  (1,-2), (-2,0), (-3,0) & (2,-1), (-1,1) (-2,1)\\
\hline
	& 4 &  (0,0), (0,-1)$^{\oplus 2}$, (-1,0)$^{\oplus 2}$, (1,-1), & (1,1), (1,0)$^{\oplus 2}$, (0,1)$^{\oplus 2}$, (2,0), \\
	&  &  (-2,0), (2,-2), (-3,1), (3,-2) & (-1,1), (3,-1), (-2,2), (4,-1)\\
\hline
(0,0) & 1 & (0,-1), (-1,-1), (0,-2),  (-2,-1) & (1,0), (0,0), (1,-1), (-1,0)\\
	&  &  (1,-2), (-2,0), (-3,0) & (2,-1), (-1,1) (-2,1)\\
\hline
	& 3 & (0,0), (-1,-1), (0,-1)$^{\oplus 3}$, (-1,0)$^{\oplus 3}$,    & (1,1), (0,0), (1,0)$^{\oplus 3}$, (0,1)$^{\oplus 3}$,  \\
	& 	&  (1,-1)$^{\oplus 2}$, (1,-2), (-2,0)$^{\oplus 3}$,   &  (2,0)$^{\oplus 2}$,   (2,-1), (-1,1)$^{\oplus 3}$, \\
	&      & (2,-2),  (-2,1), (3,-2),  (-3,0), & (3,-1), (-1,2), (4,-1),  (-2,1),\\
	&      & (-3,1)$^{\oplus 2}$, (-4,1),  (-5,2) &    (-2,2)$^{\oplus 2}$, (-3,2), (-4,3)\\
\hline
\end{tabular}
\end{center}

\newpage
\begin{center} {\bf Alcove 5}

\medskip\noindent
\begin{tabular}{|c|c|c|c|}
\hline
$\s_0$ & $j$ & $\s_1$ for $\widehat{Z}_1'(2,2)$ & $\s_1$ for $\widehat{Z}_1'(9,9)$\\
\hline
(5,5) & 2 & (0,-1) & (1,0)\\
\hline
(3,5) & 3 & (0,-1) & (1,0)\\
\hline
	& 5 & (-1,0) & (0,1)\\
\hline
(4,4) & 2 & (0,-1), (-2,0) & (1,0), (-1,1)\\
\hline
	& 4 & (-1,0)  & (0,1)\\
\hline
(4,3) & 3 & (0,-1), (-2,0) & (1,0), (-1,1)\\
\hline
	& 5 & (-1,0), (1,-1) & (0,1), (2,0)\\
\hline
(3,3)  & 0 & (-1,-1) & (0,0)\\
\hline
	& 2 & (0,-1), (-2,0) & (1,0), (-1,1)\\
\hline
	& 4 & (-1,0), (1,-1) & (0,1), (2,0)\\
\hline
(5,1) & 1 & (-1,-1) & (0,0)\\
\hline
	& 3 & (0,-1), (-1,0), (1,-1), (-2,0), (-3,1) & (1,0), (0,1), (2,0), (-1,1), (-2,2)\\
\hline
(0,4) & 1& (-1,-1) & (0,0)\\
\hline
	& 3 & (0,-1), (-1,0), (1,-1), (-2,0), (2,-2) & (1,0), (0,1), (2,0), (-1,1), (3,-1)\\
\hline
(2,2) & 2 & (-1,-1) & (0,0)\\
\hline
	& 4 &  (0,-1)$^{\oplus 2}$, (-1,0)$\pp$, (1,-1)$\pp$,&  (1,0)$^{\oplus 2}$, (0,1)$\pp$, (2,0)$\pp$,\\
	&	& (-2,0)$\pp$, (2,-2), (-3,1) &  (-1,1)$\pp$, (3,-1), (-2,2)\\
\hline
	& 6 & (0,0) & (1,1)\\
\hline
(1,2) & 1  & (0,-1), (-1,-1)$^{\oplus 2}$, (1,-2), (-2,0), (-3,0) & (1,0), (0,0)$^{\oplus 2}$, (2,-1), (-1,1) (-2,1),\\
\hline
	& 3 &  (0,-1)$^{\oplus 2}$, (-1,0)$^{\oplus 3}$, (1,-1)$^{\oplus 2}$, & (1,0)$^{\oplus 2}$, (0,1)$^{\oplus 3}$, (2,0)$^{\oplus 2}$, \\
	&  &  (-2,0)$^{\oplus 2}$, (2,-2), (-3,1) & (-1,1)$^{\oplus 2}$, (3,-1), (-2,2),\\
\hline
	& 5 & (0,0) & (1,1)\\
\hline
(1,1) & 2  & (0,-1), (-1,-1), (1,-2),  (-2,0), (-3,0) & (1,0), (0,0), (2,-1), (-1,1), (-2,1)\\
\hline
	& 4  & (0,0),  (0,-1)$^{\oplus 2}$, (-1,0)$^{\oplus 3}$, (1,-1)$\pp$, & (1,1), (1,0)$^{\oplus 2}$, (0,1)$^{\oplus 3}$, (2,0)$\pp$, \\
	&  &  (-2,0), (2,-2), (3,-2), (-3,1)  & (-1,1), (3,-1), (4,-1), (-2,2),\\
\hline
(2,0) & 1 & (0,-1), (-1,-1), (0,-2),  & (1,0), (0,0), (1,-1), \\
	&  &  (1,-2), (-2,0), (-3,0) & (2,-1), (-1,1) (-2,1)\\
\hline
	& 3 &  (0,0), (0,-1)$^{\oplus 2}$, (-1,0)$^{\oplus 2}$, (1,-1), & (1,1), (1,0)$^{\oplus 2}$, (0,1)$^{\oplus 2}$, (2,0), \\
	&  &  (-2,0), (2,-2), (3,-2),  (-3,1)  & (-1,1), (3,-1), (4,-1), (-2,2)\\
\hline
(0,0) & 2 & (0,-1), (-1,-1), (0,-2),   & (1,0), (0,0), (1,-1),\\
	&  &  (1,-2), (-2,0), (-3,0) & (2,-1), (-1,1) (-2,1)\\
\hline
	& 4 & (0,0)$\pp$, (-1,-1), (0,-1)$^{\oplus 3}$, (-1,0)$^{\oplus 4}$,    & (1,1)$\pp$, (0,0), (1,0)$^{\oplus 3}$, (0,1)$^{\oplus 4}$,  \\
	& 	&  (1,-1)$^{\oplus 2}$, (2,-1), (-2,0)$^{\oplus 2}$,   &  (2,0)$^{\oplus 2}$,   (3,0), (-1,1)$^{\oplus 2}$, \\
	&      & (2,-2)$\pp$,  (-2,1), (3,-2) & (3,-1)$\pp$, (-1,2), (4,-1)\\
	&      & (-3,1)$^{\oplus 2}$, (-4,1), (-4,2) &    (-2,2)$^{\oplus 2}$, (-3,2), (-3,3)\\
\hline
\end{tabular}
\end{center}

\newpage
\begin{center} {\bf Alcove 6}

\medskip\noindent
\begin{tabular}{|c|c|c|c|}
\hline
$\s_0$ & $j$ & $\s_1$ for $\widehat{Z}_1'(0,4)$ & $\s_1$ for $\widehat{Z}_1'(7,11)$\\
\hline
(5,5) & 5 & (-1,0) & (0,1)\\
\hline
(3,5) & 2 & (0,-1) & (1,0)\\
\hline
	& 4 & (-1,0) & (0,1)\\
\hline
(4,4) & 1 & (0,-1), (-2,0) & (1,0), (-1,1)\\
\hline
	& 3 & (-1,0)  & (0,1)\\
\hline
(4,3) & 2 & (0,-1), (-2,0) & (1,0), (-1,1)\\
\hline
	& 4 & (-1,0), (1,-1) & (0,1), (2,0)\\
\hline
(3,3)  & 3 & (0,-1), (-1,0), (1,-1), (-2,0), (-3,1) & (1,0), (0,1), (2,0), (-1,1), (-2,2)\\
\hline
(5,1) & 0 & (-1,-1) & (0,0)\\
\hline
	& 2 & (0,-1), (-1,0), (1,-1), (-2,0), (-3,1) & (1,0), (0,1), (2,0), (-1,1), (-2,2)\\
\hline
(0,4) & 4 & (0,-1), (-1,0), (1,-1), (-2,0), (-3,1) & (1,0), (0,1), (2,0), (-1,1), (-2,2)\\
\hline
	& 6 & (0,0) & (1,1)\\
\hline
(2,2) & 1 & (-1,-1) & (0,0)\\
\hline
	& 3 &  (0,-1)$^{\oplus 2}$, (-1,0)$\pp$, (1,-1)$\pp$,&  (1,0)$^{\oplus 2}$, (0,1)$\pp$, (2,0)$\pp$,\\
	&	& (-2,0)$\pp$, (2,-2), (-3,1) &  (-1,1)$\pp$, (3,-1), (-2,2)\\
\hline
	& 5 & (0,0) & (1,1)\\
\hline
(1,2) & 2 &  (0,-1)$^{\oplus 2}$, (-1,-1), (-1,0)$^{\oplus 2}$, (1,-1), & (1,0)$^{\oplus 2}$, (0,0), (0,1)$^{\oplus 2}$, (2,0), \\
	&  &  (-2,0)$^{\oplus 2}$, (2,-2), (-3,1), (-4,1) & (-1,1)$^{\oplus 2}$, (3,-1), (-2,2), (-3,2) \\
\hline
	& 4  & (0,0), (0,-1), (-1,0), (1,-1), & (1,1),  (1,0), (0,1), (2,0)\\
	&	& (-2,0), (-2,1), (-3,1) &  (-1,1), (-1,2), (-2,2)\\
\hline
(1,1) & 3  &  (0,-1)$\ppp$, (-1,-1), (-1,0)$\pp$, (1,-1) & (1,0)$\ppp$, (0,0), (0,1)$\pp$, (2,0)\\
	& 	& (-2,0)$\pp$, (2,-2), (-3,1), (-4,1) & (-1,1)$\pp$, (3,-1), (-2,2), (-3,2)\\
\hline
	&  5  & (0,0), (-1,0), (1,-1), (2,-1), (-2,1) & (1,1), (0,1), (2,0), (3,0), (-1,2)\\
\hline
(2,0) & 2 & (0,-1), (-2,0) & (1,0), (-1,1)\\
\hline
	& 4 & (0,0), (0,-1), (-1,-1), (-1,0)$\ppp,$ & (1,1), (1,0), (0,0), (0,1)$\ppp$\\
	&	&  (1,-1), (2,-1), (-2,0), (2,-2), & (2,0), (3,0), (-1,1), (3,-1)\\
	&	&  (-2,1), (-3,1), (-4,1), (-4,2) & (-1,2), (-2,2), (-3,2), (-3,3)\\
\hline
(0,0) & 1 & (0,-1), (-1,-1), (0,-2),   & (1,0), (0,0), (1,-1),\\
	&  &  (1,-2), (-2,0), (-3,0) & (2,-1), (-1,1) (-2,1)\\
\hline
	& 3 & (0,0)$\pp$, (-1,-1), (0,-1)$^{\oplus 3}$, (-1,0)$^{\oplus 4}$,    & (1,1)$\pp$, (0,0), (1,0)$^{\oplus 3}$, (0,1)$^{\oplus 4}$,  \\
	& 	&  (1,-1)$^{\oplus 2}$, (2,-1), (-2,0)$^{\oplus 2}$,   &  (2,0)$^{\oplus 2}$,   (3,0), (-1,1)$^{\oplus 2}$, \\
	&      & (2,-2)$\pp$,  (-2,1), (3,-2) & (3,-1)$\pp$, (-1,2), (4,-1)\\
	&      & (-3,1)$^{\oplus 2}$, (-4,1), (-4,2) &    (-2,2)$^{\oplus 2}$, (-3,2), (-3,3)\\
\hline
\end{tabular}
\end{center}

\newpage
\begin{center} 

{\bf Alcove 7}

\medskip\noindent
\begin{tabular}{|c|c|c|c|}
\hline
$\s_0$ & $j$ & $\s_1$ for $\widehat{Z}_1'(5,1)$ & $\s_1$ for $\widehat{Z}_1'(12,8)$\\
\hline
(5,5) & 1 & (0,-1) & (1,0)\\
\hline
(3,5) & 2 & (0,-1) & (1,0)\\
\hline
	& 4 & (-1,0) & (0,1)\\
\hline
(4,4) & 3 & (0,-1)  & (1,0)\\
\hline
	& 5 & (-1,0), (1,-1) & (0,1), (2,0)\\
\hline
(4,3) & 2 & (0,-1), (-2,0) & (1,0), (-1,1)\\
\hline
	& 4 & (-1,0), (1,-1) & (0,1), (2,0)\\
\hline
(3,3)  & 3 & (0,-1), (-1,0), (1,-1), (-2,0), (2,-2) & (1,0), (0,1), (2,0), (-1,1), (3,-1)\\
\hline
(5,1) & 4 & (0,-1), (-1,0), (1,-1), (-2,0), (2,-2) & (1,0), (0,1), (2,0), (-1,1), (3,-1)\\
\hline
	& 6 & (0,0) & (1,1)\\
\hline
(0,4) & 0 & (-1,-1) & (0,0)\\
\hline
	& 2  & (0,-1), (-1,0), (1,-1), (-2,0), (2,-2) & (1,0), (0,1), (2,0), (-1,1), (3,-1)\\
\hline
(2,2) & 1 & (-1,-1) & (0,0)\\
\hline
	& 3 &  (0,-1)$^{\oplus 2}$, (-1,0)$\pp$, (1,-1)$\pp$,&  (1,0)$^{\oplus 2}$, (0,1)$\pp$, (2,0)$\pp$,\\
	&	& (-2,0)$\pp$, (2,-2), (-3,1) &  (-1,1)$\pp$, (3,-1), (-2,2)\\
\hline
	& 5 & (0,0) & (1,1)\\
\hline
(1,2) & 2  & (0,-1), (-1,0), (-1,-1), (1,-1), & (1,0), (0,1), (0,0), (2,0)\\
	&	& (1,-2), (-2,0), (2,-2) & (2,-1), (-1,1), (3,-1)\\
\hline
	& 4  & (0,0),  (0,-1)$^{\oplus 2}$, (-1,0)$^{\oplus 2}$, (1,-1)$\pp$, & (1,1), (1,0)$^{\oplus 2}$,  (0,1)$^{\oplus 2}$, (2,0)$\pp$, \\
	&  &  (-2,0), (2,-2), (3,-2),  (-3,1)  & (-1,1), (3,-1), (4,-1),  (-2,2) \\	
\hline
(1,1) & 1 & (0,-1), (-1,-1), (1,-2), (-2,0), (-3,0) & (1,0), (0,0), (2,-1), (-1,1), (-2,1)\\
\hline
	& 3  & (0,0),  (0,-1)$\pp$, (-1,0)$\ppp$, (1,-1)$\pp$ & (1,1), (1,0)$\pp$,  (0,1)$\ppp$, (2,0)$\pp$\\
	& 	& (-2,0), (2,-2), (3,-2), (-3,1) & (-1,1), (3,-1), (4,-1), (-2,2)\\
\hline
(2,0) & 2 & (0,0), (0,-1)$\ppp$, (-1,-1), (-1,0) & (1,1), (1,0)$\ppp$, (0,0), (0,1)\\
	&	&  (1,-1), (-2,0), (1,-2), (2,-2), & (2,0),  (-1,1), (2,-1), (3,-1)\\
	&	&  (3,-2), (3,-3), (-3,0), (-3,1) & (4,-1), (4,-2), (-2,1), (-2,2)\\	
\hline
	 &  4  & (-1,0), (1,-1) & (0,1), (2,0)\\
\hline
(0,0) & 3 & (0,0), (-1,-1)$\pp$, (0,-1)$^{\oplus 4}$, (-1,0)$^{\oplus 3}$,    & (1,1), (0,0)$\pp$, (1,0)$^{\oplus 4}$, (0,1)$^{\oplus 3}$,  \\
	& 	&  (1,-1)$^{\oplus 2}$, (1,-2), (-2,0)$^{\oplus 2}$,   &  (2,0)$^{\oplus 2}$,   (2,-1), (-1,1)$^{\oplus 2}$, \\
	&      & (2,-2)$\pp$, (3,-2), (3,-3), & (3,-1)$\pp$, (4,-1), (4,-2),\\
	&      & (-3,0), (-3,1)$^{\oplus 2}$, (-4,1) &   (-2,1),  (-2,2)$^{\oplus 2}$, (-3,2)\\
\hline
	& 5  & (0,0), (-1,0), (1,-1) & (1,1), (0,1), (2,0)\\
	&	& (-1,1), (2,-1), (-2,1) & (0,2), (3,0), (-1,2)\\
\hline
\end{tabular}
\end{center}

\newpage
\begin{center} {\bf Alcove 8}

\medskip\noindent
\begin{tabular}{|c|c|c|c|}
\hline
$\s_0$ & $j$ & $\s_1$ for $\widehat{Z}_1'(3,3)$ & $\s_1$ for $\widehat{Z}_1'(10,10)$\\
\hline
(5,5) & 4 & (-1,0) & (0,1)\\
\hline
(3,5) & 1 & (0,-1) & (1,0)\\
\hline
	& 3 & (-1,0) & (0,1)\\
\hline
(4,4) & 2 & (0,-1)  & (1,0)\\
\hline
	& 4 & (-1,0), (1,-1) & (0,1), (2,0)\\
\hline
(4,3) & 1 & (0,-1), (-2,0) & (1,0), (-1,1)\\
\hline
	& 3 & (-1,0), (1,-1) & (0,1), (2,0)\\
\hline
(3,3)  & 2 & (0,-1), (-2,0) & (1,0), (-1,1)\\
\hline
	& 4 & (-1,0), (1,-1) & (0,1), (2,0)\\
\hline
	& 6 & (0,0) & (1,1)\\
\hline
(5,1) & 3 & (0,-1), (-1,0), (1,-1), (-2,0), (2,-2) & (1,0), (0,1), (2,0), (-1,1), (3,-1)\\
\hline
	& 5 & (0,0) & (1,1)\\
\hline
(0,4) & 3 & (0,-1), (-1,0), (1,-1), (-2,0), (-3,1) & (1,0), (0,1), (2,0), (-1,1), (-2,2)\\
\hline
	& 5 & (0,0) & (1,1)\\
\hline
(2,2) & 0 & (-1,-1) & (0,0)\\
\hline
	& 2 &  (0,-1)$^{\oplus 2}$, (-1,0)$\pp$, (1,-1)$\pp$,&  (1,0)$^{\oplus 2}$, (0,1)$\pp$, (2,0)$\pp$,\\
	&	& (-2,0)$\pp$, (2,-2), (-3,1) &  (-1,1)$\pp$, (3,-1), (-2,2)\\
\hline
	& 4 & (0,0) & (1,1)\\
\hline
(1,2) & 1 & (-1,-1) & (0,0)\\
\hline
	& 3  & (0,-1)$^{\oplus 3}$, (-1,0)$^{\oplus 2}$, (1,-1)$\pp$, & (1,0)$^{\oplus 3}$,  (0,1)$^{\oplus 2}$, (2,0)$\pp$, \\
	&  &  (-2,0)$\pp$, (2,-2),  (-3,1)  & (-1,1)$\pp$, (3,-1),  (-2,2) \\	
\hline
	& 5 & (0,0)$\pp$, (-1,0), (1,-1), (2,-1), (-2,1) & (1,1)$\pp$, (0,1), (2,0), (3,0), (-1,2)\\
\hline
(1,1) & 2  & (0,-1)$\ppp$, (-1,-1), (-1,0)$\pp$, (1,-1)  & (1,0)$\ppp$, (0,0), (0,1)$\pp$, (2,0)\\
	& 	& (-2,0)$\pp$, (2,-2),  (-3,1), (-4,1) & (-1,1)$\pp$, (3,-1), (-2,2), (-3,2)\\
\hline
	& 4  & (0,0), (-1,0), (1,-1), (2,-1), (-2,1) & (1,1), (0,1), (2,0), (3,0), (-1,2)\\
\hline
(2,0) & 3 & (0,-1)$\pp$, (-1,-1), (-1,0)$\pp$, (1,-1) & (1,0)$\pp$, (0,0), (0,1)$\pp$, (2,0)\\
	&	&  (-2,0), (2,-2), (-3,1), (-4,1) & (-1,1), (3,-1), (-2,2), (-3,2)\\
\hline
	 &  5  & (0,0), (-1,0), (1,-1), (-1,1) & (1,1),  (0,1), (2,0), (0,2)\\
	 &	& (2,-1), (-2,1) & (3,0), (-1,2)\\
\hline
(0,0) & 2 & (0,0), (-1,-1)$\pp$, (0,-1)$^{\oplus 4}$, (-1,0)$^{\oplus 3}$,    & (1,1), (0,0)$\pp$, (1,0)$^{\oplus 4}$, (0,1)$^{\oplus 3}$,  \\
	& 	&  (1,-1)$^{\oplus 2}$, (1,-2), (-2,0)$^{\oplus 2}$,   &  (2,0)$^{\oplus 2}$,   (2,-1), (-1,1)$^{\oplus 2}$, \\
	&      & (2,-2)$\pp$, (3,-2), (3,-3), & (3,-1)$\pp$, (4,-1), (4,-2),\\
	&      & (-3,0), (-3,1)$^{\oplus 2}$, (-4,1) &   (-2,1),  (-2,2)$^{\oplus 2}$, (-3,2)\\
\hline
	& 4  & (0,0), (-1,0), (1,-1), & (1,1), (0,1), (2,0)\\
	&	& (-1,1), (2,-1), (-2,1) & (0,2), (3,0), (-1,2)\\
\hline
\end{tabular}
\end{center}

\newpage
\begin{center} {\bf Alcove 11}

\medskip\noindent
\begin{tabular}{|c|c|c|c|}
\hline
$\s_0$ & $j$ & $\s_1$ for $\widehat{Z}_1'(4,3)$ & $\s_1$ for $\widehat{Z}_1'(11,10)$\\
\hline
(5,5) & 3 & (-1,0) & (0,1)\\
\hline
(3,5) & 4 &  (-1,0), (1,-1)  & (0,1), (2,0)\\
\hline
(4,4) & 1 & (0,-1)  & (1,0)\\
\hline
	& 3 & (-1,0), (1,-1) & (0,1), (2,0)\\
\hline
(4,3) & 2 & (0,-1)  & (1,0)\\
\hline
	& 4  &  (-1,0), (1,-1) & (0,1), (2,0)\\
\hline
	& 6  & (0,0)  & (1,1)\\
\hline
(3,3)  & 1 & (0,-1), (-2,0) & (1,0), (-1,1)\\
\hline
	& 3 & (-1,0), (1,-1) & (0,1), (2,0)\\
\hline
	& 5 & (0,0) & (1,1)\\
\hline
(5,1) & 2 & (0,-1), (-1,0), (1,-1),  (-2,0), (2,-2) & (1,0), (0,1), (2,0), (-1,1), (3,-1)\\
\hline
	& 4 & (0,0)  & (1,1)\\
\hline
(0,4) & 2 & (0,-1), (-2,0) & (1,0), (-1,1)\\
\hline
	& 4 & (0,0), (-1,0), (1,-1), (-2,1)  & (1,1), (0,1), (2,0), (-1,2)\\
\hline
(2,2) &  3 &  (0,-1)$\pp$, (-1,0), (1,-1), (-2,0), (2,-2)  & (1,0)$\pp$, (0,1), (2,0), (-1,1), (3,-1)\\
\hline
	& 5 & (0,0)$\pp$, (-1,0), (1,-1), (2,-1), (-2,1)  & (1,1)$\pp$, (0,1), (2,0), (3,0), (-1,2)\\
\hline
(1,2) & 0 & (-1,-1) & (0,0)\\
\hline
	& 2  & (0,-1)$^{\oplus 3}$, (-1,0)$^{\oplus 2}$, (1,-1)$\pp$, & (1,0)$^{\oplus 3}$,  (0,1)$^{\oplus 2}$, (2,0)$\pp$, \\
	&  &  (-2,0)$\pp$, (2,-2),  (-3,1)  & (-1,1)$\pp$, (3,-1),  (-2,2) \\	
\hline
	& 4 & (0,0)$\pp$, (-1,0), (1,-1), (2,-1), (-2,1) & (1,1)$\pp$, (0,1), (2,0), (3,0), (-1,2)\\
\hline
(1,1) &  1  & (-1,-1)  &  (0,0)\\
\hline
	& 3  & (0,-1)$\pp$,  (-1,0)$\pp$, (1,-1)$\pp$  & (1,0)$\pp$,  (0,1)$\pp$, (2,0)$\pp$\\
	& 	& (-2,0), (2,-2),  (-3,1)  & (-1,1), (3,-1), (-2,2)\\
\hline
	& 5  & (0,0)$\pp$, (-1,0), (1,-1), (-1,1),  & (1,1)$\pp$, (0,1), (2,0), (0,2)\\
	&    &  (2,-1), (-2,1) &  (3,0), (-1,2)\\
\hline
(2,0) & 2 & (0,-1)$\pp$, (-1,-1), (-1,0)$\pp$, (1,-1) & (1,0)$\pp$, (0,0), (0,1)$\pp$, (2,0)\\
	&	&  (-2,0), (2,-2), (-3,1), (-4,1) & (-1,1), (3,-1), (-2,2), (-3,2)\\
\hline
	 &  4  & (0,0), (-1,0), (1,-1), (-1,1) & (1,1),  (0,1), (2,0), (0,2)\\
	 &	& (2,-1), (-2,1) & (3,0), (-1,2)\\
\hline
(0,0) & 3 & (0,0), (-1,-1), (0,-1)$^{\oplus 3}$, (-1,0)$^{\oplus 3}$,    & (1,1), (0,0), (1,0)$^{\oplus 3}$, (0,1)$^{\oplus 3}$,  \\
	& 	&  (1,-1)$^{\oplus 3}$, (2,-1),  (1,-2), (-2,0)$^{\oplus 2}$,   &  (2,0)$^{\oplus 3}$,  (3,0),  (2,-1), (-1,1)$^{\oplus 2}$, \\
	&      & (2,-2)$\pp$, (-2,1),  (3,-2), & (3,-1)$\pp$, (-1,2), (4,-1), \\
	&      & (4,-3), (-3,1), (-4,1) &   (5,-2),  (-2,2), (-3,2)\\
\hline
	& 5  & (1,0),  (0,0), (-1,0), (1,-1), & (2,1),  (1,1), (0,1), (2,0)\\
	&	& (-1,1), (2,-1), (-2,1) & (0,2), (3,0), (-1,2)\\
\hline
\end{tabular}
\end{center}

\newpage
\begin{center} {\bf Alcove 13}

\medskip\noindent
\begin{tabular}{|c|c|c|c|}
\hline
$\s_0$ & $j$ & $\s_1$ for $\widehat{Z}_1'(4,4)$ & $\s_1$ for $\widehat{Z}_1'(11,11)$\\
\hline
(5,5) & 2 & (-1,0) & (0,1)\\
\hline
(3,5) & 3 &  (-1,0), (1,-1)  & (0,1), (2,0)\\
\hline
(4,4) & 4 &  (-1,0), (1,-1) & (0,1), (2,0)\\
\hline
	& 6 & (0,0) & (1,1)\\
\hline
(4,3) & 1 & (0,-1)  & (1,0)\\
\hline
	& 3  &  (-1,0), (1,-1) & (0,1), (2,0)\\
\hline
	& 5  & (0,0)  & (1,1)\\
\hline
(3,3)  & 2 & (0,-1) & (1,0)\\
\hline
	& 4 & (0,0),  (-1,0), (1,-1), (-2,1)  &  (1,1), (0,1), (2,0), (-1,2)\\
\hline
(5,1) & 3 & (0,-1)  & (1,0)\\
\hline
	& 5 &  (0,0), (-1,0), (1,-1), (2,-1), (-2,1)  & (1,1),  (0,1), (2,0), (3,0), (-1,2)\\
\hline
(0,4) & 1 & (0,-1), (-2,0) & (1,0), (-1,1)\\
\hline
	& 3 & (0,0), (-1,0), (1,-1), (-2,1)  & (1,1), (0,1), (2,0), (-1,2)\\
\hline
(2,2) &  2 &  (0,-1)$\pp$, (-1,0), (1,-1), (-2,0), (2,-2)  & (1,0)$\pp$, (0,1), (2,0), (-1,1), (3,-1)\\
\hline
	& 4 & (0,0)$\pp$, (-1,0), (1,-1), (2,-1), (-2,1)  & (1,1)$\pp$, (0,1), (2,0), (3,0), (-1,2)\\
\hline
(1,2) & 3  & (0,0), (0,-1)$^{\oplus 2}$, (-1,0)$^{\oplus 2}$, (1,-1)$\pp$, & (1,1), (1,0)$^{\oplus 2}$,  (0,1)$^{\oplus 2}$, (2,0)$\pp$, \\
	&  &  (-2,0), (-2,1),  (2,-2),  (-3,1)  & (-1,1), (-1,2),  (3,-1),  (-2,2) \\	
\hline
	& 5 & (0,0)$\pp$, (-1,0), (1,-1), (-1,1),  &  (1,1)$\pp$, (0,1),  (2,0), (0,2)\\
	&    & (2,-1), (-2,1) &  (3,0), (-1,2)\\
\hline
(1,1) &  0  & (-1,-1)  &  (0,0)\\
\hline
	& 2  & (0,-1)$\pp$,  (-1,0)$\pp$, (1,-1)$\pp$  & (1,0)$\pp$,  (0,1)$\pp$, (2,0)$\pp$\\
	& 	& (-2,0), (2,-2),  (-3,1)  & (-1,1), (3,-1), (-2,2)\\
\hline
	& 4  & (0,0)$\pp$, (-1,0), (1,-1), (-1,1),  & (1,1)$\pp$, (0,1), (2,0), (0,2)\\
	&    &  (2,-1), (-2,1) &  (3,0), (-1,2)\\
\hline
(2,0) & 1 & (-1,-1) & (0,0)\\
\hline
	& 3 & (0,0), (0,-1), (-1,0)$\ppp$, (1,-1)$\pp$,   & (1,1), (1,0), (0,1)$\ppp$, (2,0)$\pp$, \\
	&   & (-1,1), (2,-1), (-2,0), (2,-2),	&  (0,2), (3,0),  (-1,1), (3,-1),\\
	&	&  (-2,1),  (-3,1), (-4,2) & (-1,2), (-2,2), (-3,3)\\
\hline
	 &  5  & (0,0) & (1,1)\\
\hline
(0,0) &  2 & (-1,-1), (0,-1), (-1,0), (1,-1), & (0,0), (1,0), (0,1), (2,0)\\
	&	& (1,-2), (2,-2), (-2,0) & (2,-1), (3,-1), (-1,1)\\
\hline
	&  4 & (1,0), (0,0)$\ppp$, (-1,1),  (0,-1),     & (2,1), (1,1)$\ppp$, (0,2),  (1,0),   \\
	& 	&  (-1,0)$^{\oplus 3}$, (1,-1)$^{\oplus 3}$, (2,-1)$\pp$,  (-2,0),   &  (0,1)$^{\oplus 3}$, (2,0)$^{\oplus 3}$,  (3,0)$\pp$,   (-1,1), \\
	&      & (2,-2), (-2,1)$\pp$, (3,-2), & (3,-1), (-1,2)$\pp$, (4,-1), \\
	&      & (4,-2), (-3,1), (-4,2) &   (5,-1),  (-2,2), (-3,3)\\
\hline
\end{tabular}
\end{center}

\newpage
\begin{center} {\bf Alcove 15}

\medskip\noindent
\begin{tabular}{|c|c|c|c|}
\hline
$\s_0$ & $j$ & $\s_1$ for $\widehat{Z}_1'(3,5)$ & $\s_1$ for $\widehat{Z}_1'(10,12)$\\
\hline
(5,5) & 1 & (-1,0) & (0,1)\\
\hline
(3,5) & 2 &  (-1,0)  & (0,1) \\
\hline
	& 6 & (0,0)  & (1,1)\\
\hline
(4,4) & 3 &  (-1,0), (1,-1) & (0,1), (2,0)\\
\hline
	& 5 & (0,0) & (1,1)\\
\hline
(4,3) & 4  & (0,0),  (-1,0), (1,-1), (-2,1) & (1,1), (0,1), (2,0), (-1,2)\\
\hline
(3,3)  & 1 & (0,-1) & (1,0)\\
\hline
	& 3 & (0,0),  (-1,0), (1,-1), (-2,1)  &  (1,1), (0,1), (2,0), (-1,2)\\
\hline
(5,1) & 2 & (0,-1)  & (1,0)\\
\hline
	& 4 &  (0,0), (-1,0), (1,-1), (2,-1), (-2,1)  & (1,1),  (0,1), (2,0), (3,0), (-1,2)\\
\hline
(0,4) &  2 & (0,0), (0,-1), (-1,0), (1,-1), & (1,1), (1,0), (0,1), (2,0),\\
	& 	& (-2,1), (-3,1)  &  (-1,2), (-2,2)\\
\hline
(2,2) &  3 &  (0,0), (0,-1), (-1,0), (1,-1),   & (1,1), (1,0), (0,1), (2,0), \\
	&	& (-2,1), (-3,1) & (-1,2), (-2,2)\\
\hline
	& 5 & (0,0), (-1,0), (1,-1), (-1,1),  & (1,1), (0,1), (2,0), (0,2)\\
	&	& (2,-1), (-2,1)  &  (3,0), (-1,2)\\
\hline
(1,2) & 2  & (0,0), (0,-1)$^{\oplus 2}$, (-1,0)$^{\oplus 2}$, (1,-1)$\pp$, & (1,1), (1,0)$^{\oplus 2}$,  (0,1)$^{\oplus 2}$, (2,0)$\pp$, \\
	&  &  (-2,0), (-2,1),  (2,-2),  (-3,1)  & (-1,1), (-1,2),  (3,-1),  (-2,2) \\	
\hline
	& 4 & (0,0)$\pp$, (-1,0), (1,-1), (-1,1),  &  (1,1)$\pp$, (0,1),  (2,0), (0,2)\\
	&    & (2,-1), (-2,1) &  (3,0), (-1,2)\\
\hline
(1,1) & 3 &  (0,0)$\pp$, (0,-1), (-1,0)$\ppp$, (1,-1)$\pp$, & 	(1,1)$\pp$, (1,0), (0,1)$\ppp$, (2,0)$\pp$	\\
	&	& (-1,1), (2,-1), (-2,0), (2,-2),  &	(0,2), (3,0), (-1,1), (3,-1)	\\
	&	& (-2,1)$\pp$, (-3,1), (-4,2) & 	(-1,2)$\pp$, (-2,2), (-3,3)	\\
\hline
	& 5 & (0,0) & (1,1)\\
\hline
(2,0) & 0 & (-1,-1) & (0,0)\\
\hline
	& 2 & (0,0), (0,-1), (-1,0)$\ppp$, (1,-1)$\pp$,   & (1,1), (1,0), (0,1)$\ppp$, (2,0)$\pp$, \\
	&   & (-1,1), (2,-1), (-2,0), (2,-2),	&  (0,2), (3,0),  (-1,1), (3,-1),\\
	&	&  (-2,1),  (-3,1), (-4,2) & (-1,2), (-2,2), (-3,3)\\
\hline
	 &  4  & (0,0) & (1,1)\\
\hline
(0,0) & 1 & (-1,-1)  & (0,0)\\
\hline
	&  3 & (0,0), (0,-1), (-1,0)$\ppp$, (1,-1)$\pp$,  &  (1,1), (1,0), (0,1)$\ppp$, (2,0)$\pp$, \\
	&	& (-1,1), (2,-1), (2,-2), (-2,0) & (0,2), (3,0), (3,-1), (-1,1)\\
	&	& (-2,1), (-3,1), (-4,2) & (-1,2), (-2,2), (-3,3)\\
\hline
	&  5 & (1,0), (0,0)$\ppp$, (-1,1),  (0,-1),     & (2,1), (1,1)$\ppp$, (0,2),  (1,0),   \\
	& 	&  (-1,0), (1,-1), (2,-1), (3,-1)  &  (0,1), (2,0),  (3,0),  (4,0)  \\
	&      & (-2,1), (3,-2),  (-3,1), (-3,2) & (-1,2), (4,-1), (-2,2), (-2,3)\\
\hline
\end{tabular}
\end{center}

\newpage
\begin{center} {\bf Alcove 16}

\medskip\noindent
\begin{tabular}{|c|c|c|c|}
\hline
$\s_0$ & $j$ & $\s_1$ for $\widehat{Z}_1'(5,5)$ & $\s_1$ for $\widehat{Z}_1'(12,12)$\\
\hline
(5,5) & 6 & (0,0) & (1,1)\\
\hline
(3,5) & 1 &  (-1,0)  & (0,1) \\
\hline
	& 5 & (0,0)  & (1,1)\\
\hline
(4,4) & 2 &  (-1,0), (1,-1) & (0,1), (2,0)\\
\hline
	& 4 & (0,0) & (1,1)\\
\hline
(4,3) & 3  & (0,0),  (-1,0), (1,-1), (-2,1) & (1,1), (0,1), (2,0), (-1,2)\\
\hline
(3,3)  & 4 & (0,0),  (-1,0), (1,-1), (2,-1), (-2,1) & (1,1), (0,1), (2,0), (3,0), (-1,2)\\
\hline
(5,1) & 1 & (0,-1)  & (1,0)\\
\hline
	& 3 &  (0,0), (-1,0), (1,-1), (2,-1), (-2,1)  & (1,1),  (0,1), (2,0), (3,0), (-1,2)\\
\hline
(0,4) &  5 & (0,0), (-1,0), (1,-1), (-1,1) & (1,1), (0,1), (2,0), (0,2)\\
	&	& (2,-1), (-2,1) & (3,0), (-1,2) \\
\hline
(2,2) &  2 &  (0,0), (0,-1), (-1,0), (1,-1),   & (1,1), (1,0), (0,1), (2,0), \\
	&	& (-2,1), (-3,1) & (-1,2), (-2,2)\\
\hline
	& 4 & (0,0), (-1,0), (1,-1), (-1,1),  & (1,1), (0,1), (2,0), (0,2)\\
	&	& (2,-1), (-2,1)  &  (3,0), (-1,2)\\
\hline
(1,2) & 3  & (0,0)$\pp$, (0,-1), (-1,0)$^{\oplus 2}$, (1,-1)$\pp$, & (1,1)$\pp$, (1,0),  (0,1)$^{\oplus 2}$, (2,0)$\pp$, \\
	&  &  (2,-1), (-2,1),  (3,-2),  (-3,1)  & (3,0), (-1,2),  (4,-1),  (-2,2) \\	
\hline
	& 5 &  (1,0), (0,0), (-1,0), (1,-1),   &  (2,1), (1,1), (0,1),  (2,0), \\
	&    &  (-1,1), (2,-1), (-2,1) & (0,2), (3,0), (-1,2)\\
\hline
(1,1) & 2  &  (-1,0) &  (0,1)\\
\hline
	& 4 &  (1,0),  (0,0)$\ppp$, (0,-1), (-1,0)$\pp$,  &  (2,1), (1,1)$\ppp$, (1,0), (0,1)$\pp$, 	\\
	&	& (1,-1)$\pp$, (-1,1), (2,-1),  (3,-2),  &	 (2,0)$\pp$, (0,2), (3,0), (4,-1)	\\
	&	& (-2,1)$\pp$, (-3,1), (-3,2) & 	(-1,2)$\pp$, (-2,2), (-2,3)	\\
\hline
(2,0) & 3 & (-1,0), (1,-1)  &  (0,1), (2,0) \\
\hline
	& 5 & (1,0), (0,0)$\ppp$, (0,-1), (-1,0),   & (2,1), (1,1)$\ppp$, (1,0), (0,1),  \\
	&   &  (1,-1),  (-1,1), (2,-1),  (3,-1),	& (2,0),  (0,2), (3,0), (4,0), \\
	&	&   (3,-2), (-2,1),  (-3,1), (-3,2) &  (4,-1), (-1,2), (-2,2), (-2,3)\\
\hline
(0,0) & 0 & (-1,-1)  & (0,0)\\
\hline
	&  2 & (0,0), (0,-1), (-1,0)$\ppp$, (1,-1)$\pp$,  &  (1,1), (1,0), (0,1)$\ppp$, (2,0)$\pp$, \\
	&	& (-1,1), (2,-1), (2,-2), (-2,0) & (0,2), (3,0), (3,-1), (-1,1)\\
	&	& (-2,1), (-3,1), (-4,2) & (-1,2), (-2,2), (-3,3)\\
\hline
	&  4 & (1,0), (0,0)$\ppp$, (-1,1),  (0,-1),     & (2,1), (1,1)$\ppp$, (0,2),  (1,0),   \\
	& 	&  (-1,0), (1,-1), (2,-1), (3,-1),  &  (0,1), (2,0),  (3,0),  (4,0),  \\
	&      & (-2,1), (3,-2),  (-3,1), (-3,2) & (-1,2), (4,-1), (-2,2), (-2,3)\\
\hline
\end{tabular}
\end{center}
\newpage 

\providecommand{\bysame}{\leavevmode\hbox
to3em{\hrulefill}\thinspace}

\end{document}